\newcommand{\address}[1]{\gdef\@address{#1}}
\newcommand{\email}[1]{\gdef\@email{\url{#1}}}
\newcommand{\@endstuff}{\par\vspace{\baselineskip}\noindent\small
\begin{tabular}{@{}l}\scshape\@address\\\textit{E-mail address:} \@email\end{tabular}}
\author{Damien Junger\footnote{This work has been written in a great part during the author PhD thesis at the ENS Lyon. His work are currently funded by the Deutsche Forschungsgemeinschaft (DFG, German Research Foundation) under Germany's Excellence Strategy EXC 2044–390685587, Mathematics Münster: Dynamics–Geometry–Structure.}}
\address{Mathematisches Institut, Universität Münster,\\ Fachbereich Mathematik und Informatik der Universität Münster,  Orléans-Ring 10, 48149 Münster, Germany.}
\email{djunger@uni-muenster.de}
\title{Cohomologie de de Rham du revêtement modéré de l'espace de Drinfeld}
\newtheorem{theointro}{Th\'eor\`eme}
\newtheorem{theo}{Th\'eor\`eme}[section]
\newtheorem{lem}[theo]{Lemme}
\newtheorem{coro}[theo]{Corollaire}
\newtheorem{prop}[theo]{Proposition}
\theoremstyle{definition}
\newtheorem{defi}[theo]{D\'efinition}
\theoremstyle{remark}
\newtheorem{rem}[theo]{Remarque}
\DeclareMathOperator{\cind}{c-ind}
\DeclareMathOperator{\ind}{ind}
\DeclareMathOperator{\spf}{Spf}
\DeclareMathOperator{\spg}{Sp}
\DeclareMathOperator{\mat}{M}
\DeclareMathOperator{\gln}{GL}
\DeclareMathOperator{\pgln}{PGL}
\DeclareMathOperator{\dl}{DL}
\DeclareMathOperator{\stab}{Stab}
\DeclareMathOperator{\fix}{Fix}
\DeclareMathOperator{\hhh}{H}
\DeclareMathOperator{\rrr}{R}
\DeclareMathOperator{\homm}{Hom}
\DeclareMathOperator{\ost}{Ost}
\DeclareMathOperator{\fst}{Fst}
\DeclareMathOperator{\id}{Id}
\DeclareMathOperator{\nr}{Nr}
\DeclareMathOperator{\lie}{Lie}
\DeclareMathOperator{\jl}{JL}
 \newcommand{\iso}{\stackrel{\sim}{\fl}}
\font\tengoth=eufb10
\font\sevengoth=eufb7
\font\fivegoth=eufb5
\def\A{{\mathbb{A}}}
\def\B{{\mathbb{B}}}
\def\C{{\mathbb{C}}}
\def\F{{\mathbb{F}}}
\def\H{{\mathbb{H}}}
\def\N{{\mathbb{N}}}
\def\P{{\mathbb{P}}}
\def\Q{{\mathbb{Q}}}
\def\Z{{\mathbb{Z}}}
\def\BC{{\mathcal{B}}}
\def\HC{{\mathcal{H}}}
\def\IC{{\mathcal{I}}}
\def\GC{{\mathcal{G}}}
\def\MC{{\mathcal{M}}}
\def\OC{{\mathcal{O}}}
\def\TC{{\mathcal{T}}}
\def\XC{{\mathcal{X}}}
\def\SG{{\mathfrak{S}}}
\def\XG{{\mathfrak{X}}}
\def\mG{{\mathfrak{m}}}
\def\Lf{{\mathscr{L}}}
\def\Of{{\mathscr{O}}}
\def\bar#1{\overline{#1}}
\def\het#1{{\rm H}^{#1}_{\rm ét}}
\def\hetc#1{{\rm H}^{#1}_{{\rm ét},c}}
\def\hdr#1{{\rm H}^{#1}_{\rm dR}}
\def\hdrc#1{{\rm H}^{#1}_{{\rm dR},c}}
\def\hrig#1{{\rm H}^{#1}_{\rm rig}}
\def\hrigc#1{{\rm H}^{#1}_{{\rm rig},c}}
\def\et{\text{ et }}
\def\and{\text{ and }}
\def\fl{\rightarrow}
\def\fln#1#2{\xrightarrow[#2]{#1}}
\def\limp{\varprojlim}
\def\som#1#2#3{\sum\limits_{{\substack{#2}}}^{#3}{#1}}
\def\pro#1#2#3{\prod\limits_{{\substack{#2}}}^{#3}{#1}}
\def\inter#1#2#3{\bigcap\limits_{{\substack{#2}}}^{#3}{#1}}
\def\uni#1#2#3{\bigcup\limits_{{\substack{#2}}}^{#3}{#1}}
\def\drt#1#2#3{\bigoplus\limits_{{\substack{#2}}}^{#3}{#1}}
\begin{document}
\maketitle

\begin{abstract}
In this article, we study the De Rham cohomology of the first cover in the Drinfel'd tower. In particular, we get a purely local proof that the supercuspidal part realizes the local Jacquet-Langlands correspondence for ${\rm GL}_n$ by comparing it to the rigid cohomology of some Deligne-Lusztig varieties. The representations obtained are analogous to the ones appearing in the $\ell$-adic cohomology if we forget the action of the Weyl group. The proof relies on the generalization of an excision result of Grosse-Kl\"onne and on the explicit description of the first cover as a cyclic cover obtained by the author on a previous work.
\end{abstract}

\tableofcontents

\section{Introduction} 


  Soit $d\geq 1$ un entier et $p$ un nombre premier, nous étudierons dans cet article la cohomologie de De Rahm de $\H_K^d$ l'espace de Drinfeld de dimension 
  $d$ sur $K$ une extension finie de $\Q_p$. C'est un ouvert de l'espace projective rigide, sur lequel agit naturellement $G=\gln_{d+1}(K)$,  tel que 
  $$\H_K^d(C)=\P_K^d(C) \backslash \bigcup_{H \in \HC} H,$$ o\`u $\HC$ est l'ensemble des hyperplans $K$-rationnels de $\P_K^d$ et $C=\widehat{\bar{K}}$ le complété d'une clôture alg\'ebrique de $K$.
  
  Dans un article monumental \cite{dr2}, Drinfeld a construit 
  une tour de revêtements finis étales $G$-équivariants 
$(\MC_{Dr}^n)_{n\geq 0}$ de l'espace $\MC_{Dr}^0:= \H_{\breve{K}}^d \times \Z$ (avec $\breve{K}= \widehat{K^{nr}}$ le compl\'eté de l'extension maximale non ramifi\'ee dans $\bar{K}$ et $\H_{\breve{K}}^d=\H_K^d\otimes_K \breve{K}$), de groupe de Galois 
$\OC_D^*$ avec $D$ l'algèbre à division sur 
  $K$ de dimension $(d+1)^2$ et d'invariant $1/(d+1)$. 
La cohomologie \'etale $l$-adique pour $l \neq p$ de cette tour fournit une r\'ealisation g\'eom\'etrique de la correspondance de Langlands et de Jacquet-Langlands locales, cf. \cite{Harr}, \cite{Boy}, \cite{harrtay}. Pour $l=p$, 
$d=1$ et $K=\Q_p$, il est montré dans \cite{GPW1} que la cohomologie 
étale $p$-adique de ces espaces encode aussi la correspondance de Langlands 
locale $p$-adique pour ${\rm GL}_2(\Q_p)$.

Dans ce travail, nous nous int\'eressons \`a la cohomologie de De Rham de la tour de 
Drinfeld, guidés par le principe informel suivant: les résultats $l$-adiques 
établis dans \cite{Harr}, \cite{Boy}, \cite{harrtay} doivent avoir des analogues en cohomologie de de Rham, obtenus en oubliant simplement l'action du groupe de Weil 
$W_K$ de $K$ et en changeant les coefficients. L'action de $W_K$ sur les groupes de cohomologie $l$-adique est très intéressante, mais elle devient invisible sur les groupes de cohomologie de de Rham, qui encodent uniquement les liens entre les représentations de $G$ et de $D^*$. Ce genre de résultat a été démontré pour $
d=1$ dans \cite{brasdospi} (pour $K=\Q_p$) et dans \cite{GPW1} (pour $K$ quelconque), pour toute la tour de Drinfeld. Notre résultat principal 
est une preuve (purement locale) de ce principe quand $n=1$ et $d$ est quelconque, plus précisément pour la "partie supercuspidale" de la cohomologie. 
Il s'agit d'un analogue en cohomologie de de Rham du résultat $l$-adique démontré par voie locale par Wang \cite{wa}. Nous utilisons de mani\`ere cruciale les résultats g\'eom\'etriques concernant $\MC_{Dr}^1$ obtenus dans loc.cit (la situation est nettement plus compliquée pour $\MC_{Dr}^n$ quand $n>1$, et il est peu probable qu'une approche purement locale puisse résoudre ce problème).

   Pour énoncer notre résultat principal, nous avons besoin de quelques préliminaires. 
   Le groupe de Galois du revêtement $\MC_{Dr}^1\to \MC_{Dr}^0$
 est $\F_{q^{d+1}}^*$ (avec $\F_q$ le corps résiduel de $K$), un groupe cyclique d'ordre premier à $p$ (ce qui jouera un rôle fondamental par la suite). Soit $\theta: \F_{q^{d+1}}^*\to C^*$ un caractère 
 \emph{primitif} de ce groupe, i.e. qui ne se factorise pas par la norme $\F_{q^{d+1}}^*\to \F_{q^{e}}^*$ pour tout diviseur $e$ de $d+1$, différent de $d+1$. On peut associer à $\theta$ les objets suivants:
 
 $\bullet$ une représentation de Deligne-Lusztig (ou de Green) $\bar{\pi}_{\theta}$ du groupe ${\rm GL}_{d+1}(\F_q)$. 
 
 $\bullet$ une représentation de $D^*$
 $$\rho(\theta):=\ind_{\OC_D^* \varpi^{\Z}}^{D^*} \theta.$$
 
 $\bullet$ une représentation de $G$ 
 $$\jl(\rho(\theta)):=\cind_{\gln_{d+1}(\OC_K)\varpi^{\Z}}^G \bar{\pi}_{\theta}.$$
 La notation est bien entendu inspirée par la correspondance de Jacquet-Langlands pour les représentations supercuspidales de niveau $0$ et de caract\`ere central trivial sur $\varpi^{\Z}$.

\begin{theointro}
\label{theointroprincbis}
  Pour tout caractère primitif $\theta: \F_{q^{d+1}}^*\to C^*$ il existe des isomorphismes de $G$-représentations 
 
$$\homm_{D^*}(\rho(\theta), \hdrc{i}(\MC_{Dr, C}^1/ \varpi^{\Z})){\cong} \begin{cases} \jl(\rho(\theta))^{d+1} &\text{ si } i=d \\ 0 &\text{ sinon} \end{cases}.$$
\end{theointro}   


 
   Expliquons les difficultés qu'il faut surmonter pour prouver ce théorème. La principale   est l'absence d'un modèle semi-stable (ou semi-stable généralisé, cf. ci-dessous) de l'espace $\MC_{Dr}^1$, alors que l'on dispose d'un tel modèle (construit par Deligne) 
   $\widehat{\MC}_{Dr}^0$ pour $\MC_{Dr}^0$ (du côté Lubin-Tate, la situation est meilleure: Yoshida a construit \cite{yosh} un mod\`ele semi-stable g\'en\'eralis\'e du premier revêtement de l'espace de Lubin-Tate et étudié la géométrie de sa fibre spéciale). Le schéma formel $\widehat{\MC}_{Dr}^0$ possède une interprétation modulaire; grâce à un théorème fondamental de Drinfeld \cite{dr2}, ce modèle classifie des déformations par quasi-isogénie d'un 
   $\OC_D$-module formel spécial de hauteur $(d+1)^2$ au sens de Drinfeld. Soit $\XG$ le $\OC_D$-module formel spécial universel sur $\widehat{\MC}_{Dr}^0$, si 
   $\Pi_D$ est une uniformisante de $\OC_D$, alors $\XG[\Pi_D]$ est un sch\'ema formel en $\F_p$-espaces vectoriels de Raynaud dont on connait une classification \cite{Rayn}. De plus, $\widehat{\MC}_{Dr}^1$ est  l'espace obtenu en analytifiant 
   $\XG[\Pi_D]\setminus \{0\}$. 
   
   En utilisant les observations ci-dessus, on peut \'etudier certains ouverts de l'espace $\MC_{Dr}^1$, qui admettent un mod\`ele lisse dont la fibre sp\'eciale est isomorphe \`a une vari\'et\'e de Deligne-Lusztig (cette observation cruciale a été faite en premier par Teitelbaum \cite{teit2} quand $d=1$ et a été généralisée par Wang \cite{wa}). La difficulté est alors de montrer que l'étude de ces ouverts suffit à comprendre la cohomologie de l'espace tout entier. En cohomologie $l$-adique, cela se fait par une étude délicate des cycles \'evanescents pour relier des questions sur $\MC_{Dr}^1$ \`a des questions sur la fibre sp\'eciale et plus particuli\`erement, sur la vari\'et\'e de Deligne-Lusztig. Ces m\'ethodes sont propres \`a la cohomologie $l$-adique et pour surmonter cet obstacle, nous avons besoin du point technique suivant. 
   Soit $\XC$ un schéma formel sur $\OC_K$, localement de type fini. On dit que $\XC$ est de réduction semi-stable généralisée si Zariski-localement sur $\XC$ on peut trouver un morphisme étale vers ${\rm Spf}(\OC_K\langle X_1,...,X_n\rangle/
   (X_1^{\alpha_1}...X_r^{\alpha_r}-\varpi)$ pour certains $r\leq n$ et $\alpha_i\geq 1$ (ou $\varpi$ est une uniformisante de $K$). Si l'on peut choisir les $\alpha_i$ égaux à $1$, on parle de réduction semi-stable. 


\begin{theointro}
\label{theointrolieulissebis}
Soit $\XC$ un schéma formel de réduction semi-stable généralisée, $\XC_s=\uni{Y_i}{i\in I}{}$ la décomposition en composantes irréductibles de la fibre spéciale et $\pi: \TC\rightarrow \XC_\eta$ un rev\^etement \'etale en fibre g\'en\'erique de groupe de Galois $\mu_n$ avec $n\wedge p=1$. 
Si $\TC=\XC_\eta$ ou bien $\XC$ est de réduction semi-stable et $\TC$ est quelconque, 
alors pour toute partie finie $J$ de $I$ on a un isomorphisme naturel \[\hdr{*} (\pi^{-1}(]Y_J[_\XC))\fln{\sim}{} \hdr{*} (\pi^{-1}(]Y_{J}\backslash \bigcup_{i\notin J}Y_i[_\XC))\]
où $ Y_J=\inter{Y_j}{j\in J}{}$.
\end{theointro}

Le cas $n=1$ et $\XC$ semi-stable (non g\'en\'eralis\'e) est d\^u \`a Grosse-Kl\"onne. On a un analogue en cohomologie \'etale $l$-adique en termes de cycles \'evanescents quand $\XC$ est alg\'ebrisable et $| J | =n=1$ d\'emontr\'e par Zheng \cite{zhe} (voir aussi \cite{dat2}). Nous aurons besoin uniquement du cas 
où $\XC$ est semi-stable dans cet article (par contre il est indispensable de travailler avec $J$ et $\TC$ quelconques), mais le cas semi-stable généralisé devrait être utile pour l'étude du premier revêtement de l'espace de Lubin-Tate. 


Au vu de la description de la g\'eom\'etrie de  $\MC_{Dr}^1$ et du th\'eor\`eme \ref{theointrolieulissebis}, nous pouvons \'etablir :

\begin{theointro}\label{theointrohdltdrdlbis}
Soit $\theta$ un caract\`ere primitif de $\F_{q^{d+1}}^*$ et notons $\BC\TC_0$ l'ensemble des sommets de l'immeuble de Bruhat-Tits semi-simple de $G$. On a des isomorphismes 
\[\hdrc{i}(\Sigma^1)[\theta] \cong \bigoplus_{s \in \BC\TC_0} \hrigc{i}(\dl_{\F_q}/ \breve{K})[\theta]\]
o\`u $\dl_{\F_q}$ est la vari\'et\'e de Deligne-Lusztig introduite dans \ref{sssectiondldef}. 
\end{theointro}

Le th\'eor\`eme principal \ref{theointroprincbis} d\'ecoulera alors de la th\'eorie de Deligne-Lusztig que nous rappelons dans la partie \ref{sssectioncohodl}.

Toutefois, pour \'etudier les intersections quelconques de tubes au dessus de composantes $\pi^{-1}(] Y_J [)$, nous aurons besoin de fa\c{c}on essentielle de la description globale du torseur $\Sigma^1 \to \H^d_K$ r\'ealis\'e dans \cite{J2} que l'on restreindra \`a $\pi^{-1}(] Y_J [) \to ] Y_J [$. 


\subsection*{Remerciements}

Le présent travail a été, avec \cite{J1,J2,J3}, en grande partie réalisé durant ma thèse à l'ENS de lyon,  et a pu bénéficier des nombreux conseils et de l'accompagnement constant de mes deux maîtres de thèse Vincent Pilloni et Gabriel Dospinescu. Je les en remercie très chaleureusement.  La preuve du résultat \ref{theodrcouronne} doit beaucoup aux conseils de Laurent Fargues ( pour m'avoir suggéré de voir le complexe de De Rham sur le revêtement comme une somme directe de complexe sur la couronne suivant les parties isotypiques) et de Jean-Claude Sikorav (pour l'explication du cas analogue sur le corps des complexe).

\section{Préliminaires}

\subsection{Conventions générales\label{paragraphconv}}
 On fixe dans tout l'article un nombre premier $p$ et une extension finie $K$ de $\Q_p$, dont on note $\mathcal{O}_K$ l'anneau des entiers, $\varpi$ une uniformisante et $\F=\F_q$ le corps r\'esiduel. On note $C=\hat{\bar{K}}$ le complété d'une clôture algébrique de $K$ et $\breve{K}$ le complété de l'extension maximale non ramifiée de $K$. Soit $L\subset C$ une extension complète de $K$,  susceptible de varier, d'anneau des entiers $\mathcal{O}_L$, d'idéal maximal  $\mG_L$ et de corps r\'esiduel $\kappa$. 

Si $S$ est un $L$-espace analytique, on note $\A^n_{ S}$ (resp. $\P_{ S}^n$) l'espace analytique affine (resp. projectif) sur $S$, de dimension relative $n$. 
  Les espaces $\mathring{\B}^n_S$ et $\B^n_S$ seront les boules unitées ouverte et fermée. 
    



Si $X$ est un $L$-espace analytique et si $F\subset \mathcal{O}(X)$ est une famille finie de fonctions analytiques sur $X$ et $g$ une autre fonction analytique, on note $$X\left(\frac{F}{g}\right)=\left\lbrace x\in X|\,\forall f\in F, |f(x)|\leq |g(x)|\right\rbrace, \,\, X\left(\frac{g}{F}\right)=\left\lbrace x\in X|\,\forall f\in F, |g(x)|\leq |f(x)|\right\rbrace.$$ De m\^eme, $X\left(\left(\frac{F}{g}\right)^{\pm 1}\right)$ ou $X\left(\frac{F}{g},\frac{g}{F}\right)$ d\'esignera $\left\lbrace x\in X|\, \forall f\in F, |f(x)|= |g(x)|\right\rbrace$. Pour $s\in |C^*|$ on note $X\left(\frac{F}{s}\right)=\left\lbrace x\in X|\, \forall f\in F, |f(x)|\leq s\right\rbrace$ (idem pour $X\left(\frac{s}{F}\right)$). 

Les \'el\'ements de la base canonique de $\Z^n$ seront not\'es $(\delta_i)_i$. Si $x=(x_1,\cdots,x_n)$, $\alpha\in \Z^n$ et $A\in \mat_{k,n} (\Z)$, nous adopterons la notation multi-indice, i.e. $x^\alpha=\pro{x_i^{\alpha_i}}{i=1}{n}$ et $x^A=(x^{A\delta_i})_{1\leq i\leq k}$. 


\subsection{Cohomologie étale et torseurs\label{paragraphcohosite}}

Soit $n$ premier à $p$, on identifie $\het{1}(X, \mu_n)$ et l'ensemble des classes d'isomorphisme de $\mu_n$-torseurs sur $X$ (i.e. des revêtements galoisiens $\pi: \TC \to X$ de groupe de Galois $\mu_n$). On note 
$ [\TC]$ la classe d'isomorphisme du torseur $\TC$, vue comme un élément de $\het{1}(X, \mu_n)$. 
Si $\TC_1,\TC_2$ sont des $\mu_n$-torseurs sur $X$, alors $\TC_1 \times_X \TC_2 \to X$ est un rev\^etement de groupe de Galois $\mu_n^2$ et en notant $H\subset \mu_n^2$ l'antidiagonal, le quotient $\TC_3=\TC_1\times_X \TC_2 /H$ est un revêtement de $X$ de groupe de Galois $H\cong\mu_n$ et $[\TC_1]+[\TC_2]= [\TC_3]$. Il est à remarquer que $\TC_1\times_X \TC_2$ est encore un revêtement de $\TC_3$ de groupe de Galois $\mu_n$. 

Le morphisme de Kummer  sera noté $\kappa : \Of^* (X)\to  \het{1}(X,\mu_n)$. 
Le torseur $\kappa(u)$ associé à  une fonction inversible $u$ sur $X$ sera noté $\pi : X(u^{1/N}) \to X$. Si $U\subset X$ est un ouvert affinoïde, alors \[\Of_{X(u^{1/N})} (\pi^{-1}(U))\simeq \Of_X (U) [t]/(t^N -u).\]




\subsection{Cohomologie de De Rham et torseurs \label{paragraphcohoderahm}}
Si $X$ est un affinoïde sur $L$, on notera $X^\dagger$ l’espace surconvergent associé. Si $X^\dagger$ est un espace surconvergent lisse, d'espace rigide analytique sous-jacent $X$, on note $\hdr{*}(X)$ et $\hdr{*}(X^\dagger)$ les hypercohomologies des complexes de de Rham $\Omega_{X/L}^\bullet$ et  $\Omega_{X^\dagger/L}^\bullet$. Par \cite[Proposition 2.5]{GK1}, le théorème $B$ de Kiehl \cite[Satz 2.4.2]{kie} et la suite spectrale de Hodge-de Rham, ces cohomologies sont calculées directement à  partir du complexe de de Rham correspondant, quand $X$ est Stein\footnote{En cohomologie de de Rham (non surconvergente), l'hypothèse $X$ quasi-Stein  suffit.}. Les deux cohomologies coïncident si $X$ est partiellement propre (par exemple Stein). 





 Soit $n$ premier à $p$ et $\pi :\TC \to X$ un $\mu_n$-torseur d'un espace analytique lisse $X$. On a une décomposition $\pi_* \Of_{\TC} = \drt{\Lf_{\chi}}{\chi \in \mu_n^{\lor}}{}$ où $\Lf_\chi$ est l'espace propre associé au caractère $\chi$ pour l'action de $\mu_n$ (c'est un faisceau localement libre de rang 1). Le morphisme $\pi$ étant étale, on a $$\pi_* \Omega_{\TC /L}^q=\Omega_{X/L}^q\otimes \pi_* \Of_{\TC}=\drt{\Omega_{X/L}^q\otimes\Lf_{\chi}}{\chi \in \mu_n^{\lor}}{}$$ et
  de même pour le complexe surconvergent. Les différentielles des complexes de de Rham étant $\mu_n$-équivariantes, elles respectent ces décompositions en somme directe donnant lieu pour chaque $\chi$ à des complexes de cochaînes $\Omega_{X/L}^\bullet\otimes\Lf_{\chi}=:\Omega_{\TC/L}^\bullet[\chi]$ et $\Omega_{X^\dagger/L}^\bullet\otimes\Lf_{\chi}=:\Omega_{\TC^\dagger/L}^\bullet[\chi]$ dont les cohomologies seront notées $\hdr{*}(\TC)[\chi]$ et $\hdr{*}(\TC^\dagger)[\chi]$. C'est la partie isotypique associée à  $\chi$. En particulier, \[\hdr{*}(X)=\hdr{*}(\TC)^{\mu_n}=\hdr{*}(\TC)[1]\et \hdr{*}(X^\dagger)=\hdr{*}(\TC^\dagger)^{\mu_n}=\hdr{*}(\TC^\dagger)[1].\] 

 Soit $\TC_1\to X$, $\TC_2\to Y$ deux $\mu_n$-torseurs sur des $L$-espaces lisses $X$ et $Y$. On obtient $\TC_1\times_L Y$, $X\times_L \TC_2$ deux revêtements sur $X\times_L Y$ et on construit (voir plus haut pour la définition de $H$) \[\TC_3=((\TC_1\times_L Y)\times_{X\times Y} (X\times_L \TC_2)) /H=(\TC_1\times_L \TC_2)/H \] 
 C'est un revêtement de $X\times_L Y$ dont la classe s'identifie à $[\TC_1\times_L Y]+[X\times \TC_2]$. Comme $\TC_1\times_L \TC_2$ est un revêtement de $\TC_3$ étale de groupe de Galois $H$, on a 
\begin{prop}

 On a des isomorphismes naturels 

\[\hdr{q}(\TC_3)=\hdr{q}(\TC_1\times_L \TC_2)^H=\drt{\drt{\hdr{q_1} (\TC_1)[ \chi]\otimes \hdr{q_2} (\TC_2)[ \chi] }{\chi\in \mu_n^\lor }{}}{ q_1 +q_2=q}{} ,\] 
\[ \hdr{q}(\TC_3)[\chi]=\drt{\hdr{q_1} (\TC_1)[ \chi]\otimes \hdr{q_2} (\TC_2)[ \chi] }{ q_1 +q_2=q}{} \]
idem pour la cohomologie de de Rham surconvergente. 
 
 \end{prop}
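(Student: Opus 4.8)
## Proof strategy for the Proposition

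The plan is to reduce the statement about the composite torseur $\TC_3$ to a Künneth-type computation for de Rham cohomology of a product of Stein (or quasi-Stein) spaces, combined with the isotypic decomposition already set up in Section~\ref{paragraphcohoderahm}. First I would record that $\TC_1\times_L\TC_2\to\TC_3$ is finite étale Galois with group $H\cong\mu_n$ acting without fixed points; since $|H|=n$ is prime to $p$, the functor of $H$-invariants is exact on the category of $L$-vector spaces, so $\hdr{q}(\TC_3)=\hdr{q}(\TC_1\times_L\TC_2)^H$. This already gives the first equality provided one knows the $H$-equivariance is compatible with the grading to come; so the real content is the identification of $\hdr{q}(\TC_1\times_L\TC_2)$.

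The key step is a Künneth formula. I would use that $\TC_1$ and $\TC_2$, being finite étale over the smooth Stein (quasi-Stein) spaces $X$ and $Y$, are themselves smooth quasi-Stein, and hence so is $\TC_1\times_L\TC_2$; therefore by the remark at the end of Section~\ref{paragraphcohoderahm} (Kiehl's Theorem~$B$ and the Hodge--de Rham spectral sequence) all the relevant cohomologies are computed by the global de Rham complexes, which are complexes of Fréchet spaces with the global sections of $\Omega^\bullet_{\TC_i/L}$ nuclear. The de Rham complex of the product is the (completed) tensor product of the two de Rham complexes, and the Künneth spectral sequence degenerates because we are over a field: this yields
\[
\hdr{q}(\TC_1\times_L\TC_2)\;\cong\;\bigoplus_{q_1+q_2=q}\hdr{q_1}(\TC_1)\otimes_L\hdr{q_2}(\TC_2).
\]
Now I decompose each factor under its $\mu_n$-action, $\hdr{q_i}(\TC_i)=\bigoplus_{\chi}\hdr{q_i}(\TC_i)[\chi]$, and track the residual $H$-action. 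Writing $\mu_n\times\mu_n$ for the Galois group of $\TC_1\times_L\TC_2\to X\times_L Y$, a class in $\hdr{q_1}(\TC_1)[\chi_1]\otimes\hdr{q_2}(\TC_2)[\chi_2]$ transforms under $(\zeta_1,\zeta_2)$ by $\chi_1(\zeta_1)\chi_2(\zeta_2)$; restricting to the antidiagonal $H=\{(\zeta,\zeta^{-1})\}$ this acts by $\chi_1(\zeta)\chi_2(\zeta^{-1})=(\chi_1\chi_2^{-1})(\zeta)$. Hence the $H$-invariants pick out exactly the terms with $\chi_1=\chi_2$, and more generally the $\chi$-isotypic part of $\hdr{q}(\TC_3)$ (for the residual $\mu_n\cong(\mu_n\times\mu_n)/H$ action, with a suitable identification of characters) is $\bigoplus_{q_1+q_2=q}\hdr{q_1}(\TC_1)[\chi]\otimes\hdr{q_2}(\TC_2)[\chi]$, which is the second displayed formula. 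The surconvergent case is identical, replacing $\Omega^\bullet_{X/L}$ by $\Omega^\bullet_{X^\dagger/L}$ throughout and using that for partially proper $X$ the two theories agree anyway.

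The main obstacle is the Künneth formula itself in the rigid-analytic Stein setting: one must ensure the completed tensor product of the two global de Rham complexes actually computes the de Rham cohomology of the product, and that the spectral sequence degenerates with no $\mathrm{Tor}$ or lim$^1$ corrections. This is where the nuclearity/flatness of the Fréchet spaces $\Gamma(\TC_i,\Omega^\bullet_{\TC_i/L})$ and the strictness of the differentials (finite-dimensionality of cohomology, or at least closedness of images) enter; I would either invoke an existing Künneth statement for Stein spaces in the literature or prove it by hand using that over a field every short exact sequence of the relevant topological vector spaces splits and completed tensor product is exact on nuclear Fréchet spaces. Once Künneth is in hand, the equivariance bookkeeping is routine and the Proposition follows.
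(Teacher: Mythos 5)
Votre démonstration est correcte et suit essentiellement la même voie que l'article, qui présente cette proposition comme conséquence directe de la décomposition isotypique $\pi_*\Of_{\TC}=\bigoplus_\chi \Lf_\chi$ mise en place juste avant, de la formule de Künneth et de l'exactitude des invariants sous $H$ (le cardinal de $H$ étant inversible). Le point de Künneth que vous signalez comme principal obstacle est bien le seul ingrédient non formel, et dans les applications de l'article les espaces en jeu sont des affinoïdes surconvergents (tores monômiaux) à cohomologie explicite et de dimension finie, où cette formule est standard.
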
 
 
 \subsection{Cohomologie rigide}

La cohomologie rigide d'un schéma algébrique $Y$ sur le corps résiduel $\kappa$ de $L$ sera notée $\hrig{*} (Y/L)$. On rappelle la dualité de Poincaré:

\begin{prop} \label{theodualitepoinc}
\begin{enumerate}
\item Si $X$ est un $L$-affinoïde lisse, pur de dimension $d$, alors \[\hdr{i} (X^\dagger)\cong \hdrc{2d-i} (X^\dagger)^\lor \et \hdrc{i} (X^\dagger)\cong \hdr{2d-i} (X^\dagger)^\lor.\]

\item   Si $X$ est un $L$-espace lisse et Stein, pur de dimension $d$,  alors \[\hdr{i} (X)\cong \hdrc{2d-i} (X)^\lor \et \hdrc{i} (X)\cong \hdr{2d-i} (X)^\lor.\]
\item Si $Y$ est un schéma lisse sur $\kappa$, pur de dimension $d$, alors  \[\hrig{i} (Y/L)\cong \hrigc{2d-i} (Y/L)^\lor \et \hrigc{i} (Y/L)\cong \hrig{2d-i} (Y/L)^\lor.\]
\end{enumerate}

\end{prop}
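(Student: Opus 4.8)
Ces trois assertions sont des formes de la dualité de Poincaré, et on les traiterait par le même mécanisme : construire un accouplement donné par le cup-produit et une application trace, puis montrer que cet accouplement est parfait par un dévissage jusqu'à des briques de base explicites. L'assertion (3) est la dualité de Poincaré de Berthelot pour la cohomologie rigide des schémas lisses sur $\kappa$ (avec sa variante à support compact), que l'on citerait directement ; les assertions (1) et (2) sont dues à Grosse-Klönne. Voici le plan que l'on suivrait pour (1) et (2), l'assertion (3) s'obtenant en parallèle en transportant l'argument au complexe de de Rham sur les voisinages stricts d'un tube.

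Pour $X$ un affinoïde surconvergent lisse sur $L$, pur de dimension $d$, on construit d'abord une trace $\mathrm{tr}\colon\hdrc{2d}(X^\dagger)\to L$ sur chaque composante connexe, via le formalisme des résidus et la dualité de Serre pour les différentielles surconvergentes de degré maximal, puis le cup-produit $\hdr{i}(X^\dagger)\otimes_L\hdrc{2d-i}(X^\dagger)\to\hdrc{2d}(X^\dagger)$ ; la composition fournit l'accouplement dont on veut montrer qu'il est parfait. Comme tous ces groupes sont de dimension finie sur $L$ par le théorème de finitude de Grosse-Klönne, et comme $\hdr{*}(X^\dagger)$ se calcule à partir du complexe de de Rham $\Omega_{X^\dagger/L}^\bullet$, il suffit d'identifier fonctoriellement ce complexe au $L$-dual, décalé de $2d$, du complexe calculant la cohomologie à support compact. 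Le dévissage procède ainsi : (i) on vérifie l'énoncé lorsque $X$ est un polydisque fermé (ou ouvert, après passage à l'intérieur), où les deux cohomologies se calculent à la main et où l'accouplement trace est visiblement parfait ; (ii) si $X=U\cup V$ et que la dualité est connue pour $U$, $V$ et $U\cap V$, on assemble la suite de Mayer-Vietoris pour $\hdr{*}$ et la suite exacte d'excision pour $\hdrc{*}$ en un morphisme de suites exactes longues, isomorphisme sur les termes relatifs à $U$, $V$ et $U\cap V$, donc sur le terme relatif à $X$ par le lemme des cinq (après avoir vérifié que cup-produits et traces sont compatibles aux morphismes de connexion) ; (iii) on recouvre un affinoïde surconvergent lisse quelconque par un nombre fini de pièces étales au-dessus de polydisques et l'on raisonne par récurrence sur le nombre de pièces, en utilisant la forme relative (de Gysin) de la dualité le long des morphismes étales, ainsi que (i) et (ii).

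Pour (2), on écrit un $L$-espace Stein lisse $X$, pur de dimension $d$, comme réunion croissante d'affinoïdes surconvergents lisses $X_1\subset X_2\subset\cdots$, choisis de sorte que les flèches naturelles réalisent $\hdr{*}(X)$ comme $\limp_n\hdr{*}(X_n^\dagger)$ et $\hdrc{*}(X)$ comme $\limi_n\hdrc{*}(X_n^\dagger)$, les morphismes de transition du côté à support compact étant donnés par l'extension par zéro. La finitude de chaque $\hdr{*}(X_n^\dagger)$ fait disparaître toute obstruction $\limp^1$ et rend la $L$-dualité topologique sur $X$ compatible à l'échange de $\limp$ et $\limi$ ; en appliquant (1) à chaque $X_n^\dagger$ puis en passant à la limite --- la naturalité du cup-produit et de la trace assurant la compatibilité des accouplements aux morphismes de transition --- on obtient (2). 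Pour (3), le même dévissage fonctionne une fois choisi, localement puis globalement sur $Y$, un cadre $Y\hookrightarrow\bar Y\hookrightarrow\PC$ avec $\PC$ un schéma formel lisse sur $\OC_L$, une fois $\hrig{*}(Y/L)$ calculée à partir du complexe de de Rham sur les voisinages stricts du tube $]Y[_{\PC}$, et une fois la situation ramenée à $Y=\A^d_\kappa$ --- où le tube est un polydisque ouvert et la dualité explicite --- via Mayer-Vietoris et les suites de Gysin, la finitude de la cohomologie rigide légitimant de nouveau l'argument d'accouplement.

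Le principal obstacle est la gestion de la cohomologie à support compact dans le dévissage : il faut mettre en place les suites exactes longues de Mayer-Vietoris et de Gysin à support compact, et vérifier que, via le cup-produit suivi de la trace, elles sont exactement les $L$-duaux (au décalage de $2d$ près) des suites usuelles, de sorte que le lemme des cinq s'applique à chaque étape de récurrence. C'est la compatibilité aux morphismes de connexion et le contrôle des signes qui constituent le vrai travail ; le calcul sur le polydisque et le passage à la limite sont comparativement routiniers.
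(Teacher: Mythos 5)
Le texte ne démontre pas cette proposition : sa « preuve » est un simple renvoi à \cite[prop.\ 4.9]{GK1} et \cite[prop.\ 4.11]{GK1} pour les points (1) et (2), et à \cite[th.\ 2.4]{bert1} pour le point (3). Vous identifiez exactement les mêmes sources (Grosse-Klönne pour la dualité de de Rham surconvergente, Berthelot pour la dualité rigide), donc votre proposition est compatible avec ce que fait l'article ; la différence est que vous allez plus loin en esquissant la mécanique interne de ces références. Votre schéma général --- accouplement cup-produit suivi d'une trace, finitude des groupes, puis dévissage --- est bien le cadre standard, et votre traitement du cas Stein (exhaustion par des affinoïdes surconvergents, $\hdr{*}(X)=\limp_n\hdr{*}(X_n^\dagger)$, $\hdrc{*}(X)=\limi_n\hdrc{*}(X_n^\dagger)$, annulation du $\limp^1$ par finitude) correspond fidèlement à la réduction de (2) à (1) chez Grosse-Klönne. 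Un bémol sur l'étape affinoïde : dans la preuve effective de \cite[prop.\ 4.9]{GK1}, la perfection de l'accouplement ne s'obtient pas par une récurrence de type Mayer--Vietoris jusqu'aux polydisques, mais repose sur la dualité de Serre cohérente pour les espaces surconvergents (appliquée terme à terme au complexe de de Rham, puis assemblée via les suites spectrales d'hypercohomologie) combinée aux théorèmes de finitude ; votre dévissage par recouvrements et morphismes de Gysin est plus proche de la stratégie de Berthelot pour le point (3). L'un et l'autre chemins aboutissent, mais la compatibilité des accouplements aux morphismes de connexion que vous signalez comme le « vrai travail » est précisément ce que l'argument par dualité cohérente permet de contourner dans le cas affinoïde.
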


\begin{proof} Voir \cite[proposition 4.9]{GK1} pour le premier point, \cite[proposition 4.11]{GK1} pour le second et 
\cite[théorème 2.4]{bert1} pour le dernier. 
\end{proof}

 Le théorème de comparaison suivant nous sera très utile: 
 
\begin{theo}\label{theopurete}
   
Soit $\XC$ un schéma formel lisse $\spf (\OC_L)$, de fibre spéciale $\XC_s$ et de fibre générique $\XC_\eta$,  on a un isomorphisme naturel \[\hdr{*}(\XC_\eta^\dagger)\cong \hrig{*}(\XC_s).\]

\end{theo}

\begin{proof} Il s'agit de
\cite[proposition 3.6]{GK3}.
\end{proof}

\section{Rappels sur la géométrie de l'espace de Drinfeld}

 Nous rappelons quelques résultats standards concernant la géométrie de l'espace symétrique de Drinfeld et nous renvoyons à  (\cite[section 1]{boca}, \cite[sous-sections I.1. et II.6.]{ds}, \cite[sous-section 3.1.]{dat1}, \cite[sous-sections 2.1. et 2.2]{wa}) pour plus de détails.  
On fixe une extension finie $K$ de $\Q_p$, une uniformisante $\varpi$ de $K$ et un entier $d\geq 1$. On note $\F=\F_q$ le corps résiduel de $K$ et $G={\rm GL}_{d+1}(K)$. 

\subsection{L'immeuble de Bruhat-Tits} \label{paragraphbtgeosimpstd}

\label{paragraphbtsimp}
Notons $\BC\TC$ l'immeuble de Bruhat-Tits associé au groupe $\pgln_{d+1}(K)$. Le $0$-squelette $\BC\TC_0$ de l'immeuble est l'ensemble des réseaux de $K^{d+1}$ à  homothétie près, i.e. $\BC\TC_0$ s'identifie à  $G/K^*\gln_{d+1}(\OC_K)$. Un $(k+1)$-uplet de sommets 
$\sigma=\{s_0,\cdots, s_k\}\subset \BC\TC_{0} $ est un $k$-simplexe de $\BC\TC_k$ si et seulement si, quitte à  permuter les sommets $s_i$, on peut trouver pour tout $i$ des réseaux $M_i$   avec $s_i=[M_i]$ tels que \[M_{-1}=\varpi M_k\subsetneq M_0\subsetneq M_1\subsetneq\cdots\subsetneq M_k.\] 



  En posant 
 $$\bar{M}_i=M_i/ M_{-1},$$
 on obtient un drapeau $0\subsetneq\bar{M}_0\subsetneq \bar{M}_1\subsetneq\cdots\subsetneq \bar{M}_k\cong \F^{d+1}$.  On note 
 $d_i={\rm dim}_{\F} (\bar{M}_i)-1$ et $e_i=d_{i+1}-d_{i}.$
  Nous dirons que le simplexe $\sigma$ est de type $(e_0, e_1,\cdots, e_k)$.
  
  Considérons une base $(\bar{f}_0,\cdots, \bar{f}_d)$ adaptée  au drapeau i.e. telle que $\bar{M}_i=\left\langle \bar{f}_0,\cdots , \bar{f}_{d_i}\right\rangle$ pour tout $i$. Pour tout choix de relevés $(f_0,\cdots ,f_d)$ de $(\bar{f}_0,\cdots ,\bar{f}_d)$ dans $M_0$, on a
  $$M_i=\left\langle f_0,\cdots, f_{d_i }, \varpi f_{d_i+1},\cdots, \varpi f_d \right\rangle=N_0\oplus \cdots \oplus N_i\oplus \varpi (N_{i+1} \oplus \cdots\oplus N_k),$$
  où 
  $$N_i=\left\langle f_{d_{i-1}+1},\cdots, f_{d_{i}} \right\rangle$$ avec $d_{-1}=-1$. Si $(f_0,\cdots ,f_d)$ est la base canonique de $K^{d+1}$, nous dirons que $ \sigma $ est le simplexe standard de type $(e_0, e_1,\cdots, e_k)$.


La réalisation topologique de l'immeuble sera notée $|\BC\TC|$. Nous confondrons les simplexes avec leur réalisation topologique de telle manière que $|\BC\TC|=\uni{\sigma}{\sigma\in \BC\TC}{}$. Les différents $k$-simplexes, vus comme des compacts de la réalisation topologique, seront appelés faces. L'intérieur d'une face $\sigma$ sera noté $\mathring{\sigma}=\sigma \backslash \uni{\sigma'}{\sigma'\subsetneq\sigma}{}$ et sera appelé cellule.







\subsection{L'espace des hyperplans $K$-rationnels} 

   On note $\HC$ l'ensemble des hyperplans $K$-rationnels dans $\P^d$. 
Si $a=(a_0,\dots, a_d)\in C^{d+1}\backslash \{0\}$, $l_a$ désignera l'application \[b=(b_0,\dots, b_d)\in C^{d+1} \mapsto \left\langle a,b\right\rangle := \som{a_i b_i}{0\leq i\leq d}{}.\] Ainsi $\HC$ s'identifie à  $\{\ker (l_a),\; a\in K^{d+1}\backslash \{0\} \}$ et à  $\P^d (K)$.
   
   Le vecteur $a=(a_i)_{i}\in C^{d+1}$ est dit unimodulaire si $|a|_{\infty}(:=\max (|a_i|))=1$. L'application 
   $a\mapsto H_a:=\ker (l_a)$ induit une bijection entre le quotient de l'ensemble des vecteurs unimodulaires 
   $a\in K^{d+1}$ par l'action évidente de $\OC_K^*$ et l'ensemble $\mathcal{H}$. 
   
   Pour $a\in K^{d+1}$ unimodulaire et $n\geq 1$, 
   on considère l'application $l_a^{(n)} $ \[b\in (\OC_{C}/\varpi^n)^{d+1}\mapsto \left\langle a,b\right\rangle\in \OC_{C}/\varpi^n\] et on note $$\HC_n =\{\ker (l_a^{(n)}),\; a\in K^{d+1}\backslash \{0\} \; {\rm unimodulaire} \}\simeq\P^d(\OC_K/\varpi^n).$$ Alors $\HC = \varprojlim_n \HC_n$ et chaque 
   $\HC_n$ est fini.




\subsection{Géométrie de l'espace symétrique de Drinfeld}
\label{paragraphhdkrectein}

Nous allons maintenant décrire l'espace symétrique de Drinfeld $\H_K^d$. Il s'agit de l'espace  analytique sur $K$ dont les $C$-points sont \[\H_K^d (C)=\P^d (C)\backslash \uni{H}{H\in \HC}{}.\]




     On dispose d'une application $G$-équivariante \[\tau : \H^d_K(C)\to \{\text{normes sur } K^{d+1}\}/\{\text{homothéties}\} \] donnée par $$\tau (z): v\mapsto |\som{z_i v_i}{i=0}{d}|$$ si $z=[z_0,\cdots, z_d] \in \H_K^d (C)$. L'image $\tau(z)$ ne dépend pas du représentant de $z$ car les normes sont vues à  homothétie près. Le fait de prendre le complémentaire des hyperplans $K$-rationnels assure que $\tau(z)$ est séparée et donc une norme sur $K^{d+1}$. D’après un résultat classique de Iwahori-Goldmann \cite{iwgo}, l'espace des normes sur $K^{d+1}$ à  homothétie près s'identifie bijectivement (et de manière $G$-équivariante) à  l'espace topologique $|\BC\TC|$, ce qui permet de voir $\tau$ comme une application 
$$\tau: \H_K^d(C)\to |\BC\TC|.$$ 

  Nous renvoyons à \cite[§I.4]{boca} quand $d=1$ ou \cite[§2.1]{wa} pour la justification des faits suivants. 
Soit $\sigma\in\BC\TC_{k}$ un simplexe de type $(e_0, e_1,\cdots, e_k)$ et posons 
 $$\H_{K,\sigma}^d:=\tau^{-1} (\sigma), \,\, \H_{K,\mathring{\sigma}}^d:=\tau^{-1} (\mathring{\sigma}).$$
 L'ouvert $\H_{K,\sigma}^d$ est un affinoïde, admettant un modèle entier $\H_{\OC_K,\sigma}^d =\spf  (\hat{A}_\sigma)$ où $\hat{A}_\sigma$ est le complété $p$-adique de 
$\OC_K [X_0,\cdots, X_d,\frac{1}{P_\sigma}]/(\pro{X_{d_{i}}}{i=0}{k}-\varpi),$
pour un certain polynôme $P_{\sigma}\in \OC_K[X_0,...,X_d]$ qui est décrit dans \cite[§2.1]{wa} ou dans \cite[§1.5]{J3}. Nous ne servirons ici que du cas où $\sigma=s$ est un sommet. L'algèbre $\hat{A}_s$ est alors le complété $p$-adique de \begin{equation}\label{eq:hatas}
\OC_K [X_0,\cdots, X_{d-1},\frac{1}{\prod_{a=(a_i)_i\in \F^{d+1}\backslash \{0\}} (\tilde{a}_0 X_0+\cdots \tilde{a}_{d-1}X_{d-1}+ \tilde{a}_d)}]
\end{equation}
avec $\tilde{a}=(\tilde{a}_i)_i $ un relevé de $a$ dans $\OC_K$.
 Pour $\H_{K,\mathring{\sigma}}^d$, il s'agit de l'ouvert \[\{z\in\P^d_K : \forall 0\le i\le k, \forall a,b\in M_i\backslash M_{i-1},\forall c\in M_{i-1},|\left\langle c,z\right\rangle|<|\left\langle a,z\right\rangle|=|\left\langle b,z\right\rangle|\]
avec $M_{-1}=\varpi M_k$. Considérons les ouverts
\[C_r= \{x=(x_1,\cdots, x_r)\in \B^r_K|\, \forall a\in \OC^{r+1}_K\backslash \varpi \OC^{r+1}_K, \  1=|\left\langle (1, x),a \right\rangle| \},\]
\[A_k=\{y=(y_0,\cdots, y_{k-1})\in \B^k_K|\,  1>|y_{k-1}|>\cdots>|y_0|>|\varpi|\}\]
et les morphismes 
$$ \H_{K,\mathring{\sigma}}^d \rightarrow C_{e_i -1}, \,\, 
  [z_0,\cdots, z_d]  \mapsto  (\frac{ z_{d_{i}+1} }{ z_{d_{i}} },\frac{ z_{d_{i}+2} }{ z_{d_{i}} },\cdots, \frac{ z_{d_{i+1}-1} }{ z_{d_{i}} })\et $$
 $$\H_{K,\mathring{\sigma}}^d \rightarrow A_k,\,\, 
  [z_0,\cdots, z_d]  \mapsto  (\frac{ z_{d_{0}} }{ z_{d} },\frac{ z_{d_{1}} }{ z_{d} },\cdots, \frac{ z_{d_{k-1}} }{ z_{d} }).$$
 Il est montré dans \cite[6.4]{ds}  que   
    les morphismes ci-dessus induisent un isomorphisme
\[ \H_{K,\mathring{\sigma}}^d \cong A_k \times \pro{C_{e_i-1}}{i=0}{k}\cong A_k \times C_\sigma.\]

\begin{rem}\label{remniinvci}

Nous avons introduit précédemment une décomposition en somme adapté au simplexe :
\[M_k =N_0\oplus \cdots\oplus N_k\]
avec $N_i=\left\langle f_{d_{i-1}+1},\cdots, f_{d_{i}} \right\rangle$. En particulier, un vecteur unimodulaire $a$ est dans $M_i$ si et seulement si la projection sur $ N_{i+1} \oplus \cdots\oplus N_k$ est divisible par $\varpi$. Ainsi tout vecteur $a$ de $M_i$ peut s'écrire sous la forme $a=a_1+a_2$ avec $a_1\in N_i$, $a_2\in M_{i-1}$ et on a d'après la description des morphismes ci-dessus :
\[\frac{l_{a_1}}{z_{d_i}}\in \Of^*(C_{e_i-1}).\]
\end{rem}

\subsection{Géométrie de la fibre spéciale de $\H_{\OC_K}^d$ \label{paragraphhdf}}

Si $s$ est un sommet de $\BC\TC$ on note 
$$\ost(s)=\uni{\mathring{\sigma}}{\sigma\ni s }{},\,\, \fst(s)= \uni{\sigma}{\sigma\ni s }{}$$
 l'étoile ouverte, respectivement fermée de $s$. Pour un simplexe $\sigma$ on note 
 $$\ost(\sigma)=\inter{\ost(s)}{s\in \sigma}{},\,\, \fst(\sigma)=\inter{\fst(s)}{s\in \sigma}{}.$$
  
  Les composantes irréductibles de la fibre spéciale de $\H_{\OC_K}^d$ 
sont indexées par l'ensemble des sommets de $\BC\TC_0$: pour chaque sommet $s$ la composante correspondante est $\H_{\F,\ost(s)}^d$.
On obtient ainsi un recouvrement admissible $(\H_{K,\ost(s)}^d)_{s\in \BC\TC_0}$
 de $\H_K^d$ dont les intersections d'ouverts sont de la forme $\H_{K,\ost(\sigma)}^d$ pour $\sigma\subset\BC\TC$ un simplexe. En fibre spéciale, le lieu lisse de cette intersection est l'ouvert \[\H_{\F,\ost(\sigma)}^d\backslash \uni{\H_{\F,\ost(s)}^d}{s\notin \sigma}{}=\H_{\F,\mathring{\sigma}}^d\] car $\mathring{\sigma}=\sigma \backslash \uni{\sigma'}{\sigma'\subsetneq\sigma}{}$. 
En particulier, le lieu lisse d'une composante irréductible est  \[\H_{\F,s}^d\cong\P_{ \F}^d\backslash \uni{H}{H}{}\]
 o\`u $H$ parcourt l'ensemble des hyperplans $\F$-rationnels (cf \eqref{eq:hatas}). 
De plus, $\H_{\F,\ost(s)}^d$ est une compactification qui s’obtient comme suit (\cite[sous-section III.1.]{gen}  ou \cite[ 4.1.2]{wa2}). Posons $Y_0 =\P_{\F}^d$ et construisons
par éclatements successifs une suite d'espaces \[Y_d\to Y_{d-1}\to \cdots \to Y_0.\]
Supposons que l’on ait préalablement construit $Y_0 , \cdots , Y_i$. On a des morphismes $p_i :Y_i \to Y_{i-1}$ et $\tilde{p}_i =p_i\circ\cdots\circ p_1 : Y_i \to Y_{0}$. On pose $Z_i$ le transformé strict par $\tilde{p}_i$ de l’union des espaces de
codimension $i + 1$ dans $Y_0$. On définit $Y_{i+1}$ comme l’éclaté de $Y_i$ suivant $Z_i$. Alors, on a $Y_d\cong\H_{\F,\ost(s)}^d$.

\subsection{Interprétation modulaire de l'espace de Drinfeld}

   Pour construire le premier revêtement de l'espace de Drinfeld, nous avons besoin d'une interprétation modulaire de cet espace, ce qui demande quelques notions et notations. 

  Rappelons que $D$ est une $K$-algèbre centrale à division, de dimension $(d+1)^2$ et d'invariant $\frac{1}{d+1}$ et considérons $\OC_{(d+1)}$ l'anneau des entiers d'une extension non-ramifiée de $K$, de degré $d+1$ contenue dans $D$. Si $A$ est une $\OC_K$-algèbre, un \emph{$\OC_D$-module formel} sur ${\rm Spec}(A)$ (ou, plus simplement, sur $A$) est un groupe formel $X$ sur $A$ muni d'une action de $\OC_D$, notée $\iota : \OC_D \to \mathrm{End}(X)$, qui est compatible avec l'action naturelle de $\OC_K$ sur l'espace tangent $\mathrm{Lie}(F)$, \emph{i.e.} pour $a$ dans $\OC_K$, $d\iota (a)$ est la multiplication par $a$ dans $\mathrm{Lie}(F)$. Le 
  $\OC_D$-module formel $X$ est dit \emph{spécial} si $\mathrm{Lie}(X)$ est un 
  $\OC_{(d+1)}\otimes_{\OC_K} A$-module localement libre de rang $1$. On a le r\'esultat classique suivant:




\begin{prop}
  Sur un corps alg\'ebriquement clos de caract\'eristique $p$, il existe, à isogénie près, un unique 
   $\OC_D$-module formel sp\'ecial de dimension $d+1$ et de $(\OC_K$-)hauteur $(d+1)^2$. 
\end{prop}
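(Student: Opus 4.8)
The plan is to reduce the statement to the Dieudonné theory of formal groups over an algebraically closed field $\kappa$ of characteristic $p$, where the classification of special $\OC_D$-modules becomes a linear-algebra problem. First I would recall that to a $p$-divisible (or formal) group $X$ over $\kappa$ one attaches its covariant Dieudonné module $M = \ddd(X)$, a free $W(\kappa)$-module equipped with a Frobenius-semilinear operator $F$ and a Verschiebung $V$ with $FV = VF = p$; an action $\iota : \OC_D \to \en(X)$ induces an $\OC_D$-module structure on $M$ commuting with $F,V$, and the speciality condition translates into the statement that $M/VM = \lie(X)$ is free of rank $1$ over $\OC_{(d+1)} \otimes_{\OC_K} \kappa$. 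Since $\OC_{(d+1)}$ is unramified of degree $d+1$ over $\OC_K$, the ring $\OC_{(d+1)} \otimes_{\OC_K} W(\kappa)$ splits as a product of $d+1$ copies of $W(\kappa)$ indexed by $\Z/(d+1)\Z$ (the embeddings of $\OC_{(d+1)}$), and correspondingly $M = \bigoplus_{i \in \Z/(d+1)\Z} M_i$. The operator $F$ shifts the grading by $1$ (it is $\sigma$-semilinear and $\sigma$ permutes the embeddings cyclically), and the $\OC_D$-action, together with a chosen uniformizer $\Pi_D$ of $\OC_D$ normalizing $\OC_{(d+1)}$ and satisfying $\Pi_D^{d+1} = \varpi$ (up to a unit), supplies a second operator also shifting the grading, which one uses to rigidify the module.

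The key steps, in order, are: (1) set up the Dieudonné module with its $\Z/(d+1)\Z$-grading and record how $F$, $V$ and $\Pi_D$ act on the grading; (2) compute the $W(\kappa)$-rank of each graded piece $M_i$ — the total height being $(d+1)^2$ forces $\sum_i \mathrm{rk}(M_i) = (d+1)^2$, and speciality ($\dim \lie(X) = d+1$, with $\lie(X)$ free of rank one over the degree-$(d+1)$ algebra) together with compatibility of $\iota$ on $\lie$ forces each $M_i$ to have rank $d+1$, i.e. the grading is "balanced"; (3) use $\Pi_D$ and $F$ to produce, on each $M_i$, a periodicity relation — essentially $F$ composed with the right power of $\Pi_D^{-1}$ is a $W(\kappa)$-linear bijection $M_i \to M_i$ after inverting $p$, and the isogeny class is detected by the Newton slope, which here is forced to be the single slope $1/(d+1)$ because $\Pi_D$ generates the full division algebra $D$ of invariant $1/(d+1)$; (4) invoke the Dieudonné–Manin classification: an isocrystal with $\OC_D$-action, all of whose slopes equal $1/(d+1)$ and of the right rank, is unique up to isogeny, and conversely such an object is realized by an explicit standard special formal group (the one underlying Drinfeld's construction). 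Existence is then just exhibiting this standard module; uniqueness up to isogeny is the Dieudonné–Manin uniqueness statement transported through the equivalence of categories between formal groups up to isogeny and isocrystals.

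The main obstacle I anticipate is step (3)–(4): checking that the speciality condition, which is a condition only on $\lie(X) = M/VM$ and hence a priori only on the associated graded at the level of the Hodge filtration, actually pins down the Newton polygon to be the single slope $1/(d+1)$ rather than merely constraining it. The point to get right is that the $\OC_D$-module structure forces $D \hookrightarrow \en^0(X)$, and since $\dim_K D = (d+1)^2$ equals the rank of the isocrystal divided by $[W(\kappa):\Z_p]$-normalization, the commutant forces the isocrystal to be isotypic; combined with speciality, which rules out slope $0$ or slope $1$ isotypic pieces (those would make $\lie$ either zero or of the wrong dimension in some graded component), the only possibility left is the isoclinic isocrystal of slope $1/(d+1)$. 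Once that is secured, uniqueness up to isogeny is formal. I would also need to be slightly careful that "isogeny" here means $\OC_D$-linear quasi-isogeny, so the uniqueness is as objects of the isogeny category of $\OC_D$-modules, not merely of formal groups; this is handled by noting $\en^0_{\OC_D}(X)$ acts transitively on the relevant rigidifications, which is where the division-algebra structure is used one last time.
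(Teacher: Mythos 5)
The paper itself gives no proof of this proposition: it is stated as ``le résultat classique suivant'' and simply quoted from the literature (Drinfel'd, Boutot--Carayol), so there is no internal argument to compare yours against. Your steps (1)--(2) set up exactly the right framework, and it is the one the paper does use later for a different purpose: the $\Z/(d+1)\Z$-graded Dieudonné/Cartier module $M=\bigoplus_i M_i$ with $V,\Pi$ of degree $1$ and $F$ of degree $-1$, each $M_i$ of rank $d+1$, and speciality translated into $\dim M_i/VM_{i-1}=1$ for every $i$; this is precisely the content of the paper's Lemme \ref{lemmodcart}. (One small caveat: for $K\neq\Q_p$ you should work with the relative, i.e. $\OC_K$-linear, Dieudonné or Cartier theory in which $FV=\varpi$, not $FV=p$, since the height in the statement is the $\OC_K$-height; this is the convention the paper adopts.)

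The genuine gap is in your steps (3)--(4), at exactly the place you flagged as the main obstacle. The claim that ``the commutant forces the isocrystal to be isotypic'' is false as soon as $d\geq 2$. Take $d+1=3$ and the isocrystal $N=N_{2/3}\oplus N_{1/6}$, where $N_{\lambda}$ denotes the simple isocrystal of slope $\lambda$ (of $\breve{K}$-dimensions $3$ and $6$). Both summands are modules over $D\otimes_K\breve{K}\cong \mat_3(\breve{K})$, the slopes lie in $(0,1)$ and sum to $3=d+1$, the total height is $9=(d+1)^2$, and $D=D_{1/3}$ embeds into $\en(N_{2/3})\cong D_{-2/3}\cong D$ and into $\en(N_{1/6})\cong D_{-1/6}$ (the invariant $-1/6-1/3=-1/2$ has index $2$, which divides $6/3$). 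So there exists a non-isoclinic formal $\OC_D$-module with the correct dimension and height; the $D$-action together with the numerical invariants does \emph{not} pin down the Newton polygon, and speciality is doing far more work than ``ruling out slope $0$ or slope $1$ isotypic pieces''. The correct use of speciality is through the graded lattice: $[M_{i+1}:VM_i]=1$ for all $i$ (this \emph{is} the special condition), which combined with $\Pi^{d+1}=\varpi$ and the commutation of $V$ and $\Pi$ forces $[M_{i+1}:\Pi M_i]=1$ for all $i$, hence the existence of a critical index $i$ with $\Pi M_i=VM_i$; applying Dieudonné--Manin to the slope-zero ($\sigma^{-1}$-linear) isocrystal $(M_i[1/p],V^{-1}\Pi)$ rather than to $N$ itself then yields both the isoclinicity of $N$ of slope $1/(d+1)$ and the uniqueness statement (points 2--4 of Lemme \ref{lemmodcart} in the paper, or Boutot--Carayol II.5). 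Your endgame --- Dieudonné--Manin plus Skolem--Noether to identify any two $D$-actions on the isoclinic isocrystal --- is fine once isoclinicity has been secured, but as written the argument does not secure it.
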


  On notera $\Phi_{\bar{\F}}$ l'unique (à isogénie près) 
$\OC_D$-module formel sp\'ecial $\Phi_{\bar{\F}}$ sur $\bar{\F}$ de dimension $d+1$ et hauteur $(d+1)^2$ (l'entier $d$ étant fixé par la suite, nous ne le faisons pas apparaître dans la notation $\Phi_{\bar{\F}}$). 

   Soit 
   $\mathrm{Nilp}_{\OC_{\breve{K}}}$ la catégorie des $\OC_{\breve{K}}$-algèbres sur lesquelles 
   $\varpi$ est nilpotent. 
Consid\'erons le foncteur $\GC^{Dr} : \mathrm{Nilp}_{\OC_{\breve{K}}} \to \mathrm{Ens}$ envoyant $A\in \mathrm{Nilp}_{\OC_{\breve{K}}}$ sur l'ensemble des classes d'isomorphisme de triplets $(\psi, X, \rho)$ avec : 
\begin{itemize}[label= \textbullet] 
\item $\psi : \bar{\F} \to A/ \varpi A$ est un $\F$-morphisme, 
\item $X$ est un $\OC_D$-module formel sp\'ecial de dimension $d+1$ et de hauteur $(d+1)^2$ sur $A$, 
\item $\rho : \Phi_{\bar{\F}} \otimes_{\bar{\F}, \psi} A/ \varpi A \to X_{A/ \varpi A}$ est une quasi-isog\'enie de hauteur z\'ero. 
\end{itemize}
   
 Le théorème fondamental suivant, à la base de toute la théorie, est dû à Drinfeld : 

\begin{theo}[\cite{dr2}]\label{Drrep}
Le foncteur $\GC^{Dr}$ est repr\'esentable par $\H_{\OC_{\breve{K}}}^d$.
\end{theo} 
 

\begin{rem}
On d\'efinit le foncteur $\tilde{\GC}^{Dr}$ de la m\^eme manière que $\GC^{Dr}$ mais en ne fixant plus la hauteur de la quasi-isog\'enie $\rho$. Alors $\tilde{\GC}^{Dr}$ est, lui aussi, repr\'esentable par un schéma formel $\widehat{\MC}^0_{Dr}$ sur $ \spf (\OC_K)$, qui se  d\'ecompose $$\tilde{\GC}^{Dr}= \coprod_{h \in \Z} \GC^{Dr,(h)},$$ o\`u $\GC^{Dr,(h)}$ est d\'efini comme pr\'ec\'edemment en imposant que la quasi-isog\'enie $\rho$ soit de hauteur $(d+1)h$. Chacun des $\GC^{Dr,(h)}$ est alors isomorphe (non canoniquement) au foncteur $\GC^{Dr}$, ce qui induit un isomorphisme non-canonique $$\widehat{\MC}^0_{Dr}\cong \H^d_{\OC_{\breve{K}}}\times \Z.$$ 
     
\end{rem}

\subsection{La tour de Drinfeld}

 On note $\mathfrak{X}$ le $\OC_D$-module formel sp\'ecial universel sur $\H_{\OC_{\breve{K}}}^d$ (cf. th. \ref{Drrep}) et 
$\tilde{\XG}$ le module formel spécial universel déduit de la représentabilité de $\tilde{\GC}^{Dr}$. Pour tout entier $n\geq 1$, l'action de $\Pi_D^n$ induit une isogénie de 
 $\XG$ et de $\tilde{\XG}$. Le schéma en groupes 
$\mathfrak{X}[ \Pi_D^n] = \ker( \mathfrak{X} \xrightarrow{\Pi_D^n} \mathfrak{X})$ (resp. $\tilde{\XG}[\Pi_D^n]$)
est fini plat, de rang $q^{n(d+1)}$ sur $\H_{\OC_{\breve{K}}}^d$ (resp. $\widehat{\MC}^0_{Dr}$). 

  On note $\Sigma^0=\H^d_{\breve{K}}$ et $\MC^0_{Dr}=(\widehat{\MC}^0_{Dr})^{\rm rig}\cong\H^d_{\breve{K}}\times \Z$. Pour $n\geq 1$ on 
 d\'efinit 
\[ \Sigma^n := (\mathfrak{X}[\Pi_D^n] \backslash \mathfrak{X}[ \Pi_D^{n-1} ])^{\text{rig}},\  \MC^n_{Dr}:= (\tilde{\XG}[\Pi^n_D]\backslash\tilde{\XG}[\Pi_D^{n-1}])^{\rm rig}.\]
Les  morphismes d'oubli $\Sigma^n \to \Sigma^0$ et $\MC^n_{Dr}\to\MC^0_{Dr}$ d\'efinissent des rev\^etements finis \'etales de groupe de Galois\footnote{De même, les morphismes intermédiaires $\Sigma^n\to\Sigma^{n-1}$ et $\MC^n_{Dr}\to\MC^{n-1}_{Dr}$ sont des revêtements finis étales de groupes de Galois $(1+\Pi^{n-1}_D \OC_D)/(1+\Pi^n_D \OC_D)$. Les tours obtenues définissent aussi des revêtements pro-étales de groupe de Galois $\OC_D^*$} $\OC_D^{*}/(1+ \Pi^n_D \OC_D)$. On a encore des isomorphismes non-canoniques $\MC^n_{Dr}\cong \Sigma^n\times \Z$ et les revêtements respectent ces décompositions.

  Le groupe $G$ s'identifie au groupe des quasi-isogénies de $\XG$, il agit donc naturellement sur chaque niveau de la tour $(\MC^n_{Dr})_{n\geq 0}$. De même, le groupe $\OC_D^{*}$ permute les points de $\Pi^n_D $-torsion et $\OC_D^{*}$ agit sur $\MC^n_{Dr}$ à travers son quotient $\OC_D^{*}/(1+ \Pi^n_D \OC_D)\simeq {\rm Gal}(\MC^n_{Dr}/\MC^0_{Dr})$. Ces deux actions commutent entre elle et les revêtements $\MC^n_{Dr}\to\MC^0_{Dr}$ sont $G$-équivariants. En revanche, le revêtement $\Sigma^n\to\Sigma^{0}$ est seulement  $\gln_{d+1}(\OC_K)$-équivariant\footnote{En fait, il l'est pour le groupe qui préserve $\Sigma^1\cong \Sigma^1\times \{0\}$ à savoir $v\circ \det^{-1}((d+1)\Z)\subset G$.}.

\subsection{Le premier revêtement} \label{paragraphrevsigman}

Nous nous intéressons désormais au cas $n=1$. On peut encore définir une flèche de réduction $\nu$ de $\Sigma^1$ vers l'immeuble de Bruhat-Tits $\BC\TC$ s'inscrivant dans le diagramme :
\[
\xymatrix{ 
\Sigma^1  \ar[d]^{\pi} \ar[dr]^{\nu} & \\
 \H^d_{\breve{K}} \ar[r]^-{\tau} & \BC \TC }.\]
Pour tout sous-complexe simplicial $T \subseteq \BC \TC$, on note 
$$\Sigma^1_{T}= \nu^{-1}(T),\,\,\Sigma^1_{L,T}= \Sigma^1_{T}\otimes_{\breve{K}} L.$$

Le groupe de Galois  de $\Sigma^1$ est $\F_{q^{d+1}}^*$. Ce groupe est cyclique et son  cardinal
 $$N=q^{d+1}-1$$ est premier \`a $p$. C'est un revêtement modérément ramifié et ces deux propriétés joueront un rôle central dans la suite. 
  Le schéma 
 $\mathfrak{X}[ \Pi_D]$ est, en particulier, un sch\'ema en $\F_p$-espaces vectoriels et la condition que $\mathfrak{X}$ soit sp\'ecial entraîne que $\mathfrak{X}[ \Pi_D]$ est un sch\'ema de Raynaud. Pour énoncer les conséquences de cette observation, introduisons quelques notations.
 
\'Ecrivons  $$\tilde{u}_1(z)=(-1)^{d}\prod_{a \in (\F)^{d+1} \backslash \{ 0 \}}a_0z_0 +\cdots +a_d z_d\in \Of(\A^{d+1}_{\F}\backslash\{0\}).$$ Si on fixe $b\in (\F)^{d+1} \backslash \{ 0 \}$, on construit $u_1(z)=(b_0z_0 +\cdots +b_d z_d)^{-N}\tilde{u}_1$ une fonction inversible de $\H_{\F,s}^d\cong\P_{ \F}^d\backslash \bigcup_{H\in \HC_1} H$ pour  $s$ le sommet standard de $\BC\TC$. Notons que la projection de $u_1$ dans $\Of^*(\H_{\F,s}^d)/ (\Of^*(\H_{\F,s}^d))^N$ ne dépends pas du choix de $b$ et celui-ci n'aura donc pas d'incidence sur les résultats à suivre. Pour simplifier, nous pourrons prendre $b=(0,\cdots,0,1)$. On peut aussi relever $u_1$ en une fonction inversible dans $\Of^*(\H_{\OC_{\breve{K}},s}^d)$ voire même dans $\Of^*(\H^d_{\breve{K},\ost(s)})$ que l'on notera encore $u_1$. 
 
  En utilisant la classification des sch\'emas de Raynaud, on a d'après \cite[Théorème 4.9 et Remarque 4.10]{J3}\footnote{Notons que l'espace $\mathring{U}_{1,\breve{K}}$ dans \cite[Remarque 4.10]{J3} coïncide avec $\H^d_{\breve{K},\ost(s)}$.} (cet énoncé étend les résultats de Teitelbaum \cite[Théorème 5]{teit2} pour $d=1$ et de Wang  \cite[Lemme 2.3.7.]{wa} pour $d$ quelconque) :

\begin{theo}\label{theoeqsigma1}
Il existe $u\in \Of^*(\H^d_{\breve{K}})$ vérifiant $u\equiv u_1 \pmod{\Of^*(\H^d_{\breve{K},\ost(s)})^N}$ telle que \[\Sigma^1\cong \H^d_{\breve{K}}((\varpi u)^{\frac{1}{N}}).\] En particulier, $\Sigma^1_{\ost(s)}\cong \H^d_{\breve{K},\ost(s)}((\varpi u_1)^{\frac{1}{N}})$.

\end{theo}

\begin{rem}\label{remactglngen}

Notons que le résultat précédent ne décrit pas l'action de $\gln_{d+1}(\OC_K)\varpi^{\Z}=\stab_{G}(\ost(s))$. Toutefois, d'après \cite[Remarque 4.12]{J3}, toutes les actions possibles sur $\Sigma^1$ commutant avec le revêtement se déduisent l'une de l'autre en tordant par un caractère \[\chi \in \homm (\gln_{d+1}(\OC_K),\mu_N(\H^d_{\breve{K},\ost(s)}))\cong\F^*.\] Nous déterminerons l'action provenant de l'interprétation modulaire dans \ref{lemlieulisse}.

\end{rem}

\section{Cohomologie des vari\'et\'es de Deligne-Lusztig\label{ssectiondl}}

\subsection{Vari\'et\'es de Deligne-Lusztig}\label{sssectiondldef}

  Considérons les groupes algébriques
$G=\gln_{d+1, \bar{\F}}$ et $G_0=\gln_{d+1,\F}$, ainsi que le morphisme de Frobenius $F$ défini par $(a_{i,j})_{i,j} \mapsto (a_{i,j}^q)_{i,j}$. 
Soit $B$ le sous-groupe des matrices triangulaires sup\'erieures, $T$ le tore des matrices diagonales, $U$ le sous-groupe de $B$ des matrices unipotentes. 

On identifie le groupe de Weyl 
$W=N_G(T)/T$ à 
 $\SG_{d+1}$ par le biais des matrices de permutation. 
 Soit $w$ la matrice de permutation associ\'ee au cycle $(0, 1, \dots, d) \in \SG_{d+1}$. On définit 
\[ Y(w):= \{ gU \in G/U |\,\, g^{-1}F(g) \in U w U \} \et \]
\[ X(w):= \{ gB \in G/B |\,\, g^{-1}F(g) \in B w B \}. \]

Il existe $\pi$ rendant le diagramme suivant commutatif : 
\[ \xymatrix{
Y(w) \ar[r]^-{\iota} \ar[d]^-{\pi} & G/U \ar[d] \\
X(w) \ar[r]^-{\iota} & G/B }. \]

Le groupe $\gln_{d+1}(\F)=G(\bar{\F})^{F=1}$ agit sur $Y(w)$ et $X(w)$ par multiplication à gauche. Le groupe fini commutatif $$T^{wF}:= \{ t \in T |\,\, wF(t)w^{-1} =t \}$$ agit librement (par multiplication à droite) sur $Y(w)$. La fl\`eche $\pi$ est un rev\^etement fini \'etale et induit un isomorphisme $\gln_{d+1}(\F)$-équivariant $$Y(w)/T^{wF} \xrightarrow{\sim} X(w).$$

  On peut rendre ces objets plus explicites comme suit \cite[2.2]{DL}. D'une part
 $T^{wF}$ s'identifie \`a $\F_{q^{d+1}}^*$ via l'application $x \in \F_{q^{d+1}}^* \mapsto {\rm diag}(x, Fx, \dots, F^dx)$. D'autre part, considérons la variété 
 $$\Omega^d_{\F}:=\P_{\F}^d \backslash \bigcup_H H,$$ o\`u $H$ parcourt l'ensemble des hyperplans $\F$-rationnels. Elle possède une action naturelle de $\gln_{d+1}(\F)$.  
 Soit $e_0=(1,0,...,0)$ et $H_0= \ker(l_{e_0})$. On rappelle que l'on a aussi construit dans la section précédente deux applications $\tilde{u}_1(z)\in \Of(\A^{d+1}_{\F}\backslash\{0\})$ et $u_1(z)=\tilde{u}_1(z)/z_d^N\in\Of^*(\Omega^d_{\F})$.  


\begin{prop}\label{propdleq}
On a des identifications $\gln_{d+1}(\F)$-équivariantes entre $X(w)$ et $\Omega^d_{\F}$ et entre $Y(w)$ et $$ \{z \in \A_{\F}^{d+1} \backslash \{ 0 \} :\tilde{u}_1(z)=1\}=:\dl^d_{\F}.$$ De plus, $\pi$ est induite par  la projection naturelle $\A_{\F}^{d+1} \backslash \{ 0 \}\to \P_{\F}^{d}$

\end{prop}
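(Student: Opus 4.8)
The plan is to deduce both identifications from the single $G$-equivariant map
$\varphi\colon G/U\to\A^{d+1}_{\bar{\F}}\setminus\{0\}$, $gU\mapsto ge_0$, which is well defined because $e_0$ is fixed by $U$, and which is compatible with $\bar{\varphi}\colon G/B\to\P^d$, $gB\mapsto[ge_0]$, since $B$ stabilises the line $\bar{\F}e_0$; both $\varphi$ and $\bar{\varphi}$ cover the natural projection $\A^{d+1}\setminus\{0\}\to\P^d$. It then suffices to prove that $\bar{\varphi}$ restricts to an isomorphism $X(w)\xrightarrow{\sim}\Omega^d_{\F}$ and that $\varphi$ (possibly after composing with multiplication by a fixed scalar $\mu\in\bar{\F}^*$) restricts to an isomorphism $Y(w)\xrightarrow{\sim}\dl^d_{\F}$; the $\gln_{d+1}(\F)$-equivariance is then automatic, since $\gln_{d+1}(\F)=G(\bar{\F})^{F=1}$ commutes with $F$ and hence preserves all the conditions below, and the compatibility with $\pi$ is built into the construction.

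For $X(w)$: attach to $gB$ the flag $\delta_\bullet=gV_\bullet^{\mathrm{std}}$. Then $g^{-1}F(g)\in BwB$ is equivalent to the assertion that $\delta_\bullet$ and $F\delta_\bullet$ are in relative position $w$, and for the Coxeter element $w=(0,1,\dots,d)$ one checks, by comparing the dimensions $\dim(\delta_i\cap F\delta_j)$, that this is in turn equivalent to $\delta_i=\delta_1+F\delta_1+\cdots+F^{i-1}\delta_1$ for all $i$. Writing $\delta_1=\bar{\F}v$, such a flag is then uniquely determined by $v$, and it exists precisely when $v,Fv,\dots,F^dv$ are linearly independent. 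By the classical factorisation of the Moore determinant $M(z):=\det(z\mid Fz\mid\cdots\mid F^dz)=\prod l_a(z)$ (the product over one $a\in\F^{d+1}\setminus\{0\}$ per $\F$-rational line), this independence is exactly the condition $[v]\notin H$ for every $\F$-rational hyperplane $H$, i.e. $[v]\in\Omega^d_{\F}$. Hence $gB\mapsto\delta_1=[ge_0]$ is a bijection $X(w)\xrightarrow{\sim}\Omega^d_{\F}$.

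For $Y(w)$: over a line $L\in X(w)=\Omega^d_{\F}$, the fibre of $\pi$ is a torsor under $T^{wF}\cong\F_{q^{d+1}}^*$ acting through $z=ge_0\mapsto xz$, hence has $N=q^{d+1}-1$ points; on the other side, $\tilde u_1$ is homogeneous of degree $N$ and does not vanish on $L\setminus\{0\}$, so $\{z\in L:\tilde u_1(z)=1\}$ likewise has $N$ points and is stable under $z\mapsto xz$ ($x\in\F_{q^{d+1}}^*$). It remains to match the two conditions cutting out these fibres. Using $\delta_i=\langle z,\dots,F^{i-1}z\rangle$ one normalises the representative of $gU$ to $g'=ht$ with $h=(z\mid Fz\mid\cdots\mid F^dz)$ and $t\in T$ (with $t_{00}=1$); a direct computation of $g'^{-1}F(g')=t^{-1}(h^{-1}F(h))F(t)$ shows that $h^{-1}F(h)=wu'$, where $u'$ is unipotent except possibly for its $(d,d)$-entry $\alpha_0=(-1)^dM(z)^{q-1}$ (by Cramer's rule applied to $F^{d+1}z=\sum_i\alpha_iF^iz$), and that $g'^{-1}F(g')\in UwU$ forces $t=I$ together with $\alpha_0=1$. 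Thus $Y(w)$ is identified, via $gU\mapsto ge_0$, with $\{z:(-1)^dM(z)^{q-1}=1\}$. Comparing with $\tilde u_1(z)=(-1)^d\prod_{a\neq 0}l_a(z)=\pm M(z)^{q-1}$ (using $\prod_{\lambda\in\F_q^*}\lambda=-1$), one sees the two loci differ at most by a rescaling $z\mapsto\mu z$ for a fixed $\mu\in\bar{\F}^*$ with $\mu^{N}=\pm 1$, which yields the desired $\gln_{d+1}(\F)$-equivariant isomorphism $Y(w)\xrightarrow{\sim}\dl^d_{\F}$ sitting over $X(w)\xrightarrow{\sim}\Omega^d_{\F}$.

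The main obstacle is the sign and normalisation bookkeeping in this last step: one must verify that $g^{-1}F(g)\in UwU$ (strictly stronger than the $BwB$-condition defining $X(w)$) reduces to exactly the single scalar equation on $z$, and then track the signs in the Moore-determinant factorisation $\prod_{a\neq 0}l_a=\pm M^{q-1}$ and in the factor $(-1)^d$ in order to decide whether the scalar $\mu$ is actually needed — together with fixing conventions (upper versus lower triangular $B$, and $w$ versus $w^{-1}$). For these verifications the cleanest route is to compare directly with \cite[2.2]{DL}.
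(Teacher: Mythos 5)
Your argument is correct and follows essentially the same route as the paper: identify $X(w)$ with flags satisfying $D_i=D_0\oplus FD_0\oplus\cdots\oplus F^iD_0$, use the Moore-determinant factorisation to land in $\Omega^d_{\F}$, and cut out $Y(w)$ inside $\A^{d+1}\setminus\{0\}$ by the scalar condition coming from Cramer's rule for $F^{d+1}z=\sum_i\alpha_iF^iz$ (the paper phrases this via the moduli description of $G/U$ as pairs of a flag with marked vectors, which is the same computation). The sign bookkeeping you defer resolves with $\mu=1$: the identity $\prod_{a\neq 0}l_a(z)=\det((z_i^{q^j}))^{q-1}$ used in the paper turns your condition $(-1)^dM(z)^{q-1}=1$ into exactly $\tilde{u}_1(z)=1$.
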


\begin{rem}\label{remdlkumm}

\begin{itemize}
\item En envoyant $(z_0,...,z_d)\in Y(w)$ sur $(z_d^{-1}, [z_0:\cdots :z_d])$ et $(t,z=[z_0:\cdots: z_d])\in \Omega^d_{\F}(u_1^{1/N})$ sur 
$(t^{-1}, \frac{z_1}{tz_d},\cdots,\frac{z_d}{tz_d})$ sous l'identification précédente, on obtient un isomorphisme $\F_{q^{d+1}}^*$-équivariant (le deuxième terme n'a pas d'action naturelle de  $\gln_{d+1}(\F)$...)
$$Y(w)\simeq  \Omega^d_{\F}(u_1^{1/N}).$$ 
\item Comme dans la remarque \ref{remactglngen}, toutes les actions possibles de $\gln_{d+1}(\F)$ sur $ \Omega^d_{\F}(u_1^{1/N})$ commutant avec le revêtement se déduisent l'une de l'autre en tordant par un caractère \[\chi \in \homm (\gln_{d+1}(\OC_K),\mu_N(\Omega^d_{\F}))\cong\F^*,\] et le lemme \ref{lemlieulisse} exhibe une identification naturelle avec l'ensemble des actions considérées dans  \ref{remactglngen}. Dans  \ref{lemlieulisse}, nous verrons que l'action provenant de l'interprétation modulaire de $\Sigma^1$ et l'action naturelle sur $\dl^d_{\F}$ coïncident sous cette bijection.
\end{itemize}
\end{rem}

\begin{proof} On identifie $G/B$ à la variété des drapeaux complets de 
$(\bar{\F})^{d+1}$. On vérifie facilement qu'un 
 drapeau $\{ 0 \} \subsetneq D_0 \subsetneq \dots \subsetneq D_{d}=(\overline{\F})^{d+1} $
est dans $X(w)$ si et seulement si pour tout $i$ on a $$D_i=D_0 \oplus FD_0 \oplus \dots \oplus F^{i} D_0.$$ On obtient un plongement  $X(w) \to \P_{\F}^{d}, (D_i) \mapsto D_0$. La projection d'un point $z=(z_0, \dots , z_d) \in (\bar{\F})^{d+1} \backslash \{ 0 \}$ est dans l'image de ce morphisme si et seulement si $(z, Fz, \dots , F^d z)$ est une base de $(\bar{\F})^{d+1}$, ce qui revient à dire que $\det((z_i^{q^j})_{0\le i,j\le d})$ est non nul. Mais 
 \[  \det((z_i^{q^j})_{i,j})^{q-1}= \prod_{a \in (\F)^{d+1} \backslash \{ 0 \}} l_{a}(z) =(-1)^{d}\tilde{u}_1(z).\]
 On en d\'eduit alors un isomorphisme $$X(w) \xrightarrow{\sim} \P_{\F}^d \backslash \bigcup_{H \in \P^d(\F)} H= \Omega^d_{\F}.
$$ 

La variété $ G/U$ classifie les paires $((D_i)_i, (e_i)_i)$ avec $(D_i)_i$ un drapeau et $e_i\in D_i\backslash D_{i-1}$. Une paire $((D_i)_i, (e_i)_i)$ est dans $Y(w)$ si et seulement si $(D_i)_i\in X(w)$ et \[\forall i< d, F^{i}e_0\equiv e_{i} \pmod{D_{i-1}} \et F^{d+1}e_0\equiv e_{0} \pmod{{\rm Vect}(e_1,\cdots,e_d)}.\] Ainsi la flèche $((D_i)_i, (e_i)_i)\mapsto e_0$ induit un  plongement $Y(w)\to\A_{\F}^{d+1} \backslash \{ 0 \}$ rendant le diagramme suivant commutatif : 
\[ \xymatrix{
Y(w) \ar[r]^-{\iota} \ar[d] & \A_{ \F}^{d+1} \backslash \{ 0 \} \ar[d] \\ 
X(w) \ar[r]^-{\iota} & \P_{ \F}^d
}. \] Un point $x=(z_0, \dots , z_d) \in \A_{ \F}^{d+1} \backslash \{ 0 \}$ est dans l'image de ce morphisme si et seulement si $\det((z_i^{q^j})_{0\le i,j\le d})=(-1)^d \det(F\cdot (z_i^{q^j})_{0\le i,j\le d})$.  Cela revient à écrire  \[\iota: Y(w)\xrightarrow{\sim} \{z \in \A_{\F}^{d+1} \backslash \{ 0 \} :\tilde{u}_1 (z)=1\}. \]

\end{proof}

\subsection{Cohomologie étale des vari\'et\'es de Deligne-Lusztig \label{sssectioncohodl}}

On note $\dl^d_{\bar{\F}}$ l'extension des scalaires de $\dl_{\F}^d$ \`a $\bar{\F}$. Soit $l\ne p$ un nombre premier, nous allons rappeler la description de la partie cuspidale  
de la cohomologie $l$-adique à support compact de $\dl^d_{\bar{\F}}$. 

 
   Soit $\theta: \F_{q^{d+1}}^*\to \bar{\Q}_l^*$ un caractère, on dit que  $\theta$ est 
    \emph{primitif} 
  s'il ne se factorise pas par la norme $\F_{q^{d+1}}^*\to \F_{q^e}^*$ pour tout diviseur propre $e$ de $d+1$.  Si $M$ est un $  \overline{\mathbb{Q}}_l[\F_{q^{d+1}}^*]$-module on note 
 $$M[\theta]={\rm Hom}_{\F_{q^{d+1}}^*}(\theta, M).$$

 Si $\pi$ est une représentation de $\gln_{d+1}( \F)$, on dit que $\pi$ est \emph{cuspidale} si 
 $\pi^{N(\F)}=0$ pour tout radical unipotent $N$ d'un parabolique propre de $\gln_{d+1}$.  
  La théorie de Deligne-Lusztig (ou celle de Green dans notre cas particulier) fournit: 
 
 \begin{theo}\label{DLet}
  Soit $\theta: \F_{q^{d+1}}^*\to \bar{\Q}_l^*$ un caractère.
  
  a) Si $\theta$  est primitif, alors 
  $\hetc{i}(\dl_{\bar{\F}}^d, \bar{\Q}_l)$ est nul pour $i\ne d$ et 
  $$\bar{\pi}_{\theta,l}:=\hetc{d}(\dl_{\bar{\F}}^d, \bar{\Q}_l)[\theta]$$
  est une $\gln_{d+1}( \F)$-représentation irréductible, cuspidale, de dimension  $(q-1)(q^2-1) \dots (q^d-1)$. Toutes les repr\'esentations cuspidales sont ainsi obtenues.

  b) Si $\theta$ n'est pas primitif, aucune repr\'esentation cuspidale n'intervient dans $\bigoplus_{i}\hetc{i}(\dl_{\bar{\F}}^d, \bar{\Q}_l)[\theta]$. 
 \end{theo}

  \begin{proof}
   Voir \cite[cor. 6.3]{DL}, \cite[th. 7.3]{DL}, \cite[prop. 7.4]{DL}, \cite[prop. 8.3]{DL}, \cite[cor. 9.9]{DL},  pour ces résultats classiques. 
  \end{proof}
  
    Ainsi, la partie cuspidale $\hetc{*}(\dl_{\bar{\F}}^d, \bar{\Q}_l)_{\rm cusp}$ de $\hetc{*}(\dl_{\bar{\F}}^d, \bar{\Q}_l)$ est concentrée en degré $d$, où elle est donnée par $\bigoplus_{\theta} \bar{\pi}_{\theta,l}\otimes\theta$, la somme directe portant sur tous les caractères primitifs.
 
 \begin{rem} (voir \cite[6.3]{DL} et \cite[Proposition 6.8.(ii) et remarques]{yosh}) Soit $N=q^{d+1}-1$ et fixons un isomorphisme $\F_{q^{d+1}}^*\simeq \Z/N\Z$ et 
 $\Z/N\Z^{\vee}\simeq \Z/N\Z$.
    Soient $\theta_{j_1}$ et $\theta_{j_2}$ deux caract\`eres primitifs vus comme des \'el\'ements de $\Z/N\Z$ via $j_1$, $j_2$, les repr\'esentations $\bar{\pi}_{\theta_{j_1}}$ et  $\bar{\pi}_{\theta_{j_2}}$ sont isomorphes si et seulement si il existe un entier $n$ tel que $j_1= q^n j_2$ dans $\Z/N\Z$. 
 \end{rem}



\subsection{Cohomologie rigide des variétés de Deligne-Lusztig}

   Nous aurons besoin d'un analogue des résultats présentés dans le paragraphe précédent pour 
   la cohomologie rigide. Cela a été fait par Grosse-Klönne dans \cite{GK5}. 
   Si $\theta: \F^*_{q^{d+1}}\to \bar{K}^*$ est un caractère, posons 
   $$\bar{\pi}_{\theta}=\hrigc{*}(\dl_{\F}^d/ \bar{K})[ \theta]:=\bigoplus_{i}\hrigc{i}(\dl_{\F}^d/ \bar{K})[ \theta],$$
 où 
 $$\hrigc{i}(\dl_{\F}^d/ \bar{K}):=\hrigc{i}(\dl_{\F}^d)\otimes_{W(\F)[1/p]} \bar{K}$$
 et où $M[\theta]$ désigne comme avant la composante $\theta$-isotypique de $M$.

\begin{theo}\label{theodlpith}
Fixons un premier $l\ne p$ et un isomorphisme $\bar{K} \cong \bar{\Q}_l$. Si $\theta$ est un caract\`ere primitif, alors  $$\bar{\pi}_{\theta}:=\hrigc{d}(\dl_{\F}^d/ \bar{K})[ \theta]$$ est isomorphe en tant que 
$\gln_{d+1}(\F)$-module à $\bar{\pi}_{\theta,l}$, en particulier c'est une représentation irréductible cuspidale.


\end{theo}

\begin{proof} Cela se fait en trois étapes, cf. \cite[4.5]{GK5}. Dans un premier temps, on montre \cite[3.1]{GK5} que 
les $\bar{K}[\gln_{d+1}(\F) \times \F^*_{q^{d+1}}]$-modules virtuels 
\[ \sum_i (-1)^i \hetc{i}(\dl_{\bar{\F}}^d, \bar{\Q}_l) \et \sum_i (-1)^i \hrigc{i}(\dl_{\F}^d/ \bar{K}) \]
co\"incident. 
Il s'agit d'une comparaison standard des formules des traces de Lefschetz en
cohomologies étale $l$-adique et rigide. Dans un deuxième temps (et c'est bien la partie délicate du résultat),
on montre que $\bigoplus_{i}\hrigc{i}(\dl_{\F}^d/ \bar{K})[ \theta]$ est bien concentré en degré $d$, cf. \cite[th. 2.3]{GK5}. On peut alors conclure en utilisant le théorème \ref{DLet}.
\end{proof}



\section{Cohomologie de de Rham et revêtements cycliques modérés}

   Dans ce chapitre $L$ sera une extension non ramifiée de $K$, donc $\varpi$ en est une uniformisante. Toutes les cohomologies de de Rham seront calculées sur le complexe surconvergeant ie. nous écrirons par abus $\hdr{*} (X)$ pour tout espace analytique  $X$ au lieu de $\hdr{*} (X^{\dagger})$.

\subsection{Réduction semi-stable généralisée}

    Soit $\XC$ un schéma formel topologiquement de type fini sur $\spf  (\OC_L)$, de fibre g\'en\'erique $\XC_\eta$ et de fibre sp\'eciale $\XC_s$. On a une flèche de spécialisation $\spg: \XC_\eta \rightarrow \XC_s$. Pour tout sous-schéma $Z\subset \XC_s$ on note $]Z[_{\XC}$ le tube de $Z$ dans $\XC_\eta$, i.e. l'espace analytique $$]Z[_{\XC}=\spg^{-1} (Z)\subset \XC_\eta.$$  

 On dit que $\XC$ est de réduction semi-stable généralisée s'il existe un recouvrement ouvert (Zariski) $\XC=\uni{U_t}{t\in T}{}$ et un jeu de morphismes étales (pour certains $r\leq d$ et $\alpha_i\geq 1$) \[\varphi_t : U_t\to  \spf (\OC_L\left\langle x_1,\cdots , x_d\right\rangle/(x_1^{\alpha_1}\cdots x_r^{\alpha_r}-\varpi )).\]  Dans ce cas, quitte à rétrécir les ouverts $U_t$ et à prendre $r$ minimal, on peut supposer que les composantes irréductibles de la fibre spéciale $\bar{U}_t$  de $U_t$ sont les $V(\bar{x}^*_i)$ pour $i\leq r$ avec $\bar{x}^*_i =\bar{\varphi}_t (\bar{x}_i)$. Elles ont les multiplicités $\alpha_i$. On dit que $\XC$ est de réduction  semi-stable si de plus tous les $\alpha_i$ valent $1$. Dans ce cas, les fibres spéciales $\bar{U}_t$ sont réduites.

 \subsection{Enoncé du résultat principal}

Soit $\XC$ un schéma formel sur $\spf  (\OC_L)$, de réduction semi-stable généralisée, de 
 fibre g\'en\'erique $\XC_\eta$ et de fibre sp\'eciale $\XC_s$. On note $(Y_i)_{i\in I}$ l'ensemble des composantes irréductibles de 
 $\XC_s$. On suppose que le recouvrement $\XC_s=\uni{Y_i}{i\in I}{}$ est localement fini, i.e. pour toute partie finie $J$ de $I$, les composantes $Y_j$ pour $j\in J$ n'intersectent qu'un nombre fini de composantes irréductibles de $\XC_s$. Si $J$ est un sous-ensemble de $I$, on note 
 $$Y_J=\inter{Y_j}{j\in J}{}.$$

Le but de cette section est de prouver le théorème suivant: 

\begin{theo}\label{theoprinc}

Soient $\XC$ semi-stable  généralisé et $(Y_i)_{i\in I}$ comme ci-dessus, et soit $\pi: \TC\rightarrow \XC_\eta$ un rev\^etement \'etale  de groupe de Galois $\mu_n$ avec $n$ premier à $p$. Pour toute partie finie $J$ de $I$ la flèche de restriction induit un isomorphisme \[\hdr{*} (\pi^{-1}(]Y_J[_\XC))\fln{\sim}{} \hdr{*} (\pi^{-1}(]Y_{J}\backslash \bigcup_{i\notin J}Y_i[_\XC)).\]
si $\XC$ est de réduction semi-stable (non généralisée) ou $n=1$ (i.e. $\TC=\XC_\eta$). 
\end{theo}

\begin{rem}

\begin{enumerate}

\item Si $\XC$ est de réduction semi-stable (non généralisée) et $n=1$ (i.e. $\TC=\XC_\eta$) le théorème ci-dessus a été démontré par 
Grosse-Klönne \cite[Theorem 2.4.]{GK2}. Il s'agit d'un point crucial dans sa preuve de la finitude de la cohomologie rigide. 
   Le principal intérêt de notre généralisation est la présence du revêtement cyclique $\pi$ de la fibre générique de $\XC_\eta$.

\item Si de plus $\XC$ est algébrisable et $|J|=1$, le résultat \cite[Lemme 5.6]{zhe} est un analogue en cohomologie étale $l$-adique du théorème ci-dessus.  
\end{enumerate}

\end{rem}

 Comme dans la preuve originale, on procède en deux étapes. 
  On applique dans un premier temps un certain nombre de réductions assez techniques 
 (cf. \ref{paragraphred1} et \ref{paragraphred2}) pour se ramener à l’étude des revêtements de couronnes. Ces étapes sont 
 similaires à la démonstration de Grosse-Klönne, qui utilise des recouvrements bien choisis et la suite spectrale de Cech. Dans notre cas, on reprend  les mêmes recouvrements de $\XC_\eta$ puis on les tire en arrière par $\pi$ pour étudier l’espace $\TC$. Le seul point technique à adapter dans ces réductions est la vérification que l'espace final obtenu est bien décrit par un revêtement de couronnes (voir \ref{lemprod}). 

 La deuxième étape de la preuve (et la plus technique) est le calcul 
 de la cohomologie de de  Rham d'un revêtement cyclique modéré d'une couronne. Cela se fait par des calculs directs
 sur le complexe de de Rham, et fournit une description très explicite de ces groupes de cohomologie. Pour énoncer le résultat nous avons besoin de quelques notations. Soient $s_1,...,s_d,r_1,...,r_d\in |\bar{K}^*|$ tels que $s_i\leq r_i$ pour tout $i$ et considérons la couronne 
 $$X=\A^d_{rig, L}(\frac{x_i}{r_i},\frac{s_i}{x_i})_{1\le i\le d}=\{(x_1,...,x_d)\in\A^d_{rig, L} |\, s_i\leq |x_i|\leq r_i\}.$$
 Soit $n$ un entier premier à $p$ et considérons le revêtement de Kummer $\TC=X((\lambda x^\beta)^{1/n})$   avec $\lambda\in L^*$ et $\beta\in\Z^d$. Cela est loisible, puisque nous allons voir que tout revêtement $\pi : \TC\to X$ galoisien cyclique d'ordre $n$ est de cette forme. On dispose donc sur $\TC$ d'une racine $n$-ième $t$ de $\lambda x^\beta$. On définit enfin 
  $$\pi_0 ={\rm PGCD}(n,\beta_1, \cdots, \beta_d), \, \tilde{n}=\frac{n}{\pi_0}, \,\tilde{\beta}=\frac{\beta}{\pi_0}, \, t_0=\frac{t^{\tilde{n}}}{x^{\tilde{\beta}}}.$$
  Enfin, si $q\ge 1$ et $I=\left\lbrace i_1<\cdots < i_q\right\rbrace$ on pose 
  $$d\log (x_I)=d\log(x_{i_1})\wedge\cdots\wedge d\log(x_{i_q}).$$
  
\begin{theo}
 Avec les notations ci-dessus, on a des isomorphismes naturels $$\hdr{q}(X)\simeq \drt{L\cdot d\log (x_I)}{I}{}$$ et
 \[\hdr{q} (\TC)\simeq \drt{t_0^i\drt{L\cdot d\log (x_I)}{I}{}}{i=0}{\pi_0 -1} \cong\drt{t_0^i\hdr{q}(X)}{i=0}{\pi_0 -1}\]
où $I$  parcours les parties de $\left\llbracket 1,d\right\rrbracket$ de cardinal $q$ dans les sommes précédentes.

\end{theo}

\begin{rem} 
\begin{enumerate}

\item On déduit facilement du théorème que 
si  $X'\subset X$ sont deux couronnes et si $\TC\to X$, $\TC'\to X'$ sont deux revêtements compatibles (ie. $\TC' =\TC\times_X X'$) alors  la fl\`eche de restriction $\hdr{*} (\TC)\rightarrow \hdr{*} ({\TC'})$ est un isomorphisme qui respecte la décomposition en  parties isotypiques.

\item En fait, tous ces résultats sont vrais pour une classe plus générale  d’espaces, que l’on appellera tores monômiaux. Nous aurons besoin de ce degré de généralité et nous renvoyons à \ref{proph1etmonom},  \ref{theodrcouronne} et \ref{coroinclqsiso} pour les énoncés dans ce cadre.
\end{enumerate}

\end{rem}

\subsection{Tores monômiaux et leurs revêtements cycliques modérés\label{sssectionrevmonom}}

\begin{defi}

On appellera tore mon\^omial de dimension $d$ un $L$-espace analytique $X$ de la forme\footnote{On a utilisé les notations multi-indice standard, par exemple $x^{\alpha}=x_1^{\alpha_1}\cdots x_d^{\alpha_d}$.} \[X=\{x=(x_1,\cdots, x_d) \in \A^d_{rig, L} : s_i\le |x_i|\le r_i \et \rho\le |x^\alpha|\le \mu\}\] pour $s_i\leq r_i\in |\bar{K}^*|$, $\alpha=(\alpha_1,...,\alpha_d)\in\N^d$ et $\rho\le \mu\in [s^{\alpha}, r^{\alpha}]\cap 
|\bar{K}^*|$). 

\end{defi}

On appelle tore mon\^omial semi-ouvert un espace défini par les mêmes inégalités qu'un tore mon\^omial, mais potentiellement strictes. 
 Nous souhaitons étendre un r\'esultat de Berkovich  \cite[Lemma 3.3]{ber6} au cas des tores monômiaux. 
 
\begin{prop}\label{proph1etmonom}
Soit $X$ un tore mon\^omial de dimension $d$ (semi-ouvert), $S$ un espace $K$-analytique, $n$ un entier premier à $p$. La projection canonique $\varphi : X_S:=X\times S \to S$ induit un isomorphisme
 \[R^q \varphi_* \mu_n\simeq\mu_n (-q)^{\binom{d}{q}}.\]

\end{prop}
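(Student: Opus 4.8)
The plan is to reduce the computation of $R^q\varphi_*\mu_n$ to the known case of a polyannulus treated by Berkovich in \cite[Lemma 3.3]{ber6}. First I would observe that it suffices to treat the case where $S$ is a point (so $X_S=X$), since forming $R^q\varphi_*$ commutes with base change for the finite locally constant sheaf $\mu_n$ on the smooth morphism $\varphi$, and the claimed identification is compatible with such base change; alternatively one argues stalkwise on $S$. The content is then the computation of $\het{q}(X,\mu_n)$ for $X$ a (semi-open) monomial torus, and the assertion to prove is that this is a free $\Z/n$-module of rank $\binom{d}{q}$, with the Tate twist $(-q)$ coming from the cup-products of the Kummer classes of the coordinate functions $x_1,\dots,x_d$.

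The key step is to exhibit $X$ as fibered over a one-dimensional annulus in a way that trivializes the extra constraint $\rho\le|x^\alpha|\le\mu$. Concretely, writing $\alpha=(\alpha_1,\dots,\alpha_d)$ and setting $g=\gcd(\alpha_1,\dots,\alpha_d)$, I would change coordinates by a monomial (unipotent) substitution in $\mathrm{GL}_d(\Z)$ so that, up to raising to a $g$-th root which does not change the homotopy type of the relevant covering spaces, one may assume $x^\alpha$ becomes a single coordinate, say $x_1$; then $X$ becomes (a finite union of) products of the annulus $\{s_1'\le|x_1|\le r_1'\}$ — possibly with the tighter bound coming from $\rho,\mu$ — with the polyannulus in $x_2,\dots,x_d$. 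For each such piece Berkovich's lemma gives $R^q\varphi_*\mu_n\simeq\mu_n(-q)^{\binom{d-1}{q}}\oplus\mu_n(-(q-1))^{\binom{d-1}{q-1}}$ by the Künneth formula for the annulus factor (which contributes $H^0=\mu_n$ and $H^1=\mu_n(-1)$, the latter generated by the Kummer class of the coordinate), and $\binom{d-1}{q}+\binom{d-1}{q-1}=\binom{d}{q}$ gives the right rank. One then checks that the monomial change of coordinates and the passage to a $g$-th root do not alter the sheaves $R^q\varphi_*\mu_n$: the former is an isomorphism of analytic spaces over $S$, and the latter is handled because $\mu_g\subset\mu_n$ and the relevant cohomology is insensitive to this finite étale base change on the source in the range considered.

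I would organize this as an induction on $d$. The base case $d=1$ is exactly Berkovich's lemma (an annulus, with $H^0=\mu_n$, $H^1=\mu_n(-1)$, $H^q=0$ for $q\ge 2$). For the inductive step, the fibration of the previous paragraph expresses a monomial torus of dimension $d$ as a polyannulus fibration over a one-dimensional annulus (after the coordinate change), and one feeds the inductive hypothesis into the Leray/Künneth spectral sequence, which degenerates because all the sheaves involved are locally constant and the annulus has cohomological dimension $1$. The Tate twists bookkeeping is automatic once one tracks that each coordinate $x_i$ contributes its Kummer class in degree $1$ with a single twist.

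The main obstacle I anticipate is the semi-open case together with the bookkeeping when the extra monomial constraint $\rho\le|x^\alpha|\le\mu$ is strictly interior to $[s^\alpha,r^\alpha]$: in that situation the locus $\{x:\rho\le|x^\alpha|\le\mu\}$ is not literally a product with the polyannulus in the remaining variables, so one must argue that, after the monomial coordinate change, $X$ is still of the form (annulus or half-open annulus in $x_1$) $\times$ (polyannulus in $x_2,\dots,x_d$), or cover it by finitely many such, and control how these pieces glue without changing the rank. This is where one needs the monomial substitution to be chosen carefully (so that $x^\alpha$ depends on exactly one new coordinate) and where one must invoke that $\het{*}$ of an annulus is insensitive to whether the boundary radii are attained — i.e. that a half-open or open annulus has the same $\mu_n$-cohomology as the closed one. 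Once that point is secured, the rest is a routine Künneth/induction argument.
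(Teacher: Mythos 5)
Your overall strategy (induction on $d$, reduction to Berkovich's one-dimensional annulus) is the paper's, but the pivotal step of your argument does not go through, and you in fact flag the problem yourself without resolving it. A monomial substitution $y=x^A$ with $A\in\gln_d(\Z)$ chosen so that $x^{\alpha/g}$ becomes $y_1$ does \emph{not} turn $X$ into (a finite union of) products of an annulus with a polyannulus: in logarithmic coordinates $X$ is the intersection of the box $\prod_i[\log s_i,\log r_i]$ with the slab $\log\rho\le\langle\alpha,\xi\rangle\le\log\mu$, a polytope with up to $2d+2$ facets, and no linear change of coordinates can convert such a polytope into a box (which has only $2d$ facets) whenever the monomial constraint is genuinely active. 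Already for $d=2$, $\alpha=(1,1)$, the substitution $y_1=x_1x_2$, $y_2=x_2$ replaces the constraint $\rho\le|x_1x_2|\le\mu$ by $s_1\le|y_1|/|y_2|\le r_1$: you have merely permuted which inequality is ``monomial'', not eliminated it. The fallback you mention --- covering by finitely many products and controlling the gluing --- is precisely the missing content, and it is not routine: one would need a Mayer--Vietoris/\v{C}ech analysis of the cover whose outcome is exactly the statement being proved.

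The paper circumvents this by never asking for a product structure. It projects $X$ onto its first $d-1$ coordinates, obtaining a map $\psi:X\to Y$ onto a $(d-1)$-dimensional monomial torus (with the bounds on $|x^{\bar\alpha}|$ adjusted by $r_d^{-\alpha_d}$ and $s_d^{-\alpha_d}$), whose fibres are one-dimensional annuli with radii varying over the base. One then computes $R^q\psi_*\mu_n$ fibrewise: the sheaf is overconvergent (being built from constant sheaves by direct images and Tate twists), so it suffices to identify its stalks, which the base change theorem reduces to the cohomology of a one-dimensional annulus over the residue field of the point --- exactly Berkovich's lemma. The Leray spectral sequence for $X\to Y\to S$ and the inductive hypothesis then finish the closed case, and the semi-open case follows by writing $X$ as an increasing union of closed monomial tori with constant transition maps on cohomology. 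If you want to salvage your approach, you should replace the coordinate change by this fibration argument; the base change/overconvergence step is the tool that lets you treat the varying annulus fibres without a product decomposition.
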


\begin{proof}

Soit $X$ un tore monômial et $s,r,\alpha,\rho,\mu$ les données associées. 
Nous allons montrer le résultat par  récurrence sur la dimension $d$. 
Si $d=1$, tous les tores monômiaux sont des couronnes qui ont été traitées dans \cite[Lemma 3.3]{ber6} (on peut aussi appliquer \cite[Lemme 4.4.]{J1} puis la suite exacte de Kummer). 

Soit  $d>1$, en projetant sur les $d-1$ premières coordonnées, on obtient un morphisme $\psi : X\to Y$ vers le tore monômial :
\[Y=\{x=(x_1,\cdots, x_{d-1}) \in \A^{d-1}_{rig, L} : s_i\le |x_i|\le r_i \et  \rho r_d^{-\alpha_d}\le |x^\alpha|\le \mu s_d^{-\alpha_d}\}\]  
avec $\bar{\alpha}=(\alpha_1 ,\cdots , \alpha_{d-1}) $. 
Soient $u: Y_S\to S$ 
les projections naturelles, alors 
$u\circ \psi=\varphi$, 
donc 
$$ R \varphi_* \mu_n\simeq Ru_*R\psi_*\mu_n.$$
Par hypothèse de récurrence 
et la suite spectrale de Leray il suffit d'établir 
les isomorphismes  \[ R^q \psi_* \mu_n=\begin{cases}  \mu_n &  {\rm si}\ q=0\\ \mu_n (-1) &  {\rm si}\ q=1\\ 0 & {\rm si}\ q>1\end{cases}.\]

  Notons que  $R^q\psi_* \mu_n$ est un faisceau surconvergent   (puisque les faisceaux constants le sont et que cette propriété est stable par image directe et twist à la Tate), on peut donc tester les isomorphismes ci-dessus fibre à fibre. Les tiges 
  du faisceau 
  $R^q\psi_* \mu_n$ se calculent grâce au théorème de changement de base \cite[TH 3.7.3]{djvdp} et font intervenir la cohomologie du faisceau $\mu_n$ sur les fibres de $\psi$. Ces fibres sont des couronnes de dimension $1$ (sur le corps de définition du point considéré), et on a déjà vu le calcul de ces groupes de cohomologie, ce qui permet de conclure. 

Pour le cas semi-ouvert, on peut trouver un recouvrement croissant de  $X$ par des tores monômiaux fermés $X_k$. Pour $\varphi_k :X_k \times S\to S$ la projection sur le second facteur, on a d'après la discussion précédente un système projectif constant de complexe $(\rrr\varphi_{k,*}\mu_n)_k$ et on en déduit le calcul de $\rrr\varphi_{*}\mu_n$ du cas fermé.


  
\end{proof}

\begin{rem}\label{remkummrev}

Si $X$ est un tore monômial sur un corps complet $S=\spg(L)$ et une couronne $Y$ qui le contient, alors, en reprenant le raisonnement par récurrence précédent sur $Y$, on montre la bijectivité du morphisme naturel de restriction $\het{1}(Y,\mu_n)\iso\het{1}(X,\mu_n) $. Par suite exacte de Kummer sur $Y$ (voir \cite[th. 3.25]{vdp} pour l'annulation du groupe de Picard de $Y$ et \cite[Lemme 4.4.]{J1} pour le calcul des fonctions inversibles), \[\het{1}(X,\mu_n)\cong L^*/(L^*)^n\times\pro{(x_i^\Z/x_i^{n\Z})}{i\le d}{}.\] 

En particulier, tout rev\^etement étale de groupe de Galois $\mu_n$ de $X_L$ est un rev\^etement de Kummer de la forme $X((\lambda x^\beta)^{1/n})$ pour $\beta$ dans $\Z^d$ et $\lambda$ dans $L^*$.


\end{rem}

\subsection{Cohomologie de de Rham d'un revêtement cyclique modéré d'un tore monomial\label{sssectiondrmonom}}

  Le but de ce paragraphe est de calculer la cohomologie de de Rham d'un revêtement 
  cyclique $\TC = X((\lambda x^\beta)^{1/n})$ (avec $\beta\in \Z^d$ et $\lambda\in L^*$) d'un tore monômial \[X=\{x=(x_1,\cdots, x_d) \in \A^d_{rig, L} : s_i\le |x_i|\le r_i \et \rho\le |x^\alpha|\le\mu\}.\]
Posons $$\pi_0 ={\rm PGCD}(n,\beta_1, \cdots, \beta_d), \, \tilde{n}=\frac{n}{\pi_0}, \, \tilde{\beta}=\frac{\beta}{\pi_0}, \, t_0=\frac{t^{\tilde{n}}}{x^{\tilde{\beta}}}.$$

\begin{theo}\label{theodrcouronne}
On dispose d'isomorphismes naturels  
$$\hdr{q}(X)=\drt{L\cdot d\log (x_I)}{I\subset \left\llbracket 1,d\right\rrbracket \\ |I|=q}{}$$
et 
$$\hdr{q} (\TC)\cong\drt{t_0^i\hdr{q}(X)}{i=0}{\pi_0 -1}.$$ 

\end{theo}

\begin{rem} Pour comprendre l'enonc\'e du th\'eor\`eme, il est int\'eressant d'\'etudier le cas analytique complexe. Si l'on prend un espace $X$ de $\C^d$ défini par les mêmes in\'egalités qu'un tore monômial i.e. \[X=\{x=(x_1,\cdots, x_d) \in \C^d : s_i\le |x_i|\le r_i \et \rho\le |x^\alpha| \le \mu\},\]  alors $X$ a le type d'homotopie d'un tore. La cohomologie de de Rham est donc donn\'ee par (Künneth) :
  \[ \hdr{q}(X)=\underset{|I|=q}{\bigoplus}  \C \cdot d\log( x_I). \]
 D'apr\`es la correspondance de Galois entre les rev\^etements et les sous-groupes de $\pi_1(X)$, un rev\^etement cyclique $\TC$ de $X$ a le type d'homotopie d'une union disjointe de tores. Cette union s'\'ecrit :
\[ \{ (x, t)\in \C^d/ \Z^d\times \C || t^n=x^\beta     \}.  \]
  Le nombre  de  composantes  connexes est la constante $\pi_0$ introduite dans l'\'enonc\'e du th\'eor\`eme. Comme $t_0^{\pi_0}=1$, la famille $\{t_0^i\}_i$ engendre le même $\C$-espace vectoriel que l'ensemble des idempotents pour les différentes composantes connexes, et on obtient
\[  \hdr{q}(\TC)= \underset{0\le i\le \pi_0-1}{\bigoplus}t_0^i \hdr{q}(X)    .\]
\end{rem}

 Avant de passer à la preuve, mentionnons quelques conséquences utiles:

\begin{coro}\label{coroinclqsiso}

On reprend les notations pr\'ec\'edentes et on se donne un autre tore mon\^omial $X'$ inclus dans $X$. Si $\TC'$ est la restriction de $\TC$ \`a $X'$ i.e. $\TC' =\TC\times_X X'$, alors la fl\`eche de restriction $\hdr{*} (\TC)\rightarrow \hdr{*} ({\TC'})$ est un isomorphisme qui respecte la décomposition en parties isotypiques.

\end{coro}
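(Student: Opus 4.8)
The plan is to reduce the statement to the explicit computation of Theorem \ref{theodrcouronne}, once we observe that restricting a Kummer cover to a sub-tore monômial produces a Kummer cover attached to the \emph{same} data $(\lambda,\beta,n)$.

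First I would check that $\TC' = \TC \times_X X'$ is the Kummer cover $X'((\lambda x^\beta)^{1/n})$. Since $X' \subset X$ is an open subdomain, so is $\TC' \subset \TC$, and restriction of differential forms yields a well-defined map $\hdr{*}(\TC) \to \hdr{*}(\TC')$. By the description of the Kummer construction (cf. \ref{paragraphcohosite}) one has $\Of_{\TC}(\pi^{-1}(U)) \cong \Of_X(U)[t]/(t^n - \lambda x^\beta)$ for affinoid opens $U \subset X$, and this presentation is compatible with base change along $X' \hookrightarrow X$; hence the root $t$ of $\lambda x^\beta$ on $\TC$ restricts to the corresponding root on $\TC'$. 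Consequently the integers $\pi_0 = {\rm PGCD}(n,\beta_1,\dots,\beta_d)$, $\tilde n$, $\tilde\beta$ attached to $\TC'/X'$ agree with those attached to $\TC/X$, and the distinguished function $t_0 = t^{\tilde n}/x^{\tilde\beta}$ restricts to the analogous function on $\TC'$. Moreover the $\mu_n$-action on $\TC'$ is the one inherited from $\TC$, so the projection $\TC' \to \TC$ is $\mu_n$-equivariant; therefore $\hdr{*}(\TC) \to \hdr{*}(\TC')$ is $\mu_n$-equivariant and automatically respects the isotypic decompositions introduced in \ref{paragraphcohoderahm}.

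It remains to see that this restriction map is an isomorphism. Applying Theorem \ref{theodrcouronne} to both $\TC \to X$ and $\TC' \to X'$ gives
$$\hdr{q}(\TC) \cong \drt{t_0^i\,\hdr{q}(X)}{i=0}{\pi_0-1}, \qquad \hdr{q}(\TC') \cong \drt{t_0^i\,\hdr{q}(X')}{i=0}{\pi_0-1},$$
with $\hdr{q}(X) = \drt{L\cdot d\log(x_I)}{|I|=q}{} = \hdr{q}(X')$ described by the \emph{same} closed forms. Under these identifications the restriction map sends the class of $t_0^i\,d\log(x_I)$ on $\TC$ to the class given by the same expression on $\TC'$, hence carries the exhibited basis bijectively onto the exhibited basis; so it is an isomorphism, and by the previous paragraph it respects the decomposition into isotypic parts.

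There is no serious obstacle here; the only point needing care is the claim, built into Theorem \ref{theodrcouronne}, that its isomorphisms are \emph{natural}, i.e. compatible with the restriction $X' \hookrightarrow X$. This is clear from their construction, since the cohomology classes produced there are represented by the universal forms $t_0^i\,d\log(x_I)$ whose restriction is given verbatim by the same formula; alternatively, one may simply run the proof of Theorem \ref{theodrcouronne} for $X$ and for $X'$ in parallel and note that every step commutes with restriction.
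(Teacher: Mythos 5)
Votre preuve est correcte et suit essentiellement la même démarche que celle de l'article : on constate que la restriction envoie la base explicite $t_0^i\,d\log(x_I)$ du théorème \ref{theodrcouronne} sur la base correspondante pour $\TC'$, d'où la bijectivité, et la $\mu_n$-équivariance de la restriction donne la compatibilité aux parties isotypiques. La vérification préliminaire que $\TC'$ est le revêtement de Kummer attaché aux mêmes données $(\lambda,\beta,n)$ est un point que l'article laisse implicite, mais elle ne change pas la nature de l'argument.
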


\begin{proof}
La base explicite du th\'eor\`eme \ref{theodrcouronne} est conserv\'ee par la restriction $\Omega_{\TC^\dagger/L}^q \to \Omega_{(\TC')^\dagger /L}^q$
d'où la bijectivité. Pour l'assertion sur les parties isotypiques, l'inclusion induit une application $\mu_n$-équivariante entre les complexes de de Rham et le résultat s'en déduit. Pour un argument plus explicite, on a la décomposition en espaces propres $\Omega^q_{\TC^\dagger/L}=\drt{t^i \Omega^q_{X^\dagger/L}}{i=0}{n-1}$ et pour $i$ fixé chaque $t_0^i\hdr{q}(X)$ est un espace propre de $\hdr{q} (\TC)$. 

\end{proof}

\begin{coro}\label{corosouv}

Les conclusions de \ref{theodrcouronne} et \ref{coroinclqsiso} sont encore vraies quand $X$ est un tore mon\^omial semi-ouvert.

\end{coro}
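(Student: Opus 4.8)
The strategy is to deduce the semi-open case from the closed case by exhausting a semi-open monomial torus by an increasing sequence of closed monomial tori and checking that the de Rham cohomology (computed on the overconvergent complex, as in our standing convention) is insensitive to this passage to the limit. Concretely, write $X$ for the semi-open monomial torus with data $s_i \le r_i$, $\alpha$, $\rho \le \mu$, where some of the inequalities defining $X$ may be strict. The plan is:

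\emph{Step 1: Exhaustion.} For each of the finitely many strict inequalities, shrink it by a small factor $\varepsilon_k$ (chosen in $|\bar K^*|$ and $\to 0$) to obtain a closed monomial torus $X_k \subset X$ with $X = \bigcup_k X_k$, the union being increasing and the $X_k$ forming an admissible affinoid cover of the quasi-Stein (in fact Stein) space $X$. One must check that each $X_k$ is again a genuine closed monomial torus — i.e. that the shrunk bounds still satisfy $\rho_k \le \mu_k$ in $[s_k^\alpha, r_k^\alpha]$ — which is immediate once the $\varepsilon_k$ are taken small enough.

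\emph{Step 2: Restrictions are isomorphisms.} By Corollary \ref{coroinclqsiso}, applied to the inclusion of closed monomial tori $X_k \hookrightarrow X_{k+1}$ and the compatible covers $\TC_k = \TC \times_X X_k$, each restriction map $\hdr{*}(\TC_{k+1}) \to \hdr{*}(\TC_k)$ is an isomorphism respecting the isotypic decomposition; in particular the transition maps in the projective system $(\hdr{*}(\TC_k))_k$ are all isomorphisms. The same holds for $(\hdr{*}(X_k))_k$.

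\emph{Step 3: Passage to the limit.} Here one invokes that, since $X$ is (quasi-)Stein and the $X_k$ exhaust it, the overconvergent de Rham cohomology of $X$ (resp. of the finite étale cover $\TC$) is computed by the complex of global sections of the de Rham complex, and this is the inverse limit of the corresponding complexes on the $X_k$; because the transition maps are isomorphisms the inverse limit (and its derived functors, which vanish as the system is Mittag-Leffler with isomorphism transition maps) recovers $\hdr{*}(\TC_k)$ for any fixed $k$. This yields $\hdr{q}(X) \cong \bigoplus_{|I|=q} L\cdot d\log(x_I)$ and $\hdr{q}(\TC) \cong \bigoplus_{i=0}^{\pi_0-1} t_0^i\,\hdr{q}(X)$ in the semi-open case, with the explicit basis and the isotypic decomposition inherited from Theorem \ref{theodrcouronne} and Corollary \ref{coroinclqsiso}. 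Finally, the statement of Corollary \ref{coroinclqsiso} itself for semi-open $X$ and $X' \subset X$ follows by the same explicit-basis argument as in its proof, since the basis $t_0^i\,d\log(x_I)$ is manifestly preserved by restriction regardless of whether the defining inequalities are strict.

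The only mild subtlety — the step I would be most careful about — is Step 3: one must make sure that "de Rham cohomology computed on the overconvergent complex" genuinely commutes with the relevant inverse limit here. For a quasi-Stein space this is exactly the content invoked at the start of the paper (the reference to \cite[Proposition 2.5]{GK1}, Kiehl's Theorem B, and the Hodge--de Rham spectral sequence), so no new input is needed; but it is worth spelling out that the system of affinoid pieces is cofinal among the closed monomial subtori and that the $\rrr^1\limp$ term vanishes. Everything else is formal from Theorem \ref{theodrcouronne} and Corollary \ref{coroinclqsiso}.
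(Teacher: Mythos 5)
Votre démonstration est correcte et suit essentiellement la même stratégie que celle de l'article : épuisement de $X$ par une réunion croissante de tores monômiaux fermés $X_i$, constance du système projectif $(\hdr{*}(\TC_i)[\chi])_i$ grâce au corollaire \ref{coroinclqsiso}, puis passage à la limite via l'annulation du $\rrr^1\limp$. Votre rédaction est simplement plus détaillée sur la justification de la commutation de la cohomologie (surconvergente) à la limite projective, point que l'article laisse implicite.
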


\begin{proof}
On écrit $X=\bigcup_i X_i$ comme une réunion croissante de tores mon\^omiaux. On a alors $\TC=\bigcup_i \TC\times_X X_i = \bigcup_i \TC_i$. Fixons $i_0\in \N$, d'après ce qui précède, on a \[\limp_i\hdr{*} (\TC_i)[\chi]=\hdr{*} (\TC_{i_0})[\chi]\et\rrr^1\limp_i\hdr{*} (\TC_i)[\chi]=0\] d'où $\hdr{*} (\TC)[\chi]\cong\hdr{*} (\TC_{i_0})[\chi]$.
\end{proof}

Passons à la preuve du résultat.
Nous commen\c{c}ons par traiter le cas des tores monomiaux i.e. $\pi= \id$ et $\TC=X$. Comme dans la d\'efinition, on se donne $r, s, \alpha, \rho, \mu$ d\'efinissant $X$ et on choisit des constantes $u=(u_i)_i$, $v=(v_i)_i$ et $w_1$, $w_2$ dans $L$ telles que 
\[ |u_i|=r_i,\ |v_i|=s_i,\ |w_1|= \rho,\ |w_2| = \mu. \]
Si $x=(x_i)_i$ d\'esigne la variable sur $X$, alors 
\[ X= \spg(L \langle \frac{x}{u}, \frac{v}{x}, \frac{x^{\alpha}}{w_1}, \frac{w_2}{x^{\alpha}} \rangle). \]
En particulier, toute fonction $f$ appartient \`a $\Of(X^{\dagger})$ admet un d\'eveloppement unique $f= \sum_{\nu \in \Z^d} a_{\nu} x^{\nu}$. Nous aurons besoin du résultat technique suivant: 
 
\begin{lem}[Int\'egration]\label{propint}
Pour $i \le d$ et $f= \sum_{\nu \in \Z^d} a_{\nu} x^{\nu} \in \Of(X^{\dagger})$, il existe une section surconvergente de d\'eveloppement $\sum_{\nu \in \Z^d :\nu_i\neq 0} \frac{1}{\nu_i}' a_{\nu} x^{\nu} \in \Of(X^{\dagger})$. 
\end{lem}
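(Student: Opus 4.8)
The plan is to reduce the assertion to an elementary estimate on Taylor coefficients, using that the $p$-adic size of $1/m$ grows at most linearly in $m$. First I would unwind overconvergence: by definition a section of $\Of(X^{\dagger})$ is the restriction of an element of $\Of(X')$ for some monomial torus $X'\supsetneq X$ obtained by enlarging all the radii $r_i$ outward, all the radii $s_i$ inward, and relaxing the bounds $\rho,\mu$; one may take $X'$ with parameters in $|\bar K^*|$. Spelling out convergence of $f=\sum_{\nu}a_\nu x^{\nu}$ on $X'$ in terms of the Banach norm $\|\cdot\|_{X}$ yields a \emph{geometric} decay estimate $|a_\nu|\,\|x^{\nu}\|_{X}\le C\,\theta^{N(\nu)}$ with $\theta<1$ and $N(\nu)\to\infty$ linearly as $\nu\to\infty$, the function $N$ being read off from the support function of the polytope cut out by the defining inequalities of $X$; overconvergence is exactly what guarantees $\theta<1$ rather than merely $|a_\nu|\,\|x^{\nu}\|_{X}\to 0$.

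Second, the arithmetic input: for any nonzero $m\in\Z$ one has $|m^{-1}|_p=p^{v_p(m)}\le |m|$, since $p^{v_p(m)}$ divides $|m|$. Hence the coefficients of the candidate section $g:=\sum_{\nu:\,\nu_i\ne 0}\frac{1}{\nu_i}\,a_\nu x^{\nu}$ satisfy $\bigl|\frac{1}{\nu_i}a_\nu\bigr|\le |\nu_i|\,|a_\nu|$, i.e. they are obtained from those of $f$ after multiplication by a factor growing only linearly in $\nu$.

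Third, I would combine the two. Choose an intermediate monomial torus $X''$ with $X\subset X''\subsetneq X'$, still a strict enlargement of $X$; such an $X''$ exists and is a monomial torus since $X'$ was obtained by relaxing every inequality. Convergence of $f$ on $X'$ gives, for the norm of $X''$, an estimate $|a_\nu|\,\|x^{\nu}\|_{X''}\le C'\,(\theta')^{N(\nu)}$ with $\theta'<1$, whence $\bigl|\frac{1}{\nu_i}a_\nu\bigr|\,\|x^{\nu}\|_{X''}\le |\nu_i|\,C'\,(\theta')^{N(\nu)}$, which tends to $0$ as $\nu\to\infty$ because a geometric decay dominates any polynomial factor. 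Therefore $g$ converges on $X''$, i.e. $g\in\Of(X'')\subset\Of(X^{\dagger})$; by uniqueness of the expansion, $g$ is the overconvergent section with the development displayed in the statement.

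The step I expect to require the most care—hence the main obstacle—is the bookkeeping with the monomial-torus norm rather than a plain polyannulus norm: one must identify $\|x^{\nu}\|_{X}$ with the maximum of $|x^{\nu}|$ over the vertices of the polytope defining $X$ (equivalently, the value at $\nu$ of its support function), check that a strict enlargement of $X$ is again a monomial torus contained in $X'$, and verify that the ``geometric beats polynomial'' comparison is uniform over all $\nu\in\Z^d$, including the mixed directions produced by the constraint $\rho\le|x^{\alpha}|\le\mu$. Once this is in place the estimate is immediate; note that overconvergence is genuinely needed, since for $f\in\Of(X)$ only one can arrange $|a_\nu|\,\|x^{\nu}\|_{X}\to 0$ sub-geometrically while $|\nu_i|\,|a_\nu|\,\|x^{\nu}\|_{X}\not\to 0$, so no such $g$ need lie in $\Of(X)$.
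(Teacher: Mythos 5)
Your proposal is correct and takes essentially the same route as the paper: overconvergence is encoded as geometric decay of the coefficients against a linear ``total degree'', the bound $|1/\nu_i|_p\le |\nu_i|$ gives at most linear growth of the new coefficients, and the geometric factor absorbs this polynomial one. The paper just does the bookkeeping differently, writing $f$ in the generators $\frac{x}{u},\frac{v}{x},\frac{x^{\alpha}}{w_1},\frac{w_2}{x^{\alpha}}$ with a radius $h>1$ and passing to $h_1\in\,]1,h[$, which is exactly your intermediate torus $X''$.
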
   

\begin{proof}
R\'e\'ecrivons $f$ sous la forme 
\[ \sum_{ (\beta, \gamma, \delta_1, \delta_2) \in \N^d \times \N^d \times \N \times \N} m_{\beta, \gamma, \delta_1, \delta_2} (\frac{x}{u})^{\beta} (\frac{v}{x})^{\gamma} (\frac{x^{\alpha}}{w_1})^{\delta_1} (\frac{w_2}{x^{\alpha}})^{\delta_2} \]
tel que il existe $h >1$ tel que $h^{e(\beta, \gamma, \delta_1, \delta_2)} | m_{\beta, \gamma, \delta_1, \delta_2} | \to 0$ avec $e(\beta, \gamma, \delta_1, \delta_2)= \sum_{1 \le k \le d} \beta_k + \sum_{1 \le k \le d} \gamma_k + \delta_1 + \delta_2$. Nous voulons montrer que le développement suivant défini bien une section surconvergente : \[\sum_{ (\beta, \gamma, \delta_1, \delta_2) } \frac{m_{\beta, \gamma, \delta_1, \delta_2}}{\beta_i - \gamma_i + \alpha_i(\delta_1- \delta_2)}  (\frac{x}{u})^{\beta} (\frac{v}{x})^{\gamma} (\frac{x^{\alpha}}{w_1})^{\delta_1} (\frac{w_2}{x^{\alpha}})^{\delta_2}\] où $(\beta, \gamma, \delta_1, \delta_2)$ parcourt les termes tels que $\beta_i - \gamma_i + \alpha_i(\delta_1- \delta_2)\neq 0$.

Pour tout $h_1 \in ] 1, h[$, on a $(\frac{h_1}{h})^{e(\beta, \gamma, \delta_1, \delta_2)} | \frac{1}{ \beta_i - \gamma_i + \alpha_i (\delta_1- \delta_2)} | \to 0$ pour le filtre des parties finies  et donc 
\[ h_1^{e(\beta, \gamma, \delta_1, \delta_2)} | \frac{m_{\beta, \gamma, \delta_1, \delta_2}}{\beta_i - \gamma_i + \alpha_i(\delta_1- \delta_2)}   | = h^{e(\beta, \gamma, \delta_1, \delta_2)} | m_{\beta, \gamma, \delta_1, \delta_2} | \frac{(h_1/h)^{e(\beta, \gamma, \delta_1, \delta_2)}}{| \beta_i- \gamma_i+ \alpha_i(\delta_1- \delta_2)| }  \to 0  \]  
et on obtient bien une section surconvergente sur $X$ de d\'eveloppement $\sum_{\nu:\nu_i\neq 0} \frac{1}{\nu_i} a_{\nu} x^\nu$. 
\end{proof}

Une $q$-forme surconvergente admet un unique d\'eveloppement $\omega=\sum_{\nu \in \N^d, I \subset \llbracket 1, d \rrbracket} a_{\nu, I}x^{\nu} d\log(x_I)$. Nous dirons que $\omega$ contient un terme en $x_i$ (resp.  un terme en $d\log(x_i)$) s'il existe $a_{\nu, I} \neq 0$ avec $\nu_i \neq 0$ (resp. avec $i \in I$). On appelle $\Omega^q_{X^{\dagger}/L} [r] $ le sous-module des formes qui ne contiennent aucun terme en $x_i$ ou $d \log(x_i)$ pour $i >r$. On observe l'inclusion $d(\Omega^q_{X^{\dagger}/L} [r] ) \subset \Omega^{q+1}_{X^{\dagger}/L} [r] $.   

On impose l'ordre lexicographique sur les couples $(q,r)$ et on montre par r\'ecurrence sur $(q,r)$ l'\'egalit\'e suivante \footnote{On a posé $\Omega^{q-1}_{X^{\dagger}/L} [r]=0$ si $q=0$} : 
\[ (\Omega^q_{X^{\dagger}/L} [r])^{d=0} = d(\Omega^{q-1}_{X^{\dagger}/L} [r] )\oplus  \bigoplus_{\substack{ I \subset \llbracket 1,r \rrbracket \\ |I | =q}}  L d\log(x_I). \] Il est aisé de voir que les modules apparaissant dans le terme de droite sont en somme directe et nous laissons la vérification de ce fait au lecteur. Nous allons seulement prouver que ces modules engendrent bien le sous-ensemble des $q$-formes fermées.  

Si $q=0$, comme $X$ est g\'eom\'etriquement connexe, on a   $\hdr{0}(X)= L= L \cdot d\log(x_{\emptyset})$. 

Soit $q \ge 1$, supposons le r\'esultat vrai pour tout $(q',r') < (q,r)$. Si $\omega \in (\Omega^q_{X^{\dagger}/L}[r])^{d=0}$, elle se d\'ecompose de mani\`ere unique de la forme \footnote{si $r=1$, on pose $\Omega^{q}_{X^{\dagger}/L}[r-1]=L$}
\[ \omega = \sum_{j \in \Z} x_r^j  \omega_j^{(0)}+ \sum_{j \in \Z}  x_r^j \omega_j^{(1)} \wedge d\log(x_r) \]
avec  $\omega_j^{(i)} \in \Omega^{q-i}_{X^{\dagger}/L}[r-1]$. Par fermeture de $\omega$, 
\begin{equation}\label{eqdom}
d\omega= \sum_{j \in \Z} x_r^j d \omega_j^{(0)} + \sum_{j \in \Z} x_r^j ((-1)^q j\omega_j^{(0)} + d \omega_j^{(1)}) \wedge d \log(x_r)  =0
\end{equation}
d'o\`u $d\omega_j^{(0)} =0$ et $d\omega_j^{(1)}= (-1)^{q-1} j \omega_j^{(0)}$ toujours par unicité de la décomposition. En particulier,  $d\omega_0^{(1)}=d\omega_0^{(0)}=0$.

D'apr\`es \ref{propint}, la somme suivante est une $(q-1)$-forme surconvergente de $X$, 
\[ \eta= \sum_{j \neq 0} (-1)^{q-1} \frac{1}{j} x_r^j \omega_j^{(1)}. \]
On v\'erifie par calcul direct, $\omega- d\eta= \omega_0^{(0)} + \omega_0^{(1)} \wedge d\log(x_r)$ (d'après \eqref{eqdom}).  On a montré que les formes $\omega_0^{(0)}$ et $\omega_0^{(1)}$ étaient fermés. On peut leur appliquer l'hypothèse de récurrence, ce qui permet de conclure. 

On s'intéresse maintenant au cas général. Nous cherchons à calculer la cohomologie d'un revêtement sur $X$ de la forme 
\[ \TC= X((\lambda x^{\beta})^{1/n})= \{ (x,t) \in X \times \A_{rig,L}^1 : t^n=\lambda x^{\beta} \}. \]
Quitte \`a \'etendre $L$, on suppose qu'il contient les racines $\pi_0$-i\`emes de l'unit\'e et que $\lambda$ vaut $1$. On a alors une d\'ecomposition 
\[ \TC= \coprod_{\zeta \in \mu_{\pi_0}(L)} X( \zeta x^{\tilde{\beta}}):= \coprod_{\zeta \in \mu_{\pi_0}(L)} \TC_{\zeta}. \]   
Appelons $\Lf_{\zeta}$ le polyn\^ome interpolateur de Lagrange s'annulant sur $\mu_{\pi_0}(L) \setminus \{ \zeta \}$ et valant $1$ en $\zeta$. Prenons $t \in \Of^*(\TC)$ une racine $n$-i\`eme de $x^{\beta}$ et $t_0= \frac{t^{\tilde{n}}}{x^{\tilde{\beta}}}$. Alors $\Lf_{\zeta}(t_0)$ est l'idempotent associ\'e \`a $\TC_{\zeta}$. Supposons que $\TC_{\zeta} \to X$ induise un isomorphisme $\hdr{*}(X) \cong \hdr{*}(\TC_{\zeta})$, on obtient une suite d'isomorphismes 
\[ \hdr{*}(\TC) = \bigoplus_{\zeta} \Lf_{\zeta}(t_0) \hdr{*}(\TC_{\zeta}) \cong \sum_{j=0}^{\pi_0-1} t_0^j \hdr{*}(X) \]
car $\{ t_0^j \}_j$ et $\{ \Lf_{\zeta}(t_0) \}_{\zeta}$ engendrent le m\^eme $L$-espace vectoriel. Il suffit ainsi de raisonner sur chaque $\TC_{\zeta}$ i.e. on peut supposer $\pi_0=1$.

Consid\'erons maintenant le rev\^etement de groupe de Galois $\mu_n^d$ suivant : 
\[ \tilde{\TC} = X(x_1^{1/n})(x_2^{1/n}) \cdots(x_d^{1/n})= \{ (x,t_1, \dots, t_d) \in X\times \A^d_{rig,L} : t_i^n= x_i \}. \] 
Les fl\`eches $((x_1, \dots, x_d), t_1, \dots, t_d) \mapsto (t_1, \dots, t_d)$ et $(t_1, \dots, t_d) \mapsto ((t_1^n , t_2^n,\dots, t_d^n), t_1, \dots, t_d)$ induisent une bijection 
\[ \tilde{\TC} \cong \{ (t_1, \dots, t_d) \in \A^d_{rig,L} :   s_i^{1/n}\le |t_i|\le r_i^{1/n} \et  \rho^{1/n}\le |t_1^{\alpha_1} t_2^{\alpha_2} \dots t_d^{\alpha_d}| \le  \mu^{1/n} \}. \]
On en d\'eduit la suite d'\'egalit\'es :
\[ \hdr{i}(\tilde{\TC})= \bigoplus_{|I|= i } L d\log(t_I) = \bigoplus_{|I|=i} L d\log(x_I) = \hdr{i}(X) \]
On remarque ais\'ement que $\tilde{\TC}$ est un rev\^etement de $\TC$ de groupe de Galois ab\'elien $\fix_{\mu_n^d}(\TC)=\{(\gamma_i)_i\in \mu_n^d : \sum_i \gamma_i\beta_i=0\}$. La fl\`eche naturelle $\hdr{*}(\TC) \to \hdr{*}(\tilde{\TC})$ identifie $\hdr{*}(\TC)$ \`a $\hdr{*}(\tilde{\TC})^{\fix_{\mu_n^d}(\TC)}$ et est donc injective. On obtient un diagramme commutatif : 
\[ \xymatrix{
\hdr{*}(\tilde{\TC}) & \\
\hdr{*}(\TC) \ar@{^{(}->}[u] & \ar[l] \hdr{*}(X) \ar[lu]_{\sim} }.\]
Ainsi, $\hdr{*}(\TC) \cong \hdr{*}(X)$.

\subsection{Une première réduction \label{paragraphred1}}

   Revenons maintenant au contexte du théorème \ref{theoprinc}. En particulier, on dispose du schéma formel $\XC$ semi-stable généralisé, d'un revêtement cyclique $\pi : \TC\to \XC_\eta$ d'ordre premier à $p$ et on note $(Y_i)_{i\in I}$ les composantes irréductibles de sa fibre spéciale. Rappelons que l'on note $Y_J=\bigcap_{j\in J} Y_j$ pour $J\subset I$. Notons que les deux résultats qui vont suivre seront valables dans les deux cas considérés dans \ref{theoprinc}. Nous spécialiserons au cas $\pi={\rm Id}$ ou au cas $\XC$ semi-stable (non généralisé) dans la section  \ref{ssectionfin}.

\begin{lem}\label{lemred1}
Pour démontrer le théorème \ref{theoprinc}
il suffit de prouver la bijectivité de 
\[\hdr{*} (\pi^{-1}(]Y_J[_\XC))\fln{\sim}{}\hdr{*} (\pi^{-1}(]Y_J\backslash Y_I[_\XC))\] quand $\XC=\spf (A)$ est affine formel, connexe et possède un morphisme étale \[\varphi: \spf (A)\rightarrow \spf (\OC_L \left\langle x_1,\cdots,x_d\right\rangle /(x_1^{\alpha_1}\cdots x_r^{\alpha_r} -\varpi)).\] 

\end{lem}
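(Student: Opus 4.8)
The statement is a standard "reduction to the affine local model" argument, and the plan is to combine a Mayer--Vietoris/Čech computation with the étale-locality of the constructions. First, by definition of generalized semi-stable reduction, there is a Zariski open cover $\XC=\bigcup_{t\in T}U_t$ with étale charts $\varphi_t\colon U_t\to\spf(\OC_L\langle x_1,\dots,x_d\rangle/(x_1^{\alpha_1}\cdots x_r^{\alpha_r}-\varpi))$; after shrinking we may assume each $U_t$ is connected affine formal and that the $\bar x_i^*$ cut out the irreducible components of $\bar U_t$. Pulling back along $\pi$ we get an open cover $\TC=\bigcup_t \pi^{-1}((U_t)_\eta)$, and the maps in the statement are maps of the associated Čech double complexes computing $\hdr{*}$ on source and target. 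Concretely, for $J\subset I$ the tubes $]Y_J[_\XC$ and $]Y_J\setminus\bigcup_{i\notin J}Y_i[_\XC$ are covered by their intersections with the $(U_t)_\eta$, and these intersections, together with all finite overlaps $U_{t_1}\cap\cdots\cap U_{t_m}$, are again connected affine formal schemes étale over a chart of the same shape (one must check here that a finite intersection of opens in a scheme étale over the model, each equipped with its own chart, is still étale over such a model — this is where the local structure theory of generalized semi-stable schemes is used, but it is routine).

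Second, having a cover, one invokes the Čech spectral sequence for $\Of$-coherent (here: de Rham) cohomology. Since $\pi$ is finite étale, $\pi_*$ is exact and commutes with the de Rham complex, so $\hdr{*}(\pi^{-1}(V))$ for an admissible open $V\subset\XC_\eta$ is computed by the hypercohomology of $\Omega^\bullet\otimes\pi_*\Of_\TC$ over $V$; decomposing into $\mu_n$-isotypic parts $\Lf_\chi$ (as set up in \S\ref{paragraphcohoderahm}) one reduces to a statement about each line bundle $\Lf_\chi$, but in fact one just pushes the whole cover forward. The Čech argument then says: the restriction map $\hdr{*}(\pi^{-1}(]Y_J[))\to\hdr{*}(\pi^{-1}(]Y_J\setminus Y_I[))$ is an isomorphism provided that, on each member of the cover and on each finite overlap, the corresponding restriction map is an isomorphism — and each member of the cover is of the form $\pi^{-1}(]Y_J[_{U_t})$ with $U_t$ affine connected carrying an étale chart. (The reduction to $Y_I=\bigcup_{i\notin J}Y_i$ replaced by a single $Y_I$: locally on $U_t$ only finitely many components meet $Y_J$, say indexed by a set containing $J$; writing $Y_I$ for the union of the ones not in $J$ is harmless since $U_t$ only sees finitely many, using the local finiteness hypothesis.)

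Third, one checks compatibility: the étale chart $\varphi_t$ identifies $]Y_J[_{U_t}$ and $]Y_J\setminus Y_I[_{U_t}$ with the preimages under $\varphi_{t,\eta}$ of the corresponding tubes in the model, and the cyclic cover $\pi$ restricted to $(U_t)_\eta$ is again a $\mu_n$-cover, so the whole local situation is the pullback along an étale morphism of the universal situation over the model $\spf(\OC_L\langle x_1,\dots,x_d\rangle/(x_1^{\alpha_1}\cdots x_r^{\alpha_r}-\varpi))$. Thus it suffices to prove the isomorphism there — but "there" is precisely the affine connected case with an étale chart, which is what the lemma asserts we may reduce to. One should note that when we only want the semi-stable (non-generalized) case or the case $\pi=\mathrm{Id}$, these properties are preserved by all the above operations (étale base change, passing to opens), so the reduction stays within the hypotheses of Theorem~\ref{theoprinc}.

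\textbf{Main obstacle.} The genuinely non-formal point is the verification that finite intersections $U_{t_1}\cap\cdots\cap U_{t_m}$ of the chart-carrying opens, and their intersections with the relevant tubes, are again affine connected (or can be covered by such) and still admit étale maps to a monomial model of the required shape $x_1^{\alpha_1}\cdots x_r^{\alpha_r}-\varpi$ — i.e. that the class of "affine formal schemes étale over a standard generalized semi-stable model" is stable under the bookkeeping of the Čech argument, and that the tubes $]Y_J[$ and $]Y_J\setminus Y_I[$ behave well under it (this uses that $]Y_J[_{U_t}$ and $]Y_J\setminus Y_I[_{U_t}$ pull back correctly along $\varphi_t$, and that the product-of-crowns description of such tubes, needed later in \ref{lemprod}, survives). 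Everything else is the standard Čech/Mayer--Vietoris reduction, which works verbatim because $\pi_*$ is exact.
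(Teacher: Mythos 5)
Il y a une lacune réelle dans votre réduction, qui provient d'une lecture erronée de l'énoncé. Dans le lemme, $Y_I=\bigcap_{i\in I}Y_i$ est l'\emph{intersection} de toutes les composantes irréductibles, et non la réunion $\bigcup_{i\notin J}Y_i$ qui apparaît dans le théorème \ref{theoprinc}. Comme chaque $Y_i$ contient $Y_I$, on a $Y_J\setminus\bigcup_{i\notin J}Y_i\subsetneq Y_J\setminus Y_I$ en général : la condition suffisante du lemme (ne retirer que la strate la plus profonde) est strictement plus faible, en apparence, que la conclusion du théorème (retirer toutes les autres composantes). Votre argument de Čech sur un recouvrement par des cartes affines ne franchit pas cet écart : en recouvrant $]Y_J\setminus\bigcup_{i\notin J}Y_i[$ par ses intersections avec les cartes, les morceaux obtenus ne sont pas de la forme $]Y_{J'}[\to]Y_{J'}\setminus Y_{I'}[$, et la parenthèse où vous écrivez que remplacer $\bigcup_{i\notin J}Y_i$ par $Y_I$ est \og harmless \fg{} est précisément le point qui demande une démonstration.

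La preuve du papier comble cet écart par une récurrence combinatoire que votre proposition ne contient pas : on introduit les ensembles $\mathring{Y}_{J,M}^{m}$ (points de $Y_J$ évitant au moins $m$ composantes de $M$), on filtre $Y_J=\mathring{Y}_{J,J^c}^{0}\supset\mathring{Y}_{J,J^c}^{1}\supset\cdots\supset\mathring{Y}_{J,J^c}=Y_J\setminus\bigcup_{i\notin J}Y_i$, et chaque cran est traité par une suite spectrale de Čech dont les intersections sont des $]\mathring{Y}_{J,M}[$; le cas $|M|=m$ se ramène alors au cas $m=0$ (c'est-à-dire exactement à l'énoncé $]Y_J[\to]Y_J\setminus Y_I[$) pour un schéma formel auxiliaire $\tilde{\XC}$ obtenu en retirant les composantes indexées par $M$. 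C'est ce va-et-vient entre $m$ quelconque et $m=0$ sur des schémas auxiliaires qui constitue le contenu substantiel du lemme; la réduction finale au cas affine connexe muni d'une carte étale, que vous décrivez correctement, n'en est que la dernière étape (et, de fait, la plus facile). Le reste de votre texte (exactitude de $\pi_*$, compatibilité des recouvrements tirés en arrière par $\pi$, stabilité des cartes par intersection) est correct mais ne remplace pas l'argument manquant.
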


\begin{proof}
On considère uniquement l'ensemble (fini) des composantes de $I$ qui intersectent $J$, ce qui nous permet de supposer que $I$ est fini.
Pour $J\subset  I$ et $M\subset J^c$, on note $$\mathring{Y}_{J,M} =Y_J\backslash \uni{Y_l}{l\in M}{},$$
i.e. les points dans toutes les composantes irréductibles de $J$ qui évitent celles de $M$. Si de plus $
m$ est un entier positif, on note $$\mathring{Y}_{J,M}^{m}=\uni{\mathring{Y}_{J,N}}{N\subset M\\ |N|=m}{},$$ i.e. les points dans toutes les composantes de $J$ qui évitent au moins $m$ composantes dans $M$. On remarque les égalités $Y_J = \mathring{Y}_{J,M}^{0}$ et $\mathring{Y}_{J,M}^{|M|}=\mathring{Y}_{J,M}$.

On raisonne sur la chaîne d'inclusions
\[{Y}_{J}\supset \mathring{Y}_{J,J^c}^{1}\supset \cdots \supset \mathring{Y}_{J,J^c},\]
 il suffit donc de montrer  (pour tout $m$) la bijectivité de la flèche naturelle
\[\hdr{*} (\pi^{-1}(]\mathring{Y}_{J,J^c}^{m}[_\XC))\fln{\sim}{} \hdr{*} (\pi^{-1}(]\mathring{Y}_{J,J^c}^{m+1}[_\XC)).\]
Notons que $\mathring{Y}^1_{J,J^c}=Y_J\backslash Y_I$. Nous allons construire un schéma formel auxiliaire $\tilde{\XC}$ de réduction semi-stable généralisée pour déduire le cas $m$ quelconque à partir du cas $m=0$. 

Chaque $]\mathring{Y}_{J,J^c}^{m}[$ admet par définition un recouvrement admissible par les ouverts suivants $(]\mathring{Y}_{J,N}[)_{N\subset J^c : |N|=m}$. Les intersections finies de ces ouverts vérifient\footnote{On suppose que tout élément de $Q$ est de cardinal $m$.}
\[\inter{\mathring{Y}_{J,N}}{N\in Q}{}=\mathring{Y}_{J,M},\]
avec $M=\bigcup_{N\in Q} N$. Lorsque $Q$ varie, $M$ parcourt l'ensemble des parties disjointes de $J$ pour tout $M$ de cardinal au moins $m$. On se ramène donc, grâce à la suite spectrale de Cech, à montrer l'isomorphisme \[\hdr{*} (\pi^{-1}(]\mathring{Y}_{J,M}\cap\mathring{Y}_{J,J^c}^{m}[_\XC)) =\hdr{*} (\pi^{-1}(]\mathring{Y}_{J,M}[_\XC))\fln{\sim}{} \hdr{*} (\pi^{-1}(]\mathring{Y}_{J,M}\cap\mathring{Y}_{J,J^c}^{m+1}[_\XC))\] pour tout $M$ de cardinal au moins $m$. Quand $M$ n'est pas de cardinal $m$, on a $\mathring{Y}_{J,M}=\mathring{Y}_{J,M}\cap\mathring{Y}_{J,J^c}^{m+1}$ et la bijectivité est triviale.

Quand $M$ est  de cardinal $m$, on se place dans la préimage  $\tilde{\XC}\subset \XC$ de l'ouvert en fibre spéciale $\XC_s\backslash \left(\uni{Y_i}{i\in M}{}\right)$ par la flèche de spécialisation. Les composantes irréductibles de $\tilde{\XC}_s$ sont indexées par $I\backslash M$. On reprend les notations $\tilde{Y}_J$, $\mathring{\tilde{Y}}_{J,N}$ et $\mathring{\tilde{Y}}_{J,N}^m$ pour $J\subset M^c$ et $N\subset  (M\cup J)^c$. On observe 
\[\mathring{Y}_{J,M}=\tilde{Y}_{J}\et \mathring{Y}_{J,M}\cap\mathring{Y}_{J,J^c}^{m+1}=\mathring{\tilde{Y}}_{J,(J\cup M)^c}^{1}.\]
On s'est ramené à la condition suffisante du lemme par l'observation $\tilde{Y}_J\backslash \tilde{Y}_I=\mathring{\tilde{Y}}_{J,J^c}^1$.

Il reste à expliquer pourquoi on peut supposer  $\XC$  affine formel. 
On se donne un recouvrement affine $\XC=\uni{U_s}{s\in S}{}$ et un jeu de morphismes étales \[\varphi_s : U_s\to  \spf (\OC_L\left\langle x_1,\cdots , x_d\right\rangle/(x_1^{\alpha_1}\cdots x_r^{\alpha_r}-\varpi )).\] On note $U_T=\inter{U_t}{t\in T}{}$ pour $T\subset S$. On a un recouvrement admissible $\TC=\uni{\pi^{-1} (U_{s,\eta})}{s\in S}{}$ donc par une nouvelle application de la suite spectrale de Cech, il suffit de prouver la bijectivité de \[\hdr{*} (\pi^{-1}(]Y_J[_\XC \cap U_{T,\eta}))\fln{\sim}{}\hdr{*} (\pi^{-1}(]Y_J\backslash Y_I[_\XC\cap U_{T,\eta})).\] pour tout $T\subset S$. On s'est donc ramené \`a $\XC=U_T =\spf (A)$ affine formel, $\TC=\pi^{-1} (U_{T,\eta})$ affinoïde et $\varphi={\varphi_s}_{|U_T}: \XC\to  \spf (\OC_L\left\langle x_1,\cdots , x_d\right\rangle/(x_1^{\alpha_1}\cdots x_r^{\alpha_r}-\varpi ))$ pour un certain $s\in T$. 
Quitte à réduire $\XC$, on peut supposer qu'il est connexe. 
\end{proof}

\subsection{Une deuxième réduction\label{paragraphred2}}

   Nous supposons maintenant que nous sommes dans le contexte du lemme \ref{lemred1}. En particulier,  $\XC=\spf (A)$ est affine 
   et connexe et possède un morphisme étale $\varphi: \spf (A)\rightarrow \spf (\OC_L \left\langle x_1,\cdots,x_d\right\rangle /(x_1^{\alpha_1}\cdots x_r^{\alpha_r} -\varpi))$. 
Notons $x_i^* =\varphi^* (x_i)\in A$ et $\bar{x}_i$ son image en fibre spéciale. Quitte à réduire $\XC$, on peut supposer $I= \left\llbracket 1,r\right\rrbracket$, $J=\left\llbracket 1,|J|\right\rrbracket \subset I$, $Y_i =V(\bar{x}_i)$. Notons aussi que 
$\TC$ est affinoide, disons $\TC=\spg(B)$.

\begin{lem}\label{lemred2}

 Pour montrer \ref{lemred1}, il suffit de prouver que pour toute partition $\left\llbracket 1,r\right\rrbracket =J_1 \cup J_2 \cup  J_3$ et tous $\lambda,\beta\in ]0,1[\cap |\bar{K}^*|$, les restrictions de $\TC$ à $\tilde{C}$ et à $\tilde{C}'$ ont la même cohomologie de de Rham, où   \[\tilde{C}=\left\lbrace s\in \XC_\eta :  |x_{j_1}^* (s)|\leq \lambda, |x_{j_2}^* (s)|=\beta, |x_{j_3}^* (s)|\leq\beta, \, \forall j_k\in J_k\right\rbrace,\]\[ \tilde{C}'=\left\lbrace s\in \XC_\eta :  |x_{j_1}^* (s)|\leq \lambda, |x_{j_2}^* (s)|=\beta, |x_{j_3}^* (s)|=\beta, \, \forall j_k\in J_k\right\rbrace.\] 

\end{lem}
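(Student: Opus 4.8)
\emph{Première étape : mise en place et réduction aux affinoïdes.} On se place dans le cadre du lemme \ref{lemred1} : $\XC=\spf(A)$ est affine connexe, muni d'un morphisme étale $\varphi$ vers $\spf(\OC_L\langle x_1,\dots,x_d\rangle/(x_1^{\alpha_1}\cdots x_r^{\alpha_r}-\varpi))$, avec $I=\llbracket 1,r\rrbracket$, $J=\llbracket 1,|J|\rrbracket$, $Y_i=V(\bar x_i)$ et $x_i^*=\varphi^*(x_i)$. La relation $x_1^{*\alpha_1}\cdots x_r^{*\alpha_r}=\varpi$ force $|x_i^*(s)|\in\,]0,1]$ pour tout $s\in\XC_\eta$ et tout $i\in I$, et confine, sur tout domaine où $|x_j^*|\le\lambda<1$ ($j\in J$), chaque $|x_i^*|$ dans un segment compact de $]0,1]$ ; via $\varphi$, les tubes des parties localement fermées de $\XC_s$ admettent une description explicite en termes de couronnes, de tores monômiaux et de boules. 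On a $]Y_J[_\XC=\{|x_j^*|<1:j\in J\}$ et $]Y_J\setminus Y_I[_\XC=\,]Y_J[_\XC\cap\bigcup_{i\in I\setminus J}\{|x_i^*|=1\}$. Je procéderais comme Grosse-Kl\"onne dans le cas $\TC=\XC_\eta$ (\cite{GK2}). D'abord on recouvre de façon admissible et croissante $]Y_J[_\XC$ (resp. $]Y_J\setminus Y_I[_\XC$) par les affinoïdes $\{|x_j^*|\le\lambda:j\in J\}$ (resp. par leurs traces), $\lambda\in\,]0,1[\cap|\bar K^*|$ ; comme $\pi$ est fini étale, $\pi^{-1}$ préserve ces recouvrements et la cohomologie de de Rham globale s'obtient comme $\limp_\lambda$ de celle des morceaux. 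La dimension finie de cette dernière, fournie par les descriptions explicites du \S\ref{sssectiondrmonom} sur les tores monômiaux et leurs revêtements de Kummer, donne la propriété de Mittag-Leffler et $\rrr^1\limp=0$, et ramène le lemme \ref{lemred1} à la bijectivité, pour chaque $\lambda$, de la restriction entre les cohomologies des $\pi$-préimages de $\{|x_j^*|\le\lambda:j\in J\}$ et de sa trace sur $]Y_J\setminus Y_I[_\XC$.

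\emph{Seconde étape : dévissage de Cech.} À $\lambda$ fixé, on traite les coordonnées $x_i^*$ ($i\in I\setminus J$) une à une : pour chacune on recouvre de manière admissible le domaine $\{|x_i^*|\le 1\}$ par les sous-domaines $\{|x_i^*|\le\beta\}$ et $\{|x_i^*|\ge\beta\}$ associés à des échelles bien choisies $\beta\in\,]0,1[\cap|\bar K^*|$ (en nombre fini, $|x_i^*|$ restant dans un compact), on tire ces recouvrements en arrière par $\pi$ et l'on calcule par la suite spectrale de Cech. Les intersections de pièces font apparaître les circonférences $\{|x_i^*|=\beta\}$ ; après un dévissage auxiliaire des pièces du type $\{|x_i^*|\ge\beta\}$, dont la cohomologie s'identifie à celle de $\{|x_i^*|=\beta\}$ par un argument analogue à \ref{coroinclqsiso}, tous les ouverts intervenant dans les complexes de Cech — de même que les termes analogues pour $]Y_J\setminus Y_I[_\XC$ — s'identifient, via $\varphi$, à des espaces $\pi^{-1}(\tilde C)$ pour une partition $\llbracket 1,r\rrbracket=J_1\cup J_2\cup J_3$ avec $J\subset J_1$, $J_2$ les coordonnées figées à l'échelle courante et $J_3$ celles soumises à une majoration, l'inclusion $\tilde C'\subset\tilde C$ correspondant au passage d'une coordonnée de $J_3$ de la condition $|x_i^*|\le\beta$ à la condition $|x_i^*|=\beta$. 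Une récurrence sur $|I\setminus J|$, puis à chaque pas sur le nombre de tranches d'échelle, ramène alors la bijectivité cherchée à l'hypothèse du lemme : $\hdr{*}(\pi^{-1}(\tilde C))\to\hdr{*}(\pi^{-1}(\tilde C'))$ est un isomorphisme pour toute partition et tous $\lambda,\beta$. Le respect des parties isotypiques est automatique, toutes les flèches étant $\mu_n$-équivariantes.

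\emph{Principal obstacle.} Les idées sont celles de Grosse-Kl\"onne ; le point délicat est purement combinatoire : organiser échelles et recouvrements de sorte que chaque ouvert — et chaque intersection finie d'ouverts — des complexes de Cech soit effectivement du type $\tilde C$, sans borne inférieure parasite. C'est la relation monômiale $x_1^{*\alpha_1}\cdots x_r^{*\alpha_r}=\varpi$ qui rend ce contrôle possible, en bornant a priori le domaine de variation de chaque $|x_i^*|$ ; c'est aussi le seul point de divergence réelle avec la preuve originale, qui ne traite que le cas $\TC=\XC_\eta$. Le revêtement $\pi$, lui, n'ajoute aucune complication dans ce dévissage : fini étale, il transforme recouvrements admissibles et complexes de Cech en objets de même nature et commute aux décompositions isotypiques.
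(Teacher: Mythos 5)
Your overall strategy --- filter $]Y_J[_\XC$ by the affinoids $\XC_\eta(\frac{J}{\lambda})$, then run a Cech/Mayer--Vietoris d\'evissage at scales $\beta$ so that everything reduces to comparisons of $\tilde C$-type spaces --- is the same as the paper's, and your first step (reduction to fixed $\lambda$, with $\rrr^1\limp=0$) is fine. But the second step has a genuine gap, precisely at the point you dismiss as "purement combinatoire". The real difficulty is not combinatorial: the hypothesis of the lemma only covers spaces contained in $]Y_I[_\XC$, i.e.\ with every $|x_i^*|\le\max(\lambda,\beta)<1$, because the structure lemma $]Y_I[_\XC\cong S\times X$ with $X\subset\mathring{\B}^r_L$ --- on which the final verification entirely rests --- is only valid strictly inside the tube. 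Two of your moves leave this region. First, the target $]Y_J\setminus Y_I[_\XC(\frac{J}{\lambda})$ is cut out by conditions $|x_i^*|=1$, so its natural Cech pieces are $\tilde C'$-type spaces with $\beta=1$, which the hypothesis does not (and cannot) cover. The paper handles this by overconvergence: $\pi^{-1}(]Y_J\setminus Y_I[(\frac{J}{\lambda}))^\dagger=\bigcap_{\beta<1}\pi^{-1}(\XC_\eta(\frac{J}{\lambda},\frac{\beta}{M}))^\dagger$, so the dagger cohomology of the target is the colimit over $\beta\to 1^-$ of that of the pieces $\{|x_i^*|\ge\beta\}$; this step is absent from your proposal.

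Second, your auxiliary claim that $\hdr{*}$ of $\{|x_i^*|\ge\beta\}$ identifies with that of $\{|x_i^*|=\beta\}$ "par un argument analogue \`a \ref{coroinclqsiso}" is not justified: that corollary applies to covers of monomial tori, whereas $\{|x_i^*|\ge\beta\}$ contains points with $|x_i^*|=1$, hence is not contained in $]Y_I[_\XC$ and carries no product structure $S\times(\text{tore mon\^omial})$. Such a comparison is essentially an instance of the excision statement being proved, so invoking it here is circular. The paper never needs it: for each fixed $\beta$ it writes the two-term admissible cover $\XC_\eta(\frac{J}{\lambda})=\XC_\eta(\frac{J}{\lambda},\frac{J^c}{\beta})\cup\bigcup_{i\in J^c}\XC_\eta(\frac{J}{\lambda},\frac{\beta}{\{i\}})$ and uses Mayer--Vietoris in the "transfer" direction (bijectivity of $\hhh^i(U)\to\hhh^i(U\cap V)$ implies bijectivity of $\hhh^i(X)\to\hhh^i(V)$), so that the only comparisons actually required involve $U=\XC_\eta(\frac{J}{\lambda},\frac{J^c}{\beta})$ and its subspaces, all of which have every $|x_i^*|\le\beta<1$ and are therefore genuine $\tilde C/\tilde C'$ pairs covered by the hypothesis. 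You would need to add both the overconvergent colimit and this Mayer--Vietoris transfer to close the argument.
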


\begin{proof}

Rappelons que : \[]Y_J[_\XC=\{s\in \XC_\eta |\,  \forall j\in J\,, |x_j^*(s)| < 1\},\]
\[]Y_J\backslash Y_I[_\XC=\{s\in ]Y_J[_\XC|\, \exists  i\in J^c, |x_i^*(s)| = 1\}.\] 
Nous allons filtrer ces espaces par des ouverts plus simples. Pour 
$\lambda\in ]0,1[\cap |\bar{K}^*|$ posons\footnote{Pour tout $M\subset I$ nous confondrons $M$ et $\{x_i^* :j\in M\}$.}
\[\XC_\eta (\frac{J}{\lambda})=\{s\in \XC_\eta|\, \forall j\in J, |x_j^*(s)|\le \lambda\}.\]
On définit $]Y_J\backslash Y_I[_\XC (\frac{J}{\lambda})$ d'une manière semblable 
et on remarque que $\pi^{-1}(]Y_J[_\XC)=\uni{\pi^{-1}(\XC_\eta (\frac{J}{\lambda})}{\lambda< 1}{})$ est un recouvrement admissible (et la situation est similaire pour $\pi^{-1}(]Y_J\backslash Y_I[_\XC)$). Il suffit donc de montrer la bijectivité de 
\[\hdr{*} (\pi^{-1}(\XC_\eta(\frac{J}{\lambda})))\fln{\sim}{}\hdr{*} (\pi^{-1}(]Y_J\backslash Y_I[_\XC (\frac{J}{\lambda})))\] pour tout $\lambda$ comme ci-dessus. 

 Ensuite, intéressons-nous à  l'espace 
\[\uni{\XC_\eta (\frac{J}{\lambda},\frac{\beta}{\left\lbrace i \right\rbrace})}{i\in J^c}{}=\{s\in \XC_\eta (\frac{J}{\lambda})|\,\exists  i\in J^c,\,\, |x_i^*(s)|\ge \beta\}\] pour $\beta\in ]0,1[\cap |\bar{K}^*|$.
 On remarque la suite d'inclusion $$\pi^{-1}(]Y_J\backslash Y_I[_\XC (\frac{J}{\lambda})) \subset \pi^{-1}(\uni{\XC_\eta (\frac{J}{\lambda},\frac{\beta}{\left\lbrace i \right\rbrace})}{i\in J^c}{}) \subset \pi^{-1}(\XC_\eta(\frac{J}{\lambda})).$$ \'Etudions d'abord la flèche en cohomologie induite par la première inclusion. On a un recouvrement admissible $(\pi^{-1}(\XC_\eta (\frac{J}{\lambda},\frac{\beta}{\left\lbrace i \right\rbrace})))_{i\in J^c}$ de $\pi^{-1}(\uni{\XC_\eta (\frac{J}{\lambda},\frac{\beta}{\left\lbrace i \right\rbrace})}{i\in J^c}{})$ et les intersections sont de la forme $\pi^{-1}(\XC_\eta (\frac{J}{\lambda},\frac{\beta}{M}))$ pour $M\subset J^c$. 
 
 De plus, on a l'identité \[\pi^{-1}(]Y_J \backslash Y_I[_\XC  (\frac{J}{\lambda} ))^\dagger=\bigcap_{\beta < 1}\  \pi^{-1}(\XC_\eta (\frac{J}{\lambda},\frac{\beta}{M}))^\dagger\] car chaque voisinage stricte de $\pi^{-1}(]Y_J \backslash Y_I[_\XC  (\frac{J}{\lambda} ))$ contient un ouvert de la forme  $\pi^{-1}(\XC_\eta (\frac{J}{\lambda},\frac{\beta}{M}))$ pour un certain $\beta$. En comparant les complexes de de Rham surconvergeant, on obtient la bijectivité de
 \[\varinjlim_{\beta < 1}\  \hdr{*} ( \pi^{-1}(\XC_\eta (\frac{J}{\lambda},\frac{\beta}{M})))\fln{\sim}{}\hdr{*} (\pi^{-1}(]Y_J\backslash Y_I[_\XC (\frac{J}{\lambda})))\]
 puis celle de \[\varinjlim_{\beta < 1}\  \hdr{*} ( \pi^{-1}(\uni{\XC_\eta (\frac{J}{\lambda},\frac{\beta}{\left\lbrace i \right\rbrace})}{i\in J^c}{}))\fln{\sim}{}\hdr{*} (\pi^{-1}(]Y_J\backslash Y_I[_\XC (\frac{J}{\lambda})))\] 
 grâce à la suite spectrale de Cech.
 
 Il suffit donc d'établir l'isomorphisme suivant pour tout $\beta$

\begin{equation}\label{eqdr2}
\hdr{*} (\pi^{-1}(\XC_\eta (\frac{J}{\lambda})))\fln{\sim}{}\hdr{*} ( \pi^{-1}(\uni{\XC_\eta (\frac{J}{\lambda},\frac{\beta}{\left\lbrace i \right\rbrace}}{i\in J^c}{})))
\end{equation}
en montrant que cela découle de la condition imposée dans l'énoncé.

On a un recouvrement admissible à deux termes $$\pi^{-1}(\XC_\eta(\frac{J}{\lambda}))=\pi^{-1}(\XC_\eta (\frac{J}{\lambda}, \frac{J^c}{\beta})) \cup \pi^{-1}(\uni{\XC_\eta(\frac{J}{\lambda},\frac{\beta}{\left\lbrace i \right\rbrace})}{i\in J^c}{}).$$ En utilisant la suite exacte de Mayer-Vietoris qui en découle, il suffit d'établir\footnote{Soit un recouvrement admissible $X=U \cup V$, supposons la flèche $\hhh^i(U)\iso \hhh^i(U\cup V)$ bijective pour tout $i$. Par Mayer-Vietoris, on a une suite exacte $0\to \hhh^i(X)\to \hhh^i(U)\oplus \hhh^i(V)\to \hhh^i(U\cup V)\to 0$. Dit autrement, une classe sur l'union $X$ est équivalente à  une classe sur chaque ouvert $U$, $V$ qui coïncident sur l'intersection. Pour toute classe dans $V$, sa restriction à $U\cup V$ se relève de manière unique à $U$. En particulier, la classe de départ sur $V$ se relève de manière unique en une classe sur $X$. Cela établit l'isomorphisme voulue $\hhh^i(X)\iso \hhh^i( V)$.}
\[\hdr{*} (\pi^{-1}(\XC_\eta (\frac{J}{\lambda}, \frac{J^c}{\beta})))\fln{\sim}{}\hdr{*} (\pi^{-1}(\XC_\eta (\frac{J}{\lambda}, \frac{J^c}{\beta}) \cap \uni{\XC_\eta(\frac{J}{\lambda},\frac{\beta}{\left\lbrace i \right\rbrace})}{i\in J^c}{}))\]
On réécrit $$\pi^{-1}(\XC_\eta (\frac{J}{\lambda}, \frac{J^c}{\beta}) \cap \uni{\XC_\eta (\frac{J}{\lambda},\frac{\beta}{\left\lbrace i \right\rbrace})}{i\in J^c}{})=\pi^{-1}(\uni{\XC_\eta (\frac{J}{\lambda},\frac{\beta}{\left\lbrace i \right\rbrace}, \frac{J^c}{\beta}))}{i\in J^c}{}).$$
Pour établir l'isomorphisme ci-dessus, comparons les deux espaces considérés avec $\pi^{-1}(\XC_\eta (\frac{J}{\lambda}, (\frac{J^c}{\beta})^{\pm 1}))$ ie. prouvons la bijectivité des deux flèches 

\[\hdr{*} (\pi^{-1}(\XC_\eta (\frac{J}{\lambda}, \frac{J^c}{\beta})))\fln{\sim}{}\hdr{*} (\pi^{-1}(\XC_\eta (\frac{J}{\lambda}, (\frac{J^c}{\beta})^{\pm 1}))),\]

\[\hdr{*} (\pi^{-1}(\uni{\XC_\eta (\frac{J}{\lambda},\frac{\beta}{\left\lbrace i \right\rbrace}, \frac{J^c}{\beta}))}{i\in J^c}{}))\fln{\sim}{}\hdr{*} (\pi^{-1}(\XC_\eta (\frac{J}{\lambda}, (\frac{J^c}{\beta})^{\pm 1}))).\]Pour la première, cela  revient à comparer les deux couronnes de l'énoncé du lemme \ref{lemred2} pour la partition  $I=J\cup \emptyset\cup J^c$.
D'après la suite spectrale de Cech pour le recouvrement $(\XC_\eta (\frac{J}{\lambda},\frac{\beta}{\left\lbrace i \right\rbrace}, \frac{J^c}{\beta}))_{i\in J^c}$, on se ramène pour la deuxième à 
\[\hdr{*} (\pi^{-1}(\XC_\eta (\frac{J}{\lambda}, \frac{I\backslash (J\cup M)}{\beta},(\frac{M}{\beta})^{\pm 1} )))\fln{\sim}{}\hdr{*} (\pi^{-1}(\XC_\eta (\frac{J}{\lambda}, (\frac{J^c}{\beta})^{\pm 1})))\] pour tout $M\subset J^c$.
Là encore, cela revient à établir la condition suffisante du lemme pour la partition $I=J\cup M\cup (I\backslash (J\cup M))$

\end{proof}

\subsection{Fin de la preuve du théorème \ref{theoprinc}\label{ssectionfin}}

  Fixons une partition $\left\llbracket 1,r\right\rrbracket =J_1 \cup J_2 \cup  J_3$ et reprenons les notations introduites dans le lemme \ref{lemred2}. Dans toute la suite, nous appellerons par abus $\TC$ le revêtement sur $\tilde{C}$ et $\TC'$ celui sur $\tilde{C}'$. Nous devons comparer les cohomologies de de Rham de $\TC$ et de $\TC'$.  Pour cela, on consid\`ere les inclusions $\tilde{C}' \to \tilde{C} \to ]Y_I[_\XC =\left\lbrace s \in \XC_\eta, \forall i\in I, |x_i^*(s)|<1\right\rbrace$. 
Nous allons commencer par une description plus simple de $  ]Y_I[_\XC$, fournie par: 
  
\begin{lem}
Il existe une $\OC_L$-algèbre $\varpi$-adiquement complète et formellement lisse $\hat{B}$ et un isomorphisme 
 \[]Y_I[_\XC \cong \spg(\hat{B} \otimes_{\OC_L} L) \times \{Z=(Z_1,\cdots, Z_r)\in \mathring{\B}^{r}_L :Z^{\alpha}=\varpi\} \] 
 envoyant $Z_i$ sur $x_i^*$.
 \end{lem}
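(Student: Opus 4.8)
The plan is to prove the statement first for the model $\spf R$, with $R=\OC_L\langle x_1,\dots,x_d\rangle/(x_1^{\alpha_1}\cdots x_r^{\alpha_r}-\varpi)$, and then to transport it along the \'etale morphism $\varphi$. I will use the standard fact that a tube $]Z[_\XC$ over a closed subscheme $Z\subset\XC_s$ depends only on the formal completion of $\XC$ along $Z$, and that in the affine case $\XC=\spf A$ it is the (Berthelot) generic fibre of $\spf A^\wedge$, where $A^\wedge$ is the completion of $A$ along the ideal $\mathfrak a=(x_1^*,\dots,x_r^*)$ cutting out $Y_I$ (note $\varpi\in\mathfrak a$). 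Writing $R=\OC_L\langle x_{r+1},\dots,x_d\rangle\,\widehat\otimes_{\OC_L}\,R_0$ with $R_0=\OC_L\langle x_1,\dots,x_r\rangle/(x_1^{\alpha_1}\cdots x_r^{\alpha_r}-\varpi)$, the completion of $R$ along $(x_1,\dots,x_r)$ is $R^\wedge=\OC_L\langle x_{r+1},\dots,x_d\rangle\,\widehat\otimes_{\OC_L}\,R_0^\wedge$; taking generic fibres, this is $\spg(\OC_L\langle x_{r+1},\dots,x_d\rangle\otimes L)\times(\spf R_0^\wedge)_\eta$, and $(\spf R_0^\wedge)_\eta$ is exactly the tube of the origin in $\spf R_0$, i.e. $\{Z\in\mathring\B^r_L:Z^\alpha=\varpi\}$. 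This settles the model case with $\hat B=\OC_L\langle x_{r+1},\dots,x_d\rangle$.

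For general $A$, put $\bar B=A/\mathfrak a=A\otimes_R R/(x_1,\dots,x_r)$. Since $R/(x_1,\dots,x_r)=\kappa\langle x_{r+1},\dots,x_d\rangle$ (killing $x_1,\dots,x_r$ also kills $\varpi$), $\bar B$ is a topologically finitely generated \'etale $\kappa\langle x_{r+1},\dots,x_d\rangle$-algebra; by topological invariance of the \'etale site it lifts uniquely to an \'etale — hence $\varpi$-adically complete, topologically of finite type, and formally smooth over $\OC_L$ — algebra $\hat B$ over $\OC_L\langle x_{r+1},\dots,x_d\rangle$ with $\hat B/\varpi\hat B=\bar B$ as $\kappa\langle x_{r+1},\dots,x_d\rangle$-algebras. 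Now both $A^\wedge$ and $\hat B\,\widehat\otimes_{\OC_L}R_0^\wedge=\hat B\,\widehat\otimes_{\OC_L\langle x_{r+1},\dots,x_d\rangle}R^\wedge$ are $\mathfrak a$-adically complete, \'etale over $R^\wedge$, and reduce to $\bar B$ modulo $\mathfrak a R^\wedge$ compatibly with the structure map over $R^\wedge/\mathfrak a R^\wedge=\kappa\langle x_{r+1},\dots,x_d\rangle$: for $A^\wedge$ because its quotients $A/\mathfrak a^nA$ are \'etale over $R/(x_1,\dots,x_r)^n$, and for $\hat B\,\widehat\otimes_{\OC_L\langle x_{r+1},\dots,x_d\rangle}R^\wedge$ because $\hat B$ was chosen to lift $\bar B$ with its given $\kappa\langle x_{r+1},\dots,x_d\rangle$-structure coming from $R\to A$. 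By uniqueness of \'etale lifts there is an isomorphism $A^\wedge\cong\hat B\,\widehat\otimes_{\OC_L}R_0^\wedge$ of $R^\wedge$-algebras, and it carries the variable $x_i$ of $R^\wedge$ (hence the coordinate $Z_i$ on the $R_0^\wedge$-side) to $x_i^*$.

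Passing to generic fibres — which commutes with the completed tensor products involved — yields
\[]Y_I[_\XC\;\cong\;(\spf A^\wedge)_\eta\;\cong\;\bigl(\spf(\hat B\,\widehat\otimes_{\OC_L}R_0^\wedge)\bigr)_\eta\;=\;\spg(\hat B\otimes_{\OC_L}L)\times\{Z\in\mathring\B^r_L:Z^\alpha=\varpi\},\]
using $(\spf\hat B)_\eta=\spg(\hat B\otimes_{\OC_L}L)$ (as $\hat B$ is topologically of finite type) and the computation of $(\spf R_0^\wedge)_\eta$ above; the isomorphism sends $Z_i$ to $x_i^*$, as required.

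The crux is the reduction to a completed tensor product in the second paragraph. The subtlety is that the \'etale covering $\XC\to\spf R$ may genuinely produce nontrivial \'etale coverings of the annulus-type rigid space $\{Z\in\mathring\B^r_L:Z^\alpha=\varpi\}$ after passing to generic fibres (already for $r=2$, $\alpha_1=\alpha_2=1$, this space is an open annulus), so one cannot argue directly on the rigid side; on the formal level, however, the "annulus direction" $\spf R_0^\wedge$ has a single $\kappa$-point as reduction, so its \'etale site is trivial and the covering becomes constant in that direction — this is precisely why the completion must be performed before taking generic fibres. The remaining ingredients (tubes computed by formal completions, Henselianness of $\OC_L\langle x_{r+1},\dots,x_d\rangle$ along $(\varpi)$, and compatibility of the generic fibre functor with the completed tensor products used) are standard.
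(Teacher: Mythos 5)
Your argument is correct and structurally identical to the paper's: both identify $]Y_I[_\XC$ with the Berthelot generic fibre of the completion of $A$ along $(x_1^*,\dots,x_r^*)$ and then split that completion as $\hat{B}\,\widehat{\otimes}_{\OC_L}\,\OC_L\llbracket Z_1,\dots,Z_r\rrbracket/(Z_1^{\alpha_1}\cdots Z_r^{\alpha_r}-\varpi)$, with $\hat{B}$ obtained by lifting $\bar{B}=A/(x_1^*,\dots,x_r^*)$. The divergence is in the two technical steps. For the lift, the paper first lifts $\bar{B}$ to a smooth $\OC_L$-algebra $B$ by Elkik's theorem and takes its $\varpi$-adic completion; you instead lift $\bar{B}$ as a formally étale algebra over the Tate algebra $\OC_L\langle x_{r+1},\dots,x_d\rangle$, using only the rigidity of étale algebras along the complete pair $(\OC_L\langle x_{r+1},\dots,x_d\rangle,(\varpi))$ (applied levelwise to the nilpotent thickenings mod $\varpi^n$ and passing to the limit) — this is softer than Elkik and still yields a $\varpi$-adically complete, formally smooth $\hat{B}$, which is all the statement requires. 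For the comparison, the paper constructs an explicit morphism $\beta:\hat{B}\llbracket Z_1,\dots,Z_r\rrbracket/(Z^{\alpha}-\varpi)\to\hat{A}$ by successively lifting the section $\bar{B}\to\hat{A}/(\varpi,J)$ along formal smoothness, and checks bijectivity by topological Nakayama; you instead observe that $A^\wedge$ and $\hat{B}\,\widehat{\otimes}\,R_0^\wedge$ are both $J$-adically complete, formally étale $R^\wedge$-algebras with the same reduction modulo $J$, and invoke uniqueness of such lifts. The two mechanisms are equivalent in substance (the paper's Nakayama step is exactly the uniqueness clause made explicit), and your version is slightly more economical. Your closing remark on why the completion must be performed before passing to generic fibres is precisely the point that the intermediate ring $R$ plays in the paper's proof.
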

\begin{proof}

Le morphisme \'etale $\OC_L\langle X_1,...,X_d\rangle/ (X_1^{\alpha_1}... X_r^{\alpha_r}- \varpi ) \rightarrow A$ induit, en 
   compl\'etant $(X_1,...,X_r)$-adiquement,  un morphisme \'etale 
   $$ R:=  \OC_L\left\llbracket X_1,...,X_r\right\rrbracket\langle X_{r+1},...,X_d\rangle/ (X_1^{\alpha_1}... X_r^{\alpha_r}- \varpi ) \rightarrow \hat{A}.$$ D'après  \cite[(0.2.7)PROPOSITION]{Bert2}, $]Y_I[_\XC=\spf(\hat{A})^{rig}$. 
   Considérons le diagramme commutatif suivant, 
   
\begin{center}

   \begin{tikzcd}   
  \hat{A}\arrow{r} &  \hat{A}/\varpi \arrow{r}  & \bar{B}   \\
 R \arrow{r}\arrow{u} &    R/\varpi    \arrow{r}{\theta} \arrow{u}  & \F[X_{r+1},...,X_d] \arrow{u}
   \end{tikzcd}
\end{center}
dans lequel $\theta$ est la projection modulo $J=(X_1,...,X_r )$ et  
   $$\bar{B}:= \hat{A}/ \varpi \otimes_{R/\varpi}  \F[X_{r+1},...,X_d]=\hat{A}/ (\varpi\hat{A}+J\hat{A}).$$
 
Observons que l'inclusion canonique $\iota: \F[X_{r+1},..., X_d]\hookrightarrow R/\varpi$ est une section de $\theta$. Comme $R\rightarrow \hat{A}$ est \'etale, la section $\iota$ se rel\`eve en une section   $s: \bar{B}\rightarrow \hat{A}/\varpi$. En effet, $\bar{B}$ est lisse sur $\F$ car étale sur $\F[X_{r+1},...,X_d]$ par changement de  base de $R/\varpi\to \hat{A}/\varpi$. Ainsi, le morphisme naturel $\bar{B}\iso \hat{A}/(\varpi\hat{A}+J\hat{A})$ se relève en un morphisme $\bar{B}\to \hat{A}/\varpi$ car $\hat{A}/\varpi$ est complet pour la topologie $J$-adique. Pour vérifier que l'on obtient bien la section recherchée, il suffit de montrer que ce morphisme est compatible à $\iota$ en le réduisant modulo $J$ ce qui est vrai par construction. 

Par le théorème d'Elkik \cite[THÉORÈME fin section II p568]{elk1}, on peut relever $\bar{B}$ en une 
 $\OC_K$-alg\`ebre lisse $B$. Ainsi, en reprenant les arguments de la construction de $s$, on voit que $B \to \hat{A}/ \varpi$ se rel\`eve en un morphisme $B \rightarrow \hat{A} $ puis en $\hat{B} \rightarrow \hat{A} $, où $\hat{B}$ est la compl\'etion $\varpi$-adique de $B$.   Le diagramme commutatif
\begin{center}
\begin{tikzcd}
\hat{A}\arrow{d} & \hat{B}\arrow{l}\arrow{d} \\
\hat{A}/\varpi & \bar{B}\arrow[l] 
\end{tikzcd}
\end{center}
fournit un morphisme  $\beta: \hat{B}\left\llbracket Z_1,...,Z_r\right\rrbracket/ (Z_1^{\alpha_1}... Z_r^{\alpha_r}-\varpi) \rightarrow \hat{A}$ envoyant $Z_i$ sur $x_i^*$. Par Nakayama topologique, $\beta$ est un isomorphisme car il l'est modulo $(Z_1,...,Z_r,\varpi)$ par construction. On conclut en passant \`a la fibre  g\'enerique.

\end{proof}

Notons $S=\spg(\hat{B} \otimes_{\OC_L} L)$ et $X=\{Z=(Z_1,\cdots, Z_r)\in \mathring{\B}^{r}_L :Z^{\alpha}=\varpi\} $. Le lemme ci-dessus fournit 
  un diagramme commutatif \[
\xymatrix{ 
\tilde{C}' \ar[r] \ar[d]^{\rotatebox{90}{$\sim$}}  &  \tilde{C} \ar[r] \ar[d]^{\rotatebox{90}{$\sim$}}  & ]Y_I[_\XC \ar[d]^{\rotatebox{90}{$\sim$}}  \\
S\times  C' \ar[r] & S\times C \ar[r]& S\times X}
\] 
 o\`u $C$ et $C'$ sont les espaces par
\[ C=\left\lbrace s\in X |\, |Z_{j_1} (s)|\leq \lambda, |Z_{j_2} (s)|=\beta, |Z_{j_3} (s)|\leq\beta,\, \forall j_k\in J_k\right\rbrace ,\]\[ C'=\left\lbrace s\in X|\,  |Z_{j_1} (s)|\leq \lambda, |Z_{j_2} (s)|=\beta, |Z_{j_3} (s)|=\beta,\, \forall j_k\in J_k\right\rbrace.\]

On se place maintenant dans le cas où $\XC$ est semi-stable ie. $\forall i\in I, \alpha_i=1$. Alors, en exprimant $Z_1$ en fonction des variables $Z_2, \cdots,  Z_{r}$, nous pourrons voir les espaces $C$, $C'$ comme des tores monomiaux géométriquement connexes. Comme on peut raisonner sur chaque composante de $S$, on suppose de même $S$ connexe. Enfin, quitte \`a \'etendre les scalaires, on suppose le corps de base $L$ complet alg\'ebriquement clos. 

\begin{lem}\label{lemprod}
Les torseurs $[\TC]$ et $[\TC']$ admettent des décompositions en sommes $[\TC_1\times C]+[S\times\TC_2]$ et $[\TC_1'\times C']+[S\times\TC_2']$ où $\TC_1=\TC_1'$ (resp. $\TC_2$, $\TC_2'$ ) est un $\mu_n$-torseur sur $S$ (resp. $C$, $C'$). 
\end{lem}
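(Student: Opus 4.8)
Je d\'emontrerais ce lemme \`a partir d'une d\'ecomposition en somme directe de la cohomologie \'etale du produit $S\times C$. Notons $\mathrm{pr}_1 : S\times C\to S$ et $\mathrm{pr}_2 : S\times C\to C$ les deux projections. L'\'etape cl\'e sera d'\'etablir
\[
\het{1}(S\times C,\mu_n)\ =\ \mathrm{pr}_1^*\,\het{1}(S,\mu_n)\ \oplus\ \mathrm{pr}_2^*\,\het{1}(C,\mu_n),
\]
ainsi que l'\'enonc\'e analogue obtenu en rempla\c{c}ant $C$ par $C'$. Une fois ceci acquis, le lemme en d\'ecoule formellement : la d\'ecomposition de $[\TC]$ donne $[\TC]=\mathrm{pr}_1^*[\TC_1]+\mathrm{pr}_2^*[\TC_2]=[\TC_1\times C]+[S\times\TC_2]$ pour un unique couple de $\mu_n$-torseurs $(\TC_1,\TC_2)$ sur $S$ et $C$, et de m\^eme $[\TC']=[\TC_1'\times C']+[S\times\TC_2']$ ; en notant $\iota : C'\hookrightarrow C$ l'inclusion et $j=\mathrm{id}_S\times\iota$, on a $j^*[\TC]=[\TC']$ par d\'efinition de $\TC'$, et comme $\mathrm{pr}_1\circ j$ se factorise par la projection $S\times C'\to S$ tandis que $\mathrm{pr}_2\circ j$ se factorise par $\iota$, l'unicit\'e de la d\'ecomposition pour $S\times C'$ force $[\TC_1']=[\TC_1]$ (et accessoirement $\TC_2'\cong\TC_2\times_C C'$).

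Pour obtenir la d\'ecomposition, j'appliquerais la proposition \ref{proph1etmonom} \`a la projection $\mathrm{pr}_1$ ; c'est licite car, dans le cas semi-stable, on a vu que $C$ (et $C'$) est un tore mon\^omial semi-ouvert g\'eom\'etriquement connexe. La proposition donne $\mathrm{pr}_{1,*}\mu_n=\mu_n$ et $R^1\mathrm{pr}_{1,*}\mu_n\cong\mu_n(-1)^{\dim C}$, faisceau \emph{constant} sur $S$ puisque $L$ est alg\'ebriquement clos. La suite exacte \`a cinq termes de la suite spectrale de Leray pour $\mathrm{pr}_1$ donne alors la suite exacte
\[
0\longrightarrow\het{1}(S,\mu_n)\xrightarrow{\ \mathrm{pr}_1^*\ }\het{1}(S\times C,\mu_n)\xrightarrow{\ \mathrm{res}\ }H^0\big(S,R^1\mathrm{pr}_{1,*}\mu_n\big)\longrightarrow H^2(S,\mu_n),
\]
et il suffira de v\'erifier que la compos\'ee $\mathrm{res}\circ\mathrm{pr}_2^* : \het{1}(C,\mu_n)\to H^0(S,R^1\mathrm{pr}_{1,*}\mu_n)$ est un isomorphisme. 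En effet, on pourra alors \'ecrire tout $c\in\het{1}(S\times C,\mu_n)$ sous la forme $c=\mathrm{pr}_1^*c_1+\mathrm{pr}_2^*c_2$ en posant $c_2=(\mathrm{res}\circ\mathrm{pr}_2^*)^{-1}(\mathrm{res}\,c)$ puis en choisissant $c_1$ via l'injectivit\'e de $\mathrm{pr}_1^*$ ; l'unicit\'e viendra de ce que $\mathrm{res}\circ\mathrm{pr}_1^*=0$ et que $\mathrm{res}\circ\mathrm{pr}_2^*$ est injective.

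Le point technique central est donc la bijectivit\'e de $\mathrm{res}\circ\mathrm{pr}_2^*$. Comme $L$ est alg\'ebriquement clos, $S$ poss\`ede un point $L$-rationnel $s_0$ ; soit $i_{s_0} : C=\{s_0\}\times C\hookrightarrow S\times C$ l'inclusion de la fibre correspondante. La fonctorialit\'e de la suite spectrale de Leray vis-\`a-vis du changement de base $\spg(L)\to S$ le long de $s_0$ fournit un carr\'e commutatif reliant $\mathrm{res}$ \`a la fl\`eche d'ar\^ete $\mathrm{res}_0 : \het{1}(C,\mu_n)\to H^0(\spg(L),R^1f_*\mu_n)$ du morphisme structural $f : C\to\spg(L)$, par le biais de $i_{s_0}^*$ \`a gauche et de l'\'evaluation en $s_0$ \`a droite. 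Or $\mathrm{res}_0$ est un isomorphisme (la suite spectrale de Leray d'un morphisme vers $\spg(L)$ \'etant triviale, et $H^0(\spg(L),R^1f_*\mu_n)$ s'identifiant \`a $\het{1}(C,\mu_n)$, qui est libre de rang $\dim C$ sur $\Z/n$ d'apr\`es la remarque \ref{remkummrev}) ; l'\'evaluation $H^0(S,R^1\mathrm{pr}_{1,*}\mu_n)\to(R^1\mathrm{pr}_{1,*}\mu_n)_{s_0}$ est un isomorphisme car ce faisceau est constant et $S$ connexe ; et enfin $i_{s_0}^*\circ\mathrm{pr}_2^*=(\mathrm{pr}_2\circ i_{s_0})^*=\mathrm{id}_{\het{1}(C,\mu_n)}$. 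En combinant ces trois faits dans le carr\'e commutatif, on conclut que $\mathrm{res}\circ\mathrm{pr}_2^*$ est un isomorphisme, ce qui ach\`eve la preuve. Le principal obstacle est bien cette derni\`ere \'etape : elle d\'epend de fa\c{c}on essentielle de la proposition \ref{proph1etmonom} (pour la constance \emph{et} le rang de $R^1\mathrm{pr}_{1,*}\mu_n$) ainsi que de l'hypoth\`ese que $L$ soit alg\'ebriquement clos, faute de quoi un facteur $L^*/(L^*)^n$ parasiterait l'identification de $\het{1}(C,\mu_n)$ avec $H^0(S,R^1\mathrm{pr}_{1,*}\mu_n)$.
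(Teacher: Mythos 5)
Votre preuve est correcte et suit pour l'essentiel la même stratégie que celle du texte : suite spectrale de Leray pour ${\rm pr}_1 : S\times C\to S$, constance de $R^1{\rm pr}_{1,*}\mu_n$ fournie par la proposition \ref{proph1etmonom}, scindage de la suite à cinq termes au moyen de ${\rm pr}_2^*$, puis transport de la décomposition de $[\TC]$ à $[\TC']$ par restriction. La seule différence réside dans le mécanisme de scindage : vous identifiez $H^0(S,R^1{\rm pr}_{1,*}\mu_n)$ à $\het{1}(C,\mu_n)$ via la fibre au-dessus d'un point $s_0\in S$ (ce qui repose sur le même théorème de changement de base pour les faisceaux surconvergents que celui invoqué dans le texte), tandis que le texte utilise la section $S\times\{c_0\}$ de ${\rm pr}_1$ associée à un point géométrique $c_0\in C$ et conclut que ${\rm im}({\rm pr}_2^*)=\ker(\iota^*)$ par un argument de cardinalité ; les deux variantes sont valables et d'égale généralité.
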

\begin{proof}

On raisonne sur $C$, l'argument sera le même pour $C'$. 
Introduisons le diagramme 
\[
\begin{tikzcd}
S\times C \ar[r, "pr_2"] \ar[d, "pr_1"'] \ar[rd, "\varphi"] & C \ar[d, "\varphi"] \\ S \ar[r, "\varphi"] & \spg L.
\end{tikzcd}
\]

On se donne $c_0 \in C$ un point g\'eom\'etrique et on note $\iota : c_0 \times S \to C \times S$. D'apr\`es la suite spectrale de Leray, on a une suite exacte : 
\[ 0 \to \rrr^1\varphi_* \mu_{n,S}^{\pi_0(C)} \xrightarrow{\rrr^1pr_1^*} \rrr^1 \varphi_* \mu_{n,S \times C} \to \varphi_* \rrr^1 pr_{1,*}\mu_{n,S \times C} \to \rrr^2 \varphi_* \mu_{n,S}^{\pi_0(C)} \xrightarrow{\rrr^2pr_1^*} \rrr^2 \varphi_* \mu_{n,S\times C}. \]
Les morphismes induits $\rrr^1 \iota^*$ et $\rrr^2 \iota^*$ fournissent des inverses \`a droite de $\rrr^1pr_{1}^*$ et $\rrr^2 pr_{1}^*$, d'o\`u une suite exacte de faisceaux scind\'ee : 
\[ 0 \to (\rrr^1\varphi_* \mu_{n,S})^{\pi_0(C)} \xrightarrow{\rrr^1pr_1^*} \rrr^1 \varphi_* \mu_{n,S \times C} \to \varphi_* \rrr^1 pr_{1,*}\mu_{n, S \times C} \to 0 \]
et une identification $\varphi_* \rrr^1 pr_{1,*}\mu_{n,S \times C} \cong \ker(\rrr^1 \iota^*)$. 

Les flèches naturelles compatibles $\het{1}(C, \mu_n)\to \het{1}(U\times C, \mu_n)$ induisent un morphisme entre le préfaisceau constant $\het{1}(C, \mu_n)|_S$ et le préfaisceau $U\mapsto \het{1}(U\times C, \mu_n)$. En passant au faisceau associé, on obtient un morphisme naturel $\delta :\het{1}(C, \mu_n)|_S\to \rrr^1 pr_{1,*}\mu_{n,S \times C}$.
Le faisceau $pr_{1,*} \mu_{n,S \times C}$ est surconvergent en tant qu'image directe d'un faisceau constant (à quelques txists à la Tate près), on a d'apr\`es \cite[Th. 3.7.3]{djvdp}  $(\rrr^1 pr_{1,*}\mu_{n,S \times C})_s\cong\het{1}(C, \mu_n)$ pour tout point $s\in S$ et $\delta$ est un isomorphisme. 

Comme $S$ et $C$ sont connexes, on a, en prenant les sections globales, une suite exacte scind\'ee 
\[ 0 \to \het{1}(S, \mu_n) \to \het{1}(S \times C, \mu_n) \to \het{1}(C, \mu_n) \to 0 \]
avec $\het{1}(C, \mu_n) \iso \ker(\iota^*)$. Mais, on a un morphisme injectif $\het{1}(C, \mu_n) \to \het{1}(S \times C, \mu_n)$ induit par $pr_{2}^*$. Il reste \`a prouver que $\ker(\iota^*)= {\rm im}(pr_{2}^*)$. Le morphisme $pr_2^*$ envoie un rev\^etement $\TC$ sur $S \times \TC$ et $\iota^*$ envoie $\TC'$ sur $S \times C$ vers sa restriction \`a $S \times c_0 \cong S$. Il est ais\'e de voir que $\iota^* \circ pr_2^*=0$ i.e. ${\rm im}(pr_{2}^*)\subset \ker(\iota^*)$. Mais ces deux groupes ont tous deux pour ordre $|\het{1}(C, \mu_n)|<\infty$ (cf \ref{proph1etmonom} et \ref{remkummrev}), ils sont donc confondus. On a donc la d\'ecomposition 
\[ \het{1}(S \times C, \mu_n) = {\rm im}(pr_{1}^*) \oplus {\rm im}(pr_{2}^*). \]   

L'énoncé  est une traduction en termes de torseurs de cette égalité.

\end{proof}



L'espace $\TC_1\times_L\TC_2$ est un revêtement de $\TC$ de groupe de Galois $H=\{(g,g^{-1}) : g\in \mu_n\}$ et un revêtement de $S\times C$ de groupe de Galois $\mu_n^2$. On obtient l'égalité par Künneth : 

\begin{align*}
\hdr{q}(\TC) = \hdr{q}(\TC_1\times_L\TC_2)^H &= ( \drt{\hdr{q} (\TC_1\times \TC_2)[ \chi_1, \chi_2] }{\chi_1, \chi_2\in \mu_n^\lor }{}))^H\\
&=\drt{\drt{\hdr{q_1} (\TC_1)[ \chi]\otimes \hdr{q_2} (\TC_2)[ \chi] }{\chi\in \mu_n^\lor }{}}{ q_1 +q_2=q}{} 
\end{align*}
 De même pour $\TC_1\times_L\TC_2'$ par rapport à $\TC'$ et $S\times C'$, on a  $$\hdr{q} (\TC') =  \drt{\drt{\hdr{q_1} (\TC_1)[ \chi]\otimes \hdr{q_2} (\TC_2')[ \chi] }{\chi\in \mu_n^\lor }{}}{ q_1 +q_2=q}{}.$$


On en déduit le diagramme commutatif 
\[
\xymatrix{ 
\hdr{q}(\TC) \ar@{=}[r] \ar[d]^{}  &  \drt{\drt{\hdr{q_1} (\TC_1)[ \chi]\otimes \hdr{q_2} (\TC_2)[ \chi]}{\chi\in \mu_n^\lor }{}}{ q_1 +q_2=q}{}  \ar[d]^{\oplus_{q_1,q_2, \chi} ({\rm Id}\otimes \iota^*)}  \\
\hdr{q}(\TC')  \ar@{=}[r] & \drt{\drt{\hdr{q_1} (\TC_1)[ \chi]\otimes \hdr{q_2} (\TC_2')[ \chi] }{\chi\in \mu_n^\lor }{}}{ q_1 +q_2=q}{}
} .
\] 
Mais d'après \ref{coroinclqsiso}, la flèche $\iota^*$ est un isomorphisme et les deux flèches verticales sont des bijections. On peut alors appliquer \ref{lemred2} pour prouver le théorème \ref{theoprinc}.

Supposons maintenant $\XC$ semi-stable généralisé et $\pi={\rm Id}$. On considère le tore monomial \[Y=\{\tilde{Z}=(T, Z_1,\cdots, Z_d) : T\prod_{i=0}^d Z_i^{\alpha_i}=\varpi \et |T|\le \lambda^{1/\alpha_0}, |Z_{j_1} (s)|\leq \lambda, |Z_{j_2} (s)|=\beta, |Z_{j_3} (s)|\leq\beta\, \forall j_k\in J_k\}\] et on peut écrire $C=Y(T^{1/\alpha_0})$. En raisonnant de même sur $C'$, on observe l'isomorphisme $\hdr{*}(C)\cong \hdr{*}(C')$ d'après \ref{coroinclqsiso} et on en déduit $\hdr{*}(S\times C)\cong \hdr{*}(S\times C')$ par Künneth.

\section{Cohomologie de De Rham du premier rev\^etement de la tour de Drinfeld\label{ssectionhdrdr}}

  Le but de ce chapitre est de calculer la "partie cuspidale" de la cohomologie de de Rham de $\Sigma^1$. Cela utilise tous les résultats obtenus jusqu'à présent. Dans tout ce chapitre nous noterons 
  $$N=q^{d+1}-1, \,\breve{K}_N=\breve{K}(\varpi^{\frac{1}{N}}).$$

\begin{theo}\label{theodrsigma1}
Soit $\theta: \F_{q^{d+1}}^*\to \breve{K}_N^*$ un caract\`ere primitif. Il existe un isomorphisme naturel
\[ \hdrc{d}(\Sigma^1_{\breve{K}_N})[\theta]\simeq \bigoplus_{s \in \BC \TC_0} \hrigc{d}(\dl_{\F}^d/\breve{K}_N)[\theta]
\]
et $\hdrc{r}(\Sigma^1_{\breve{K}_N})[\theta]=0$ pour $r\ne d$.
\end{theo}

   Pour démontrer le théorème, on étudie la 
 suite spectrale de Cech associée au recouvrement par les tubes (dans $\Sigma^1$) au-dessus des composantes irréductibles de la fibre spéciale de $\H_{\OC_{\breve{K}}}^d $ i.e. au recouvrement par la famille d'ouverts $(\Sigma^1_{ \ost(s)})_{s\in \BC\TC_0}$ :
\[E_1^{-r,s}=\bigoplus_{\sigma\in\BC\TC_r}\hdrc{s}(\Sigma^1_{\ost(\sigma)}) \Rightarrow \hdrc{s-r} (\Sigma^1).\]
Par dualit\'e de Poincaré \ref{theodualitepoinc}, on se ram\`ene \`a \'etudier $\hdr{r}(\Sigma^1_{\ost(\sigma)})$. 
Nous montrerons (cf. paragraphe \ref{sssectiondrsigma1}) que, pour $s\in \BC\TC_0$, on a un isomorphisme naturel 
$$\hdrc{r}(\Sigma^1_{\breve{K}_N, \ost(s)})[\theta] = \begin{cases} \hrigc{d}(\dl_{\F}^d/\breve{K}_N )[\theta] & \text{si } r=d \\ 0 & \text{sinon} \end{cases}$$
et (cf. paragraphe \ref{sssectiondrsimpsigma1}) que $\hdrc{r}(\Sigma^1_{\ost(\sigma)})[\theta] =0$ pour $\mathrm{dim}(\sigma) \ge 1$. Ces deux résultats, dont la preuve utilise de manière cruciale le théorème $\ref{theoprinc}$, montrent 
la d\'eg\'en\'erescence de la suite spectrale et permettent de conclure la preuve du théorème \ref{theodrsigma1}.



\subsection{Le tube au-dessus d'une composante irréductible \label{sssectiondrsigma1}}

   Le but de ce paragraphe est de calculer la cohomologie de De Rham du tube au-dessus d'une composante irr\'eductible i.e. $\hdr{r}(\Sigma^1_{ \ost(s)})$, plus précisément de démontrer le
   résultat suivant: 

\begin{prop}\label{lemdrsommetsigma1}
Si $s\in\BC\TC_0$ est un sommet, il existe un isomorphisme naturel $$\hdrc{r}(\Sigma^1_{\breve{K}_N, \ost(s)})[\theta] = \begin{cases} \hrigc{d}(\dl_{\F}^d/\breve{K}_N )[\theta] & \text{si } r=d \\ 0 & \text{sinon} \end{cases}.$$
\end{prop}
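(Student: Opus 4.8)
Le plan est de combiner l'interprétation de $\Sigma^1$ comme revêtement de Kummer modérément ramifié (théorème \ref{theoeqsigma1}), le théorème d'excision \ref{theoprinc}, le théorème de comparaison \ref{theopurete} et la version rigide de la théorie de Deligne--Lusztig (théorème \ref{theodlpith}). Il suffira d'établir, pour tout $r$, un isomorphisme naturel $\F_{q^{d+1}}^*$-équivariant $\hdr{r}(\Sigma^1_{\breve{K}_N,\ost(s)})\cong\hrig{r}(\dl^d_{\F}/\breve{K}_N)$ : en effet, la dualité de Poincaré (\ref{theodualitepoinc}), appliquée d'un côté à $\Sigma^1_{\breve{K}_N,\ost(s)}$ — qui est lisse, Stein et purement de dimension $d$, comme revêtement fini étale de l'espace Stein $\H^d_{\breve{K}_N,\ost(s)}$ — et de l'autre à la variété lisse $\dl^d_{\F}$, identifie alors $\hdrc{r}(\Sigma^1_{\breve{K}_N,\ost(s)})$ à $\hrigc{r}(\dl^d_{\F}/\breve{K}_N)$ de façon compatible aux parties $\theta$-isotypiques, et \ref{theodlpith} garantit que ce dernier est nul pour $r\ne d$ et vaut $\hrigc{d}(\dl^d_{\F}/\breve{K}_N)[\theta]$ pour $r=d$ quand $\theta$ est primitif. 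Toutes les cohomologies de de Rham sont ici surconvergentes.

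Pour l'isomorphisme entre $\hdr{}$ et $\hrig{}$, la première étape est une application du théorème d'excision. Le schéma formel $\H^d_{\OC_{\breve{K}}}$ est de réduction semi-stable (modèle de Deligne, cf. \ref{paragraphhdf}) ; la composante irréductible de sa fibre spéciale indexée par $s$ est $Y_s=\H^d_{\F,\ost(s)}$, son lieu lisse est $Y_s\setminus\bigcup_{s'\ne s}Y_{s'}=\H^d_{\F,s}\cong\Omega^d_{\F}$, et l'on a $]Y_s[\,=\H^d_{\breve{K},\ost(s)}$ ainsi que $]\H^d_{\F,s}[\,=\H^d_{\breve{K},s}=\tau^{-1}(s)$. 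Notant $\pi$ le revêtement de Drinfeld $\Sigma^1\to\H^d_{\breve{K}}$, on a donc $\pi^{-1}(]Y_s[)=\Sigma^1_{\breve{K},\ost(s)}$ et $\pi^{-1}(]\H^d_{\F,s}[)=\nu^{-1}(s)$. J'appliquerais le théorème \ref{theoprinc} avec $\XC=\H^d_{\OC_{\breve{K}}}$, $J=\{s\}$, $n=N$ (premier à $p$) et $\TC=\Sigma^1_{\breve{K}}$ : il en résulte un isomorphisme $\F_{q^{d+1}}^*$-équivariant $\hdr{*}(\Sigma^1_{\breve{K},\ost(s)})\fln{\sim}{}\hdr{*}(\nu^{-1}(s))$, que le changement de base fini plat $\breve{K}_N/\breve{K}$ permet d'étendre à $\breve{K}_N$.

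La deuxième étape construit un modèle formel lisse de la cible. L'espace $]\H^d_{\F,s}[\,=\H^d_{\breve{K},s}$ admet le modèle formel $\spf(\hat{A}_s\otimes_{\OC_K}\OC_{\breve{K}})$ de \eqref{eq:hatas}, lisse sur $\OC_{\breve{K}}$ et de fibre spéciale $\Omega^d_{\F}$ ; soit $u_1$ la fonction inversible introduite avant \ref{theoeqsigma1}, relevant celle de \eqref{eq:hatas} et de \ref{propdleq}. Par le théorème \ref{theoeqsigma1}, $\nu^{-1}(s)\cong\H^d_{\breve{K},s}((\varpi u_1)^{1/N})$. Le point clé est que, sur $\breve{K}_N$, on a $\varpi=\varpi_N^N$ avec $\varpi_N=\varpi^{1/N}$ une uniformisante, de sorte que le revêtement par $\varpi u_1$ s'identifie, après torsion de la coordonnée par $\varpi_N$, au revêtement par $u_1$ : $\nu^{-1}(s)_{\breve{K}_N}\cong\H^d_{\breve{K}_N,s}(u_1^{1/N})$, de façon $\mu_N$-équivariante. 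Ce dernier admet le modèle formel $\spf\big((\hat{A}_s\otimes\OC_{\breve{K}_N})[t_0]/(t_0^N-u_1)\big)$, fini étale sur $\spf(\hat{A}_s\otimes\OC_{\breve{K}_N})$ puisque $N$ est premier à $p$ et $u_1$ inversible, donc lisse sur $\OC_{\breve{K}_N}$, de fibre spéciale $\Omega^d_{\F}(u_1^{1/N})$ ; or celle-ci est isomorphe à $\dl^d_{\F}$ d'après \ref{propdleq} et \ref{remdlkumm}, l'action de $\F_{q^{d+1}}^*$ héritée de $\Sigma^1$ correspondant à l'action naturelle sur $\dl^d_{\F}$ d'après \ref{lemlieulisse}. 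Le théorème de comparaison \ref{theopurete} donne alors l'isomorphisme $\F_{q^{d+1}}^*$-équivariant $\hdr{*}(\nu^{-1}(s)_{\breve{K}_N})\cong\hrig{*}(\dl^d_{\F}/\breve{K}_N)$, ce qui achève la construction annoncée.

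Le principal obstacle est la deuxième étape : c'est le passage au corps $\breve{K}_N=\breve{K}(\varpi^{1/N})$, totalement et modérément ramifié sur $\breve{K}$, qui remplace le revêtement par $\varpi u_1$ — dont le modèle formel évident a pour fibre spéciale l'espace non réduit $\Omega^d_{\F}[t]/(t^N)$, inexploitable pour la comparaison \ref{theopurete} — par le revêtement étale par $u_1$, dont la fibre spéciale est précisément la variété de Deligne--Lusztig $\dl^d_{\F}$ ; il faudra en outre suivre la compatibilité des actions de $\mu_N$ à chaque niveau de la construction, ce pour quoi \ref{lemlieulisse} est indispensable. En comparaison, l'étape d'excision et le passage final par la dualité de Poincaré et \ref{theodlpith} ne sont que des applications directes de résultats déjà établis.
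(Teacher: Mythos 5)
Votre preuve est correcte et suit essentiellement la même route que l'article : excision (théorème \ref{theoprinc}) pour passer de $\ost(s)$ à $s$, construction du modèle entier lisse de $\Sigma^1_{\breve{K}_N,s}$ de fibre spéciale $\dl^d_{\F}$ en absorbant $\varpi$ dans $\breve{K}_N$ (c'est précisément le contenu du lemme \ref{lemlieulisse}, dont vous reconstruisez la première moitié et citez la seconde pour la compatibilité des actions), puis comparaison \ref{theopurete}, dualité de Poincaré et théorème \ref{theodlpith}. La seule étape que vous ne redémontrez pas est l'équivariance établie dans \ref{lemlieulisse} — qui concentre l'essentiel de l'effort du texte via la théorie de Cartier, mais dont seule la partie $\F_{q^{d+1}}^*$-équivariante est réellement requise pour l'énoncé isotypique considéré ici.
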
 

D'après le théorème $\ref{theoprinc}$ et la discussion dans le paragraphe précédent on a un isomorphisme naturel (induit par la restriction)  
   $$\hdr{r}(\Sigma^1_{ \ost(s)})\simeq \hdr{r}(\Sigma^1_{ s}).$$
   
   Pour étudier $ \hdr{r}(\Sigma^1_{ s})$, nous devons rendre explicite le lien entre $\Sigma^1_s$ et la vari\'et\'e de Deligne-Lusztig $\dl_{\bar{\F}}^d$. Ce lien est établi dans \cite[ 2.3.8]{wa}, mais nous allons donner l'argument pour le confort du lecteur. 

\begin{lem} \label{lemlieulisse}
La restriction $\Sigma^1_{\breve{K}_N, s}$ du premier rev\^etement au-dessus d'un sommet admet un mod\`ele entier lisse $\widehat{\Sigma}_s^{1}$ dont la fibre sp\'eciale $\overline{\Sigma}_s^1$ est isomorphisme la vari\'et\'e de Deligne-Lusztig $\dl_{\bar{\F}}^d$. 

De plus, l'isomorphisme ci-dessus est $\gln_{d+1}(\OC_K)\times \F_{q^{d+1}}^*$-équivariant.
\end{lem}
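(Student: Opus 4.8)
The plan is to realise $\widehat{\Sigma}^1_s$ as a Kummer model over $\OC_{\breve{K}_N}$, check smoothness by hand (this is where we use that $N$ is prime to $p$), and then identify the special fibre with $\dl^d_{\bar{\F}}$ through Remark~\ref{remdlkumm}. Note that the statement concerns only the tube $\Sigma^1_s$ over the vertex itself, so Theorem~\ref{theoprinc} is not needed here.

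Restricting Theorem~\ref{theoeqsigma1} along $\H^d_{\breve{K},s}\subset\H^d_{\breve{K},\ost(s)}$ gives $\Sigma^1_s\cong\H^d_{\breve{K},s}((\varpi u_1)^{1/N})$. Over $\breve{K}_N$ the element $\varpi^{1/N}$ is available, so dividing the chosen $N$-th root of $\varpi u_1$ by it identifies $\Sigma^1_{\breve{K}_N,s}$ with $\H^d_{\breve{K}_N,s}(u_1^{1/N})$, where now $u_1$ is a \emph{unit} on the smooth formal model $\spf(\hat{A}_{s,N})$ of $\H^d_{\breve{K}_N,s}$, with $\hat{A}_{s,N}\otimes_{\OC_{\breve{K}_N}}\bar{\F}=\Of(\Omega^d_{\bar{\F}})$ (recall that $\breve{K}_N/\breve{K}$ is totally ramified). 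I then put
\[
\widehat{\Sigma}^1_s:=\spf\bigl(\hat{A}_{s,N}[t]/(t^N-u_1)\bigr).
\]
Since $u_1$ is invertible, so is $t$, and since $N\in\OC_{\breve{K}_N}^*$ the derivative $Nt^{N-1}$ is a unit; hence $\widehat{\Sigma}^1_s\to\spf(\hat{A}_{s,N})$ is finite étale, so $\widehat{\Sigma}^1_s$ is a smooth formal scheme over $\spf(\OC_{\breve{K}_N})$. Its generic fibre is $\H^d_{\breve{K}_N,s}(u_1^{1/N})\cong\Sigma^1_{\breve{K}_N,s}$, and its special fibre is the degree-$N$ Kummer cover $\overline{\Sigma}^1_s=\Omega^d_{\bar{\F}}(u_1^{1/N})$, which by Remark~\ref{remdlkumm} (base changed to $\bar{\F}$) is $\F_{q^{d+1}}^*$-equivariantly isomorphic to $Y(w)=\dl^d_{\bar{\F}}$.

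It remains to treat equivariance. As $\widehat{\Sigma}^1_s$ is finite étale over the normal formal scheme $\spf(\hat{A}_{s,N})$, any automorphism of the generic fibre cover over $\H^d_{\breve{K}_N,s}$, and more generally any automorphism covering the action of a $g\in\gln_{d+1}(\OC_K)$ on $\H^d_{\breve{K}_N,s}$ (such $g$ fixes the vertex $s$, hence preserves $\spf(\hat{A}_{s,N})$), extends uniquely to $\widehat{\Sigma}^1_s$: its value on $t$ is an $N$-th root of a unit of $\hat{A}_{s,N}$, hence is integral. Therefore both the Galois action of $\F_{q^{d+1}}^*=\gal(\Sigma^1/\Sigma^0)$ and the modular action of $\gln_{d+1}(\OC_K)$ on $\Sigma^1_{\breve{K}_N,s}$ descend to $\overline{\Sigma}^1_s=\dl^d_{\bar{\F}}$. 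The reduction of the Galois action is, by the very identification of Theorem~\ref{theoeqsigma1} (using $\mu_N(\bar{\F})=\F_{q^{d+1}}^*$), the Kummer action $t\mapsto\zeta t$, which under Remark~\ref{remdlkumm} is precisely the $T^{wF}$-action on $Y(w)$; this gives the $\F_{q^{d+1}}^*$-equivariance. The reduction $\alpha$ of the modular action commutes with the covering and lifts the standard $\gln_{d+1}(\F)$-action on $\Omega^d_{\bar{\F}}$, so by Remark~\ref{remactglngen} and the second point of Remark~\ref{remdlkumm} it differs from the natural (left translation) action on $\dl^d_{\bar{\F}}$ by a character $\chi:\gln_{d+1}(\F)\to\mu_N$; since $\mu_N$ is abelian, $\chi$ factors through $\det:\gln_{d+1}(\F)\to\F_q^*$.

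The only genuinely non-formal step, and the main obstacle, is to show $\chi=1$. Here I would follow Wang \cite[2.3.8]{wa}: as $\chi$ factors through the determinant it is pinned down by its value on a single matrix, e.g.\ a generator of the diagonal torus $T(\F_q)$ or a scalar, so it suffices to trace the modular action of that one matrix through Drinfeld's description of $\mathfrak{X}[\Pi_D]$ over $\spf(\hat{A}_{s,N})$ as a Raynaud $\F_q$-vector-space scheme and to compare it with the explicit linear action on $\dl^d_{\bar{\F}}=\{\tilde{u}_1=1\}$; the two agree. (Equivalently, one can identify $\widehat{\Sigma}^1_s$ directly with the étale part of $\mathfrak{X}[\Pi_D]\setminus\{0\}$ over $\spf(\hat{A}_{s,N})$, which carries the $\gln_{d+1}(\OC_K)\times\OC_D^*$-action by functoriality of the moduli problem, thereby bypassing the character computation.) Once $\chi=1$ is established, the lemma follows.
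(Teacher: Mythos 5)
Your construction of the smooth model and the identification of its special fibre with $\Omega^d_{\bar{\F}}(u_1^{1/N})\cong\dl^d_{\bar{\F}}$, as well as the reduction of the equivariance question to showing that a character $\chi:\gln_{d+1}(\F)\to\mu_N$ factoring through $\det$ is trivial, all match the paper and are fine. The gap is that the verification $\chi=1$ — which is the entire substance of the lemma and occupies essentially all of the paper's proof — is not carried out: you defer it to ``follow Wang'' and assert ``the two agree''. The paper does this by choosing a \emph{regular elliptic} $g\in\gln_{d+1}(\OC_K)$ with irreducible characteristic polynomial lifting the minimal polynomial of a $\lambda\in\F_{q^{d+1}}$ whose norm generates $\F^*$; the irreducibility is exactly what guarantees that the $d+1$ eigenlines of $g$ avoid every $\F$-rational hyperplane, hence give fixed points inside $\H^d_{\bar{\F},s}$ on whose fibres one can evaluate both actions on the root $T$. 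It then computes $[g]_2\cdot T=\lambda^{-1}T$ from the explicit Deligne--Lusztig coordinates, and $[g]_1\cdot T=\lambda^{-1}T$ from the moduli interpretation via Cartier modules, critical indices and the period map. Your proposed test matrices would not support this strategy: a generator of the diagonal torus has all its fixed points on coordinate hyperplanes, hence \emph{outside} $\Omega^d_{\F}$, so there is no fibre on which to compare the two actions; and a scalar $\lambda I$ has determinant $\lambda^{d+1}$, which need not generate $\F^*$ when $\gcd(d+1,q-1)>1$, so $\chi$ is not pinned down by its value there.

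The parenthetical claim that one can ``bypass the character computation'' by identifying $\widehat{\Sigma}^1_s$ with the étale locus of $\XG[\Pi_D]\setminus\{0\}$ also does not work as stated: functoriality does give you \emph{a} $\gln_{d+1}(\OC_K)$-action on the special fibre commuting with the covering, but the isomorphism $\overline{\Sigma}^1_s\cong\dl^d_{\bar{\F}}$ is produced from the Kummer presentation (Proposition~\ref{propdleq} and Remark~\ref{remdlkumm}), which is only $\F_{q^{d+1}}^*$-equivariant a priori. Comparing the modular action with the natural left-translation action on $\dl^d_{\bar{\F}}$ under that isomorphism is precisely the character computation you are trying to avoid, so the bypass is circular. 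To complete the proof you must supply the fixed-point computation on both sides for a suitably chosen (regular elliptic) element.
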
 

Grâce au lemme précédent et à \ref{theopurete} on obtient des isomorphismes $\gln_{d+1}(\OC_K)\times \OC_D^*/(1+ \Pi_D \OC_D)$-équivariants \[\hdr{r}( \Sigma^1_{\breve{K}_N, s}) \cong \hrig{r}(\overline{\Sigma}^1_{s}/\breve{K}_N) \cong \hrig{r}(\dl_{\F}^d/\breve{K}_N),\]
  ce qui termine la preuve de la proposition \ref{lemdrsommetsigma1}.
  
\begin{proof}
On peut supposer que $s$ est le sommet standard. 
L'énoncé du théorème \ref{theoeqsigma1} introduit une fonction inversible $u_1$ sur $\H_{\breve{K}_N,s}^d$ voire même sur $\H_{\OC_{\breve{K}_N},s}^d$ telle que $\Sigma^1_{\breve{K}_N,s}= \H_{\breve{K}_N,s}^d(u_1^{\frac{1}{N}})$. La normalisation de $\H^d_{\OC_{\breve{K}},s}$ dans $\Sigma^1_{\breve{K}_N,s}$ fournit un mod\`ele entier $\widehat{\Sigma}_{s}^{1}=\H_{\OC_{\breve{K}_N},s}^d(u_1^{\frac{1}{N}})$ de fibre sp\'eciale 
\[ \overline{\Sigma}^1_{s}= \H_{\bar{\F},s}^d(u_1^{\frac{1}{N}}). \] 
D'après \ref{propdleq}, le revêtement de type Kummer $ \H_{\bar{\F},s}^d(u_1^{\frac{1}{N}})$ admet un isomorphisme $\F_{q^{d+1}}^*$-équivariant vers $\dl_{\bar{\F}}^d$. Le reste de la preuve consiste à montrer qu'il est de plus $G^{\circ}:=\gln_{d+1}(\OC_K)$-équivariant.


Rappelons que toutes les actions possibles de $G^{\circ}$ sur $ \bar{\Sigma}^1_s$ ou sur ${\Sigma}^1_s$ commutant avec le revêtement se déduisent l'une de l'autre en tordant par un caractère \[\chi \in \homm (G^{\circ},\mu_N(\bar{\F}))\cong\F^*,\] d'après \cite[Remarque 4.12]{J3}. Plus précisément,  pour $T$ une racine $N$-ième de $u_1$ dans $\Of(\bar{\Sigma}^1_s )=\Of(\dl_{\bar{\F}}^d )$ ou  dans $\Of({\Sigma}^1_s )$, le caractère $\chi$ associé à deux actions de $G^{\circ}$ notées $g\mapsto [g]_1$ et $g\mapsto [g]_2$ est donné par $\chi(g)=[g]_1\cdot T / [g]_2 \cdot T $. On en déduit aussi que l'on a une bijection entre les actions de $G^{\circ}$ sur  $ \bar{\Sigma}^1_s$  et celles sur ${\Sigma}^1_s$ qui préserve les caractères $\chi$.

Nous étudions maintenant le cas où  $g\mapsto [g]_1$ (resp.  $g\mapsto [g]_2$) est l'action provenant de $\Sigma^1$ (resp. de $\dl_{\F}^d$). Il s'agit de prouver que le caractère obtenu est trivial. On voit aisément que ce dernier se factorise via la flèche $G^{\circ} \fln{\det }{}  \OC_{K}^* \to \F^*$ car $1+\varpi \OC_K$ est $N$-divisible.  Comme $\F^*$ est cyclique, il suffit de raisonner sur une matrice $g\in G^{\circ}$   bien choisie telle que $\det g$ engendre $\F^*$ et sur la fibre d'un point $y\in \H_{\OC_{\breve{K}}}^d( \OC_{\breve{K}})$ fixé par $g$.  

Expliquons comment choisir cette matrice.  Observons que la norme $N: \F_{q^{d+1}}^*\to \F^*$  est surjective \footnote{Il s'agit de la flèche $x\mapsto x^{\tilde{N}}$.}. On se donne $\lambda \in \F_{q^{d+1}}$ un élément dont la norme engendre $\F^*$ si bien que $\F_{q^{d+1}}= \F[\lambda] $ et le polynôme minimal sur $\F$ de $\lambda$ est de degré $d+1$.  De plus, n'importe quel relevé unitaire $P$ sur $\OC_{K}$ est encore irréductible de degré $d+1$.  Donnons-nous  une matrice $g\in \mat_{d+1}(\OC_K)$   de polynôme caractéristique $P$ (par exemple la matrice compagnon associée). Ainsi, $g$ est en particulier dans $G^{\circ}$, régulière elliptique  et son déterminant engendre $\F^*$.



L'élément $g$ admet $d+1$ points fixes distincts dans $\P^{d}(\OC_{C})$ qui correspondent aux droites propres de $g$ dans $C^{d+1}$.  Montrons qu'elles sont toutes dans $\H^d_{K,s}(C)$. Pour cela, pour tout vecteur propre $v\in  C^{d+1}$ que l'on suppose unimodulaire à normalisation près,      nous devons prouver que  les coordonnées de $v$ modulo $\varpi$ sont libres sur $\F$.   Supposons que ce ne soit pas le cas et prenons $a\in \F^{d+1}\backslash \{0\}$  tel que $\langle a, v \rangle=0$.  Ainsi
\[
\langle (g)^t a ,  v \rangle= \langle a , g v \rangle = \lambda \langle a, v \rangle =0
\]
et $v^{\perp}\cap \F^{d+1}$ est  stable sous l'action de $g^t$.   Comme le polynôme caractéristique $P$ de $g^t$ est irréductible sur \footnote{Les sous-espaces stables sont en bijection avec les facteurs de $P$.} $\F$, $v^{\perp} \cap \F^{d+1}= \F^{d+1}$  ie. $v=0$, d'où une contradiction.

Prenons $y\in \H^d_{\OC_{\breve{K}},s}(\OC_{C}) $  le point fixe de $g$ correspondant à la valeur propre $\lambda$ et   $\bar{y}\in \H^d_{\bar{\F},s }(\bar{\F})$  sa spécialisation. D'après \ref{propdleq}, un point $x\in\dl_{\bar{\F}}^d (\bar{\F})$ corresponds à un vecteur $x=(x_0,\ldots, x_d)$ dans $\bar{\F}^{d+1}$ tel que $\tilde{u}_1(x)=1$. De plus, $x$ est dans la fibre $ \pi^{-1}(\bar{y})$,  si $x$ engendre la droite $\bar{y}$. Dans ce cas,  $[g]_2\cdot x = \lambda x$. Mais d'après la  description de l'isomorphisme $\dl^d_{\bar{\F}}(\bar{\F}) \iso \H_{\bar{\F},s}^d(u_1^{\frac{1}{N}})$ donnée dans \ref{remdlkumm}, on a $T=\frac{1}{x_0}$ et donc $[g]_2\cdot T= \lambda^{-1}T$ sur $ \pi^{-1}(\bar{y})$.


  Decrivons maintenant l'action provenant du premier revêtement $\Sigma^1$. D'après\footnote{\'Enoncé que l'on peut déjà trouver dans la preuve de \cite[(2.3.7)]{wa}} \cite[Corollaire 3.10]{J3},  la restriction $\XG[\Pi_D]_s$ des points de torsion du module universel $\XG[\Pi_D]$ au dessus du sommet $s$ est affine de sections  
  \[
  \Of(\XG[\Pi_D]_s)= \Of(\H^d_{\OC_{\breve{K}},s})[\tilde{T}]/ (\tilde{T}^{N+1}-\varpi u_1 \tilde{T}).
  \] Rappelons  que $\Of({\Sigma}_s^1)=\Of(\XG[\Pi_D]_s)[1/\tilde{T}, 1/\varpi]$ et $\tilde{T}=\varpi_N T$ dans $\Of({\Sigma}_s^1)\otimes \breve{K}_N$. Ainsi, calculer $[g]_1 \cdot\tilde{T}$ dans $\Of(\XG[\Pi_D]_s)$ revient à déterminer $[g]_1 \cdot T$ dans $\Of({\Sigma}_s^1)$ ou dans $\Of(\bar{\Sigma}_s^1)$. Plaçons-nous donc sur $\Of(\XG[\Pi_D]_s)$ et plus précisément sur l'idéal d'augmentation $\IC=\bigoplus_{i = 1}^{N} \tilde{T}^i \Of(\H^d_{\OC_{\breve{K}},s})$ de $\Of(\XG[\Pi_D]_s)$. La description précédente de l'idéal d'augmentation est en fait la décomposition en partie isotypique pour l'action de $\OC_{(d+1)}^*(\subset\OC_{D}^*)$. 
  Le cotangent de $\XG[\Pi_D]_s$ admet une décomposition similaire \[\IC/\IC^2=\bigoplus_{i\in 1}^{N} (\tilde{T}^i \Of(\H^d_{\OC_{\breve{K}},s})/\tilde{T}^i \Of(\H^d_{\OC_{\breve{K}},s})\cap \IC^2).\] Mais, on a aussi \[\IC/\IC^2 =(\lie \XG[\Pi_D]_s)^\vee=\ker (\Pi : \lie \XG_s \to \lie \XG_s)^\vee=(\lie \XG_s)^\vee / \Pi_{D,*}(\lie \XG_s)^\vee,\] il suffit donc de montrer que $\tilde{T} \Of(\H^d_{\OC_{\breve{K}},s})/\tilde{T} \Of(\H^d_{\OC_{\breve{K}},s})\cap \IC^2$ est non trivial et de comprendre l'action de la matrice $g$ fixée sur la  partie isotypique de $(\lie \XG_s )|_{\bar{y}}$ correspondante. 
  

  Utilisons la théorie de Cartier pour décrire le plan tangent.  D'après \ref{Drrep}, un point fermé   $z\in \H^d_{\bar{\F}}(\bar{\F})$  correspond à un couple\footnote{Le problème modulaire fait aussi intervenir une rigidification $\psi$ que nous ne mentionnerons pas.} 
  $(X,\rho)$ où $X$ est un $\OC_D$-module formel spécial sur $\bar{\F}$  et $\rho$ une quasi-isogénie  $  \Phi \rightarrow  X$ de hauteur $0$.  Aux modules $X$ et $\Phi$ correspondent des modules de Cartier $M_X $ et $M_{\Phi} $ sur\footnote{Cf.  \cite[ II.1.2 et II.1.3]{boca} la description de cette algèbre non-commutative.} $\OC_{\breve{K}}\langle F, V \rangle$ (ainsi qu'un opérateur $\Pi$ qui commute à $F$, $V$) qui sont libres de rang $4$ sur $\OC_{\breve{K}}$ et la quasi-isogénie $\rho$ induit un isomorphisme $M_{X}[\frac{1}{p}] \iso  M_{\Phi} [\frac{1}{p}]$.  Nous allons chercher à déterminer dans la suite les couples $(M_X,\rho)$ qui correspondent aux points de $\H^d_{\bar{\F},s}(\bar{\F})\subset \H^d_{\bar{\F}}(\bar{\F})$. Pour cela, nous aurons besoin de quelques résultats classiques sur les modules de Cartier spéciaux.

 L'action de $\OC_{(d+1)}$ sur $M_X$ et $M_{\Phi}$ induit des décompositions $M_X= \sum_{i\in \Z/(d+1)\Z} M_{X,i}$ (de même  pour $M_{\Phi}$) suivant les plongements de $\OC_{(d+1)}$ dans $\OC_{\breve{K}}$. Les opérateurs $V$, $\Pi$ sont de degré $1$ et $F$ est de degré $-1$ pour la graduation sur $\Z/(d+1)\Z$ précédente. On a le résultat classique suivant  :

\begin{lem}\label{lemmodcart}

Soit  $X/\bar{\F}$ un $\OC_D$-module formel spécial de dimension $(d+1)$  et de hauteur $(d+1)^2$ et $M_X$ le module de Cartier associé. On a les points suivants :

\begin{enumerate}
\item On a une identification $\lie X \cong M_X/VM_X$ et les parties isotypiques sont de la forme $(\lie X)_i=M_{X,i}/VM_{X,i-1}$
\item Pour tout $i\in \Z/(d+1)\Z$, on a $[M_{X,i},VM_{X,i-1}]=[M_{X,i},\Pi M_{X,i-1}]=1$ et $[M_{X,i},FM_{X,i+1}]=d$.
\item Soit $i\in \Z/(d+1)\Z$, les points suivants sont équivalents : 
       \begin{enumerate}
       \item $\Pi :(\lie X)_i\to (\lie X)_{i+1}$ est nulle,
       \item $\Pi M_{X,i}\subset VM_{X,i}$,
       \item $\Pi M_{X,i}=VM_{X,i}$,
       \item $M_{X,i+1}\neq \Pi M_{X,i} +VM_{X,i}$,
       \item $M_{X,i}=M_{X,i}^{V^{-1}\Pi}\otimes_{\OC_K}\OC_{\breve{K}}$ où $M_{X,i}^{V^{-1}\Pi}=\{m\in M_{X,i} : V m=\Pi m \}$.
       \end{enumerate}
Lorsque ces hypothèses sont vérifiées, on dit que $i$ est un indice critique.
\item Il existe au moins un indice critique.\end{enumerate}

\end{lem}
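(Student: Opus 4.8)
The plan is to follow the classical analysis of special Cartier modules, cf. Boutot--Carayol \cite{boca}; I only sketch the arguments. Throughout, for $\OC_{\breve K}$-submodules $B\subseteq A$ of the same rank I write $[A,B]:=\mathrm{length}_{\OC_{\breve K}}(A/B)=\dim_{\bar\F}(A/B)$ (recall $\varpi$ is a uniformizer of $\OC_{\breve K}$ as well, since $\breve K/K$ is unramified). Recall the relevant structure on $M:=M_X$: it is an $\OC_{\breve K}\langle F,V\rangle$-module with $FV=VF=\varpi$, the operator $\Pi$ commutes with $F$ and $V$ and, choosing the uniformizer of $\OC_D$ suitably, satisfies $\Pi^{d+1}=\varpi$; the operators $F,V,\Pi$ are injective because $M$ is $\varpi$-torsion-free; and the $\OC_{(d+1)}$-action induces the grading $M=\bigoplus_{i\in\Z/(d+1)\Z}M_{X,i}$, on which $V$ and $\Pi$ have degree $+1$ and $F$ degree $-1$, with each $M_{X,i}$ free of rank $d+1$ over $\OC_{\breve K}$. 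Part (1) is then the standard identification $\lie X\cong M/VM$ of Cartier theory, which is $\OC_{(d+1)}$-equivariant, combined with $VM=\bigoplus_i VM_{X,i-1}$.

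For part (2): by definition "$X$ special" means $\lie X$ is free of rank $1$ over $\OC_{(d+1)}\otimes_{\OC_K}\bar\F\cong\prod_i\bar\F$, i.e. each $(\lie X)_i=M_{X,i}/VM_{X,i-1}$ is one-dimensional, which is exactly $[M_{X,i},VM_{X,i-1}]=1$. For $F$: from $FV=\varpi$ and injectivity of $F$ one gets $F(VM_{X,i})=\varpi M_{X,i}$ and $[FM_{X,i+1},\varpi M_{X,i}]=[M_{X,i+1},VM_{X,i}]=1$, while $[M_{X,i},\varpi M_{X,i}]=\mathrm{rk}_{\OC_{\breve K}}M_{X,i}=d+1$; subtracting gives $[M_{X,i},FM_{X,i+1}]=d$. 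For $\Pi$: composing the $d+1$ injections $\Pi:M_{X,j}\hookrightarrow M_{X,j+1}$ around the cycle is multiplication by $\varpi$ on $M_{X,i}$, so $\sum_{j\in\Z/(d+1)\Z}[M_{X,j+1},\Pi M_{X,j}]=[M_{X,i},\varpi M_{X,i}]=d+1$; the point is that these $d+1$ colengths are all equal, so each equals $1$. This last equality is the only non-formal input: it is the structure theorem for special Cartier modules (reflecting the Raynaud structure of $\XG[\Pi_D]$), cf. \cite{boca}, and it is the main obstacle of the lemma.

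For part (3): (a)$\Leftrightarrow$(b): the map in (a) is induced by $\Pi:M_{X,i}\to M_{X,i+1}$ (well defined since $\Pi VM_{X,i-1}=V\Pi M_{X,i-1}\subseteq VM_{X,i}$) and vanishes precisely when $\Pi M_{X,i}\subseteq VM_{X,i}$. (b)$\Leftrightarrow$(c): by (2) both $\Pi M_{X,i}$ and $VM_{X,i}$ have colength $1$ in $M_{X,i+1}$, so an inclusion between them must be an equality. (b)$\Leftrightarrow$(d): two distinct colength-$1$ sublattices of $M_{X,i+1}$ already span it, hence $\Pi M_{X,i}+VM_{X,i}\neq M_{X,i+1}$ iff $\Pi M_{X,i}=VM_{X,i}$. (c)$\Leftrightarrow$(e): if $\Pi M_{X,i}=VM_{X,i}$ then $\varphi:=V^{-1}\Pi$ is a well-defined bijective Frobenius-semilinear operator on $M_{X,i}$, i.e. an étale (unit-root) $\varphi$-module over $\OC_{\breve K}$, hence trivial: $M_{X,i}=M_{X,i}^{\varphi}\otimes_{\OC_K}\OC_{\breve K}$ with $M_{X,i}^{\varphi}=\{m:\Pi m=Vm\}=M_{X,i}^{V^{-1}\Pi}$; conversely (e) forces $\varphi$ to preserve $M_{X,i}$, giving $\Pi M_{X,i}\subseteq VM_{X,i}$, hence equality by colengths.

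For part (4): since $\bar\F$ has characteristic $p$, $d\iota(\varpi)=0$ on $\lie X$, so $d\iota(\Pi)^{d+1}=d\iota(\Pi^{d+1})=0$. If no index were critical, then by the equivalences of (3) every map $\Pi:(\lie X)_i\to(\lie X)_{i+1}$ between the one-dimensional spaces $(\lie X)_i$ would be nonzero, hence an isomorphism, so the restriction of $d\iota(\Pi)^{d+1}$ to $(\lie X)_i$ would be an isomorphism — a contradiction. Hence at least one critical index exists.
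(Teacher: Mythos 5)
Your proposal is correct and follows essentially the same route as the paper: the Cartier–Zink identification $\lie X\cong M_X/VM_X$ for (1), colength bookkeeping from speciality and $FV=\Pi^{d+1}=\varpi$ for (2), the same chain of equivalences for (3) with Dieudonné–Manin (equivalently, triviality of étale $\varphi$-modules over $\OC_{\breve K}$) for (c)$\Rightarrow$(e), and nilpotence of $\Pi$ on $\lie X$ for (4). The only point worth flagging is that the equality of the $d+1$ colengths $[M_{X,i+1},\Pi M_{X,i}]$, which you defer to \cite{boca} as ``the only non-formal input'', is in fact formal and is exactly the commutativity argument the paper alludes to: since $V\Pi M_{X,i-1}=\Pi VM_{X,i-1}$ sits inside both $VM_{X,i}$ and $\Pi M_{X,i}$, and $[M_{X,i+1},VM_{X,i}]=[\Pi M_{X,i},\Pi VM_{X,i-1}]=1$, one gets
\[
[M_{X,i+1},\Pi M_{X,i}]+1=[M_{X,i+1},V\Pi M_{X,i-1}]=1+[M_{X,i},\Pi M_{X,i-1}],
\]
so all these colengths coincide, and they sum to $[M_{X,i},\varpi M_{X,i}]=d+1$, whence each equals $1$.
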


\begin{proof}
Pour $1.$, la deuxième assertion découle de la première qui est prouvée dans \cite[Th. 4.23]{zink}. Le reste a été prouvé dans \cite[§II.5]{boca} quand $d=1$. La traduction de ces arguments en dimension quelconque est transparente, mais nous en donnons quelques explications succincts. Pour $2.$ l'égalité $[M_{X,i},VM_{X,i-1}]=1$ provient du caractère spécial. Le reste se déduit du fait que $\Pi$, $V$, $F$ commutent entre eux, et des identités $[M_{X},\varpi M_{X}]=(d+1)^2$ (la hauteur de $X$) et $FV=\Pi^{d+1}=\varpi $. Pour $3.$, $(a)$ et $(b)$ sont équivalents d'après $1$. Le point $2.$ fournit le sens non-trivial de l'équivalence de $(c)$ avec $(a)$, $(b)$ (idem pour $(d)\Leftrightarrow (c)$). Pour $(c)\Rightarrow (e)$, on  applique la classification de Dieudonné-Manin à l'isocrystal $(M_{X,i}[1/p],V^{-1}\Pi)$ (l'autre sens est clair). Pour $4.$, on observe que la flèche $\Pi^{d+1}=\varpi : M_X/VM_X \to M_X/VM_X$ est nulle.
\end{proof}
  
  Pour simplifier, nous supposerons que tous les indices de $M_{\Phi}$ sont critiques.  Dans ce cas, un couple $(M_X,\rho)$ est dans  $\H^d_{\bar{\F},s}(\bar{\F})$ si et seulement si $\rho M_{X,d}=M_{\Phi,d}$ et $d$ est le seul indice critique de $M_X$. Pour un tel couple, on a une suite d'inclusion
\[\varpi M_{X,d}\subset \Pi^d M_{X,0}\subset M_{X,d}\]
qui définit une droite (d'après le point 4. et 2. de \ref{lemmodcart}) $D:=\Pi^d M_{X,0}/ \varpi M_{X,d}$ dans $M_{X,d}/ \varpi M_{X,d}$ et donc une flèche \[\psi :\H^d_{\bar{\F},s}(\bar{\F})\to \P(M_{X,d}/\varpi M_{X,d})\cong\P^d(\bar{\F}).\] On fixe aussi grâce au point 3. (e) de \ref{lemmodcart} une $\OC_K$-base de $M_{X,d}^{V^{-1}\Pi}$ qui donne lieu à une $\OC_{\breve{K}}$-base de $M_{X,d}$. Ceci confère à $\P(M_{X,d}/\varpi M_{X,d})$ un système de coordonnées et donc une notion convenable de droite et d'hyperplan $\F$-rationnels dans cet ensemble (qui ne dépends pas du choix de la base). Grâce à ces constructions, on peut décrire explicitement la restriction de l'isomorphisme dans \ref{Drrep} sur les points géométriques de $\H^d_{\bar{\F},s}$. 
  
  \begin{lem}
  
L'application $\psi$ définie précédemment induit une bijection $G^{\circ}$-équivariante:  
\[\H^d_{\bar{\F},s}(\bar{\F}) \iso \P^d(\bar{\F})\backslash \bigcup_{H\in \P^d(\F)}H.\]
  \end{lem}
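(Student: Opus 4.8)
The plan is to repackage the Cartier module data as a complete flag of $\OC_{\breve{K}}$-lattices whose reduction modulo $\varpi$ is exactly a flag of the kind parametrising $X(w)$ in Proposition~\ref{propdleq}; the bottom line of that flag will be $\psi(M_X,\rho)$, so the lemma reduces to Proposition~\ref{propdleq} together with an explicit inverse construction and a short equivariance check. Throughout I keep the notation of the discussion above: via $\rho$ one views $M_X=\bigoplus_i M_{X,i}$ inside $M_\Phi[1/p]$ with $M_{X,d}=M_{\Phi,d}$, the index $d$ being the unique critical one.

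First I would set $\Lambda_i:=\Pi^{d-i}(M_{X,i})\subseteq M_{X,d}$ for $0\le i\le d$, so $\Lambda_d=M_{X,d}$ and $\Lambda_0=\Pi^d M_{X,0}$. Since $\Pi$ is injective and $[M_{X,i+1}:\Pi M_{X,i}]=1$ by Lemma~\ref{lemmodcart}.2, and since $\varpi M_{X,d}=\Pi^{d+1}M_{X,d}\subseteq\Pi^d M_{X,0}=\Lambda_0$ while $[M_{X,d}:\varpi M_{X,d}]=d+1$, one obtains a chain $\varpi M_{X,d}\subsetneq\Lambda_0\subsetneq\Lambda_1\subsetneq\cdots\subsetneq\Lambda_d=M_{X,d}$ with colength-$1$ steps, hence a complete flag $0\subsetneq\overline{\Lambda}_0\subsetneq\cdots\subsetneq\overline{\Lambda}_d=M_{X,d}/\varpi M_{X,d}$ with $\dim\overline{\Lambda}_i=i+1$ and $\overline{\Lambda}_0=\Pi^d M_{X,0}/\varpi M_{X,d}=D=\psi(M_X,\rho)$.

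Next I would identify $\overline{\Lambda}_\bullet$ with a Deligne--Lusztig flag. Let $\phi$ be the $\sigma$-semilinear Frobenius of $M_{X,d}/\varpi M_{X,d}$ attached to the $\OC_K$-form $M_{X,d}^{V^{-1}\Pi}$, i.e. the reduction mod $\varpi$ of $V^{-1}\Pi$ (which preserves $M_{X,d}$ since $d$ is critical, by Lemma~\ref{lemmodcart}.3). Using only the relations $FV=VF=\varpi$, $\Pi^{d+1}=\varpi$, the criticality identity $\Pi M_{X,d}=VM_{X,d}$ and the non-criticality identities $M_{X,i+1}=\Pi M_{X,i}+VM_{X,i}$ for $i\ne d$, I would prove by descending induction that $\overline{\Lambda}_i=D\oplus\phi(D)\oplus\cdots\oplus\phi^i(D)$ for all $i$ (for instance $\phi(\Lambda_0)=V^{-1}(\varpi M_{X,0})=FM_{X,0}$, and one propagates the identity by comparing $\Lambda_{i+1}$ with $\Lambda_i+\phi(\Lambda_i)$ through the non-criticality relations). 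Then $\{D,\phi(D),\dots,\phi^d(D)\}$ spans $M_{X,d}/\varpi M_{X,d}$, so $D$ lies in no $\F$-rational hyperplane, i.e. $D\in\Omega^d_{\bar{\F}}$, and under $X(w)\cong\Omega^d_{\bar{\F}}$ the flag $\overline{\Lambda}_\bullet$ corresponds to $D$. For bijectivity I would run this backwards: given $D\in\Omega^d_{\bar{\F}}$, form the complete flag $\overline{\Lambda}_i:=\bigoplus_{j\le i}\phi^j(D)$, lift it to the unique lattice flag $\varpi M_{\Phi,d}\subseteq\Lambda_0\subsetneq\cdots\subsetneq\Lambda_d=M_{\Phi,d}$ (take preimages under the reduction map), set $M_{X,i}:=(\Pi^{d-i})^{-1}(\Lambda_i)\subseteq M_{\Phi,i}[1/p]$, and check that $M_X:=\bigoplus_i M_{X,i}$ is then stable under $F,V,\Pi$, special, with $d$ its unique critical index and $\rho M_{X,d}=M_{\Phi,d}$; this gives a two-sided inverse of $\psi$, hence a bijection $\H^d_{\bar{\F},s}(\bar{\F})\xrightarrow{\sim}\Omega^d_{\bar{\F}}$. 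I expect the genuine difficulty to be exactly here, namely verifying that the Deligne--Lusztig shape of $\overline{\Lambda}_\bullet$ is equivalent to ``$d$ is the \emph{unique} critical index of $M_X$'' (failure of $\{D,\phi^j(D)\}$ to span being precisely what would make a second index critical), which requires careful bookkeeping with $F$, $V$, $\Pi$ on the graded pieces.

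Finally, for $G^{\circ}$-equivariance I would observe that, for the fixed identification $G\cong\gln_{d+1}(K)$ acting on $M_\Phi[1/p]$, the subgroup $G^{\circ}=\gln_{d+1}(\OC_K)$ is the stabiliser of $M_{\Phi,d}$ and acts on $M_{\Phi,d}/\varpi M_{\Phi,d}$ through the standard quotient $\gln_{d+1}(\OC_K)\twoheadrightarrow\gln_{d+1}(\F)$, compatibly with the $\F$-structure defining the $\F$-rational hyperplanes; in particular $\Omega^d_{\bar{\F}}$ is $G^{\circ}$-stable. Since $g$ acts on the moduli problem by $(X,\rho)\mapsto(X,g\circ\rho)$, it replaces the embedding $M_X\hookrightarrow M_\Phi[1/p]$ by its composite with $g$, so $\psi(g\cdot(M_X,\rho))=g\cdot[\Pi^d M_{X,0}/\varpi M_{X,d}]=g\cdot\psi(M_X,\rho)$ directly from the intrinsic definition of $\psi$. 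This last step is essentially formal once the action on $M_{\Phi,d}/\varpi M_{\Phi,d}$ has been matched with the standard one, and is not where the work lies.
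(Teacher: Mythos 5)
Your strategy is essentially the one the paper follows: reduce everything to the statement that, for a pair $(M_X,\rho)$ with $\rho M_{X,d}=M_{\Phi,d}$, the index $d$ is the \emph{unique} critical index if and only if the semilinear-Frobenius orbit of the line $D=\Pi^dM_{X,0}/\varpi M_{X,d}$ spans $M_{X,d}/\varpi M_{X,d}$, and then invoke the Deligne--Lusztig description of Proposition~\ref{propdleq}. The paper phrases this with a generator $m_0\in M_{X,0}\setminus VM_{X,d}$ and the family $(\Pi^iV^{d-i}m_0)_i$ rather than with your flag $\Lambda_i=\Pi^{d-i}M_{X,i}$, but the computation is the same; your write-up is in fact a little more explicit than the paper on surjectivity (the inverse construction) and on equivariance, which the paper leaves implicit.

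There is, however, one step that fails as literally stated: the identity $\overline{\Lambda}_i=D\oplus\phi(D)\oplus\cdots\oplus\phi^i(D)$ with $\phi$ the reduction of $V^{-1}\Pi$. The non-criticality relation $M_{X,i+1}=\Pi M_{X,i}+VM_{X,i}$ gives $\Lambda_{i+1}=\Pi^{d-i-1}M_{X,i+1}=\Lambda_i+\Pi^{-1}V(\Lambda_i)$, so the flag is built from the \emph{inverse} Frobenius orbit: $\overline{\Lambda}_i=D+\phi^{-1}(D)+\cdots+\phi^{-i}(D)$, where $\phi^{-1}=\Pi^{-1}V$ reduces to $\sum a_jx_j\mapsto\sum a_j^{1/q}x_j$ in a basis $(x_j)$ of $M_{X,d}^{V^{-1}\Pi}$ (this is exactly the paper's $\Pi^{d-i}V^im_0=(\Pi^{-1}V)^i\Pi^dm_0$). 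Your own sample computation $\phi(\Lambda_0)=FM_{X,0}$ already shows the problem: writing $\Pi^dm_0=\sum a_jx_j$, one has $\overline{Fm_0}=\sum a_j^{q}\overline{x}_j$, which for $d\ge 2$ and generic $a$ does not lie in $\overline{\Lambda}_1={\rm span}\bigl(\sum a_j\overline{x}_j,\ \sum a_j^{1/q}\overline{x}_j\bigr)$. The slip is harmless for the final statement, because a semilinear bijection carries spanning families of lines to spanning families, so $\{\phi^{-j}(D)\}_{0\le j\le d}$ spans if and only if $\{\phi^{j}(D)\}_{0\le j\le d}$ does, and either condition is equivalent to $D\in\P^d(\bar{\F})\setminus\bigcup_{H\in\P^d(\F)}H$; but the induction should be run with $\phi^{-1}$, and the inverse construction of $M_{X,i}$ from a given $D$ should likewise use the flag $\bigoplus_{j\le i}\phi^{-j}(D)$.
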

  
\begin{proof}
Notons qu'un couple $(M_X,\rho)$  dans $\H^d_{\bar{\F},s}(\bar{\F})$ est déterminé par la donnée des inclusions $\rho M_{X,i}\subset   M_{\Phi} [\frac{1}{p}]$. Par hypothèse, il suffit d'étudier les cas $i\neq d$. Comme aucun de ces indices ne sont critiques, on a les identités (cf 3. (d) dans \ref{lemmodcart}) \[M_{X,1}=VM_{X,0}+\Pi M_{X,0}, \cdots , \ M_{X,d-1}=VM_{X,d-2} +\Pi M_{X,d-2}\]  et le sous-module $M_{X,0}$ détermine $M_{X,1}, \dots , M_{X,d-1}$. Revenons à $\psi$, la droite $\Pi^d M_{X,0}/ \varpi M_{X,d}\subset M_{X,d}/ \varpi M_{X,d}$ détermine $\rho\Pi^d M_{X,0}$ (car $\rho M_{X,d}$ et $\rho \varpi M_{X,d}$ sont fixés) et donc le module $\rho M_{X,0}$ par injectivité de la multiplication par $\Pi$. Dit autrement, l'application $\psi$ est injective. 

Il reste à prouver que l'image de cette flèche est l'ouvert décrit dans l'énoncé. Prenons un couple $(M_X,\rho)\in \H^d_{\bar{\F}}(\bar{\F})$ tel que $\rho M_{X, d}=M_{\Phi,d}$ et on fixe $m_0\in M_{X,0}\backslash V M_{X,d}$. En particulier, $m_0$ engendre $\lie (X)_0$. Le point crucial est d'observer l'équivalence suivante :   $d$ est le seul indice critique si et seulement si $(\Pi^i V^{d-i} m_0)_i$ est une $\bar{\F}$-base de  $M_{X,d}/\varpi M_{X,d}$. Supposons le deuxième hypothèse, le module suivant est contenue dans $V M_{X,d-1}$ et est d'indice $1$  dans $M_{X,d}$
\[\OC_{\breve{K}}\cdot V^d m_0+\cdots +\OC_{\breve{K}}\cdot V \Pi^{d-1}m_0  +\varpi M_{X,d}\]
Il est donc confondu avec $VM_{X,d-1}$ d'après le point 2. de \ref{lemmodcart}. Ainsi, $\Pi^d m_0$ engendre $\lie (X)_d$ et la flèche $\Pi^{ d} :\lie (X)_0\to\lie (X)_d$ est non nulle ce qui montre que $d$ est le seul indice critique. Réciproquement, supposons les indices $0,\dots, d-1$ non critiques et établissons d'abord par récurrence sur $0\le k\le d$ :
\[M_{X,k}= \OC_{\breve{K}}\cdot V^k m_0 +\OC_{\breve{K}}\cdot V^{k-1}\Pi m_0+\cdots +\OC_{\breve{K}}\cdot\Pi^k m_0 +V^{k+1} M_{X,d}, \]
\[V M_{X,k-1}=\OC_{\breve{K}}\cdot V^k m_0 + \OC_{\breve{K}}\cdot V^{k-1}\Pi m_0+\cdots +\OC_{\breve{K}}\cdot V\Pi^{k-1} m_0 +V^{k+1} M_{X,d}\]
Le cas $k=0$ est une conséquence directe de la définition de $m_0$. Pour l'hérédité, cela découle de la relation $M_{X,k+1}=\Pi M_{X,k}+V M_{X,k}$ pour $k\neq d$. Quand $k$ vaut $d$, on en déduit que la famille $(\Pi^i V^{d-i} m_0)_i$ engendre \[M_{X,d}/V^{d+1} M_{X,d}=M_{X,d}/\Pi^{d+1} M_{X,d}=M_{X,d}/\varpi M_{X,d}\] et est donc une base par argument de dimension.

Pour terminer la preuve,   écrivons $\Pi^d m_0=\sum_i a_i x_i$  dans une base $(x_i)_i$ de $M_{X,d}$ par des éléments de $M_{X,d}^{V^{-1}\Pi}$. On obtient alors dans $M_{X,d}/\varpi M_{X,d}$ :
\[\Pi^{d-i} V^i m_0=(\Pi^{-1}V)^i \Pi^d m_0=\sum_i a_i^{1/p^i} x_i\]
Le même argument que dans la preuve de \ref{propdleq} montre que $(\Pi^i V^{d-i} m_0)_i$ est une $\bar{\F}$-base de  $M_{X,d}/\varpi M_{X,d}$ si et seulement si $\bar{\F}\cdot V^d m_0 \in \P^d(\bar{\F})\backslash \bigcup_{H\in \P^d(\F)}H$ si et seulement si $\bar{\F} \cdot\Pi^d m_0 \in \P^d(\bar{\F})\backslash \bigcup_{H\in \P^d(\F)}H$.

\end{proof}

Nous appellerons l'isomorphisme précédent morphisme des périodes. Nous pouvons maintenant terminer la preuve du résultat. Reprenons la matrice $g$ construite précédemment ainsi que la droite propre $\bar{y}\in \H^d_{\bar{\F},s}(\bar{\F})$ associée à la valeur propre $\lambda$. Via l'application des périodes, cette droite détermine un couple $(M_X,\rho)$ pour lequel $g$ agit par multiplication par $\lambda$ sur $D:=\Pi^d M_{X,0}/ \varpi M_{X,d}$. 
En reprenant les arguments de la preuve du lemme précédent,  
les générateurs de $D$ engendrent $\lie (X)_d$ où $g$ agit donc par multiplication par $\lambda$ sur $\lie (X)_d$. Comme l'indice $d$ est critique, on a $\lie (X)_d =\ker (\Pi : \lie (X)_d\to\lie (X)_0)$  d'où $[g]_1\cdot T=\lambda^{-1} T$ sur $\pi^{-1} (\bar{y})$ par dualité. Le résultat s'en déduit.

\end{proof}



\subsection{Le tube au-dessus d'une intersection de composantes irréductibles \label{sssectiondrsimpsigma1}}
 
  Le but de ce paragraphe est de démontrer le résultat suivant:
  
  \begin{prop}\label{lemdrsimpsigma2}
  Si $\sigma$ est un simplexe de dimension non nulle, alors $\hdrc{j}(\Sigma^1_{\ost(\sigma)})[\theta] =0$ pour tout 
  $j$ et tout caractère primitif $\theta$. 
  \end{prop}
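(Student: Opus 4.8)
On commencerait par se ramener à un énoncé sur la cellule $\mathring\sigma$. Comme dans la mise en place de la preuve du théorème \ref{theodrsigma1}, la dualité de Poincaré (proposition \ref{theodualitepoinc}) et le fait que $\theta$ est primitif si et seulement si $\theta^{-1}$ l'est permettent de se ramener à montrer $\hdr{*}(\Sigma^1_{\ost(\sigma)})[\theta]=0$ pour tout caractère primitif $\theta$. On applique alors le théorème \ref{theoprinc} (au-dessus de $\breve K$, comme dans la preuve de la proposition \ref{lemdrsommetsigma1}) au modèle semi-stable $\H^d_{\OC_{\breve K}}$, au $\mu_N$-revêtement $\Sigma^1\to\H^d_{\breve K}$ et à la partie $J$ formée des sommets de $\sigma$: puisque $]Y_J[$ est le tube $\H^d_{\breve K,\ost(\sigma)}$, dont le lieu lisse est $\H^d_{\breve K,\mathring\sigma}$, la flèche de restriction induit un isomorphisme $\F_{q^{d+1}}^*$-équivariant $\hdr{*}(\Sigma^1_{\ost(\sigma)})\xrightarrow{\ \sim\ }\hdr{*}(\Sigma^1_{\mathring\sigma})$, et il suffit de prouver $\hdr{*}(\Sigma^1_{\mathring\sigma})[\theta]=0$.

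Quitte à translater $\sigma$ par $G$, on suppose $\sigma$ standard de type $(e_0,\dots,e_k)$ avec $k\ge 1$ et $\sum_i e_i=d+1$; alors $\mathring\sigma\subset\ost(s)$ pour $s$ le sommet standard et, par le théorème \ref{theoeqsigma1}, $\Sigma^1_{\mathring\sigma}\cong\H^d_{\breve K,\mathring\sigma}\bigl((\varpi u_1)^{1/N}\bigr)$ avec $u_1=\pm\prod_{[a]\in\P^d(\F)}(l_a/z_d)^{q-1}$. Sous l'isomorphisme $\H^d_{\breve K,\mathring\sigma}\cong A_k\times C_\sigma$ (où $C_\sigma=\prod_{i=0}^k C_{e_i-1}$), le même raisonnement que dans le lemme \ref{lemprod} (appliqué avec $S=C_\sigma$ et le tore monômial semi-ouvert $A_k$) fournit une décomposition $[\Sigma^1_{\mathring\sigma}]=[T\times C_\sigma]+[A_k\times T']$ dans $\het 1(\H^d_{\breve K,\mathring\sigma},\mu_N)$, avec $T\to A_k$ et $T'\to C_\sigma$ des $\mu_N$-revêtements. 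La proposition de Künneth pour la cohomologie de de Rham d'un revêtement d'un produit donne alors $\hdr{*}(\Sigma^1_{\mathring\sigma})[\theta]\cong\hdr{*}(T)[\theta]\otimes\hdr{*}(T')[\theta]$, de sorte qu'il suffira d'établir $\hdr{*}(T)[\theta]=0$.

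Pour identifier $T$, on le restreint à $A_k\times\{\xi\}$ pour un point $\xi\in C_\sigma(\breve K_N)$, ce qui donne $T\cong A_k\bigl((\varpi\, u_1|_{A_k\times\xi})^{1/N}\bigr)$. Dans la cellule $\mathring\sigma$, pour une droite $[a]\in\P^d(\F)$ de niveau $i$ dans le drapeau de $\sigma$, un relevé unimodulaire convenable $a$ s'écrit $a=a_1+a_2$ avec $a_1$ dans le bloc $N_i$, $a_2\in M_{i-1}$, et l'on a $l_a/z_d=(z_{d_i}/z_d)\,(l_{a_1}/z_{d_i})\,(1+l_{a_2}/l_{a_1})$, où $l_{a_1}/z_{d_i}\in\Of^*(C_\sigma)$ est de valeur absolue $1$ (remarque \ref{remniinvci}) et $|l_{a_2}/l_{a_1}|<1$ (définition de $\mathring\sigma$). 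En restreignant à $A_k\times\{\xi\}$, le deuxième facteur devient une constante de valeur absolue $1$ et le troisième une $1$-unité de $A_k$, donc tous deux des puissances $N$-ièmes, $N$ étant premier à $p$ et le corps résiduel de $\breve K_N$ algébriquement clos. Ainsi, modulo $(\Of^*(A_k))^N$, $l_a/z_d\equiv z_{d_i}/z_d$ si $i<k$ et $l_a/z_d\equiv 1$ si $i=k$; en comptant $(q-1)\cdot\#\{[a]\ \text{de niveau}\ i\}=q^{d_{i-1}+1}(q^{e_i}-1)=:m_i$, il vient $\varpi\, u_1|_{A_k\times\xi}\equiv\varpi\prod_{i=0}^{k-1}(z_{d_i}/z_d)^{m_i}$, c'est-à-dire $T\cong A_k\bigl((\varpi\prod_{i<k}(z_{d_i}/z_d)^{m_i})^{1/N}\bigr)$.

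Il resterait enfin à voir que $\hdr{*}(T)[\theta]=0$. Dans les coordonnées $w_0=z_{d_0}/z_d$, $w_i=(z_{d_i}/z_d)(z_{d_{i-1}}/z_d)^{-1}$, l'espace $A_k$ est un tore monômial semi-ouvert et $T$ un revêtement de Kummer $A_k((\varpi w^c)^{1/N})$, les exposants $c_l=\sum_{i\ge l}m_i$ vérifiant $\gcd(c_0,\dots,c_{k-1})=\gcd(m_0,\dots,m_{k-1})$. D'après le théorème \ref{theodrcouronne} (via le corollaire \ref{corosouv}), $\hdr{q}(T)=\bigoplus_{j=0}^{\pi_0-1}t_0^j\,\hdr{q}(A_k)$ avec $\pi_0=\gcd(N,m_0,\dots,m_{k-1})$, le groupe $\F_{q^{d+1}}^*$ agissant trivialement sur $\hdr{q}(A_k)$ et par un caractère d'ordre $\pi_0$ sur $t_0$; donc $\hdr{*}(T)[\chi]=0$ dès que l'ordre de $\chi$ ne divise pas $\pi_0$. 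Or $\gcd(N,q)=1$ entraîne $\gcd(N,m_i)=\gcd(q^{d+1}-1,q^{e_i}-1)=q^{\gcd(d+1,e_i)}-1$, d'où $\pi_0=q^{e^*}-1$ avec $e^*=\gcd(d+1,e_0,\dots,e_{k-1})$; comme $k\ge 1$ on a $e_0\le d$, donc $e^*$ est un diviseur propre de $d+1$. Un caractère de $\F_{q^{d+1}}^*$ d'ordre divisant $q^{e^*}-1$ est trivial sur le noyau de $\nrm_{\F_{q^{d+1}}/\F_{q^{e^*}}}$, donc se factorise par cette norme et n'est pas primitif. Par conséquent $\hdr{*}(T)[\theta]=0$, puis $\hdr{*}(\Sigma^1_{\mathring\sigma})[\theta]=0$ et enfin $\hdrc{*}(\Sigma^1_{\ost(\sigma)})[\theta]=0$. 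L'étape délicate sera la troisième, à savoir le contrôle précis — modulo puissances $N$-ièmes — de la restriction de $u_1$ à la cellule le long du facteur $A_k$; le reste ne fait qu'assembler le théorème \ref{theoprinc}, la formule de Künneth et un calcul élémentaire de pgcd.
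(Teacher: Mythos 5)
Votre esquisse est correcte et suit essentiellement la démonstration du texte : réduction au tube de la cellule $\mathring\sigma$ par le théorème \ref{theoprinc}, décomposition de Künneth le long de $\H^d_{\breve K,\mathring\sigma}\cong A_k\times C_\sigma$, calcul du nombre de composantes connexes $q^{e^*}-1$ du revêtement au-dessus de $A_k$ avec $e^*=\gcd(d+1,e_0,\dots,e_k)$ diviseur propre de $d+1$, et annulation de la partie $\theta$-isotypique par primitivité. La seule différence, purement organisationnelle, est que vous obtenez la scission du torseur par l'argument abstrait du lemme \ref{lemprod} puis restriction à une fibre $A_k\times\{\xi\}$, là où le texte factorise directement l'unité $u_1$ (lemme \ref{lemdecompunitsimpcirc}) — mais votre calcul explicite de $u_1|_{A_k\times\xi}$ via $a=a_1+a_2$ est exactement celui du papier.
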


   En utilisant l'action de $G$, on peut supposer que $\sigma$ est un simplexe standard (en particulier, on a $\sigma=\{[M_i]\}_i$ avec $M_0=\OC_K^{d+1}$), de dimension $k\geq 1$ et de type 
   $(e_0,...,e_k)$. 
D'après le théorème \ref{theoprinc}, on dispose d'un isomorphisme naturel
 \[ \hdr{j} (\Sigma^1_{  \ost(\sigma)})[\theta] \cong \hdr{j} (\Sigma^1_{  \ost(\sigma)} \backslash (\bigcup_{s \notin \sigma} \Sigma^1_{ \ost(s)} ))[\theta] \cong \hdr{j} (\Sigma^1_{ \mathring{\sigma}})[\theta]. \]

On a,  d'apr\`es le théorème  \ref{theoeqsigma1} 
$$  \Sigma^1_{ \ost(s)}=\H_{\breve{K},\ost(s)}^d((\varpi u_1)^{\frac{1}{N}}),$$ avec $u_1= (-1)^{d}\prod_{a \in (\F_q)^{d+1} \backslash \{ 0 \}}\frac{a_0z_0 +\cdots +a_d z_d}{z_d}$ avec $z=[z_0,\cdots,z_d]$ un système de variable sur l'espace projectif $\P^d_K$  adapté à $\sigma$.  Nous souhaitons restreindre le torseur  $\kappa(\varpi u_1)$ \`a $\H_{\breve{K}, \mathring{\sigma}}^d$ car $\Sigma^1_{ \mathring{\sigma}}\subset\Sigma^1_{ \ost(s)}$. 
  On rappelle \cite[6.4]{ds} que cet espace admet une d\'ecomposition 
  \[ \H_{\breve{K}, \mathring{\sigma}}^d \cong A_k \times \prod_{i=0}^k C_{e_i-1}\cong A_k \times C_{\sigma}, \]
  où $A_k$ et $C_{e_i-1}, C_{\sigma}$ sont les espaces introduits dans le paragraphe \ref{paragraphhdkrectein}. Nous allons exhiber une décomposition similaire pour les torseurs. Introduisons avant quelques notations. Pour $a$ unimodulaire ie. $a\in M_0\backslash\varpi M_0$,   $i(a)$ sera  l'unique entier tel que $a \in M_{i(a)} \backslash M_{i(a)-1}$.
  
  \begin{lem}\label{lemdecompunitsimpcirc}
   Soient $S$ un ensemble fini de vecteur unimodulaire et $(\alpha_a)_{a\in S}$ des entiers de somme nulle (sans perte de généralité, on peut supposer que $S$ contient les variables $(z_i)_{0\le i\le d}$) et soit 
 $Q=\prod_{a\in S} l_a^{\alpha_a}$. Notons \[Q_{A_k}=\pro{z_{d_{i(a)}}^{\alpha_a}}{a\in S}{} \in \Of^*(A_k),\] 
 Il existe des sections $Q_{C_{e_i-1}}\in \Of^*(C_{e_i-1})$, chacune étant un produit homogène de degré $0$ de formes linéaire tel que  
  \[  Q= Q_{A_k} Q_{C_{e_0-1}} \dots Q_{C_{e_k-1}} =Q_{A_k} Q_{C_{\sigma}} \pmod {1+ \Of^{++}(\H_{\breve{K}, \mathring{\sigma}}^d)}  \]
 avec   $Q_{C_{\sigma}}=Q_{C_{e_0-1}} \dots Q_{C_{e_k-1}} \in\Of^*(C_{\sigma})$
  \end{lem}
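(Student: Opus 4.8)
The plan is to process the product $Q=\prod_{a\in S}l_a^{\alpha_a}$ one factor at a time, splitting each $l_a$ according to the "level" $i(a)$ into a monomial in the $A_k$-coordinates, a unit pulled back from one $C_{e_i-1}$-factor, and a multiplicative error lying in $1+\Of^{++}$, and then reassembling everything using $\sum_a\alpha_a=0$. Concretely, normalize so that $S$ contains the coordinate vectors (this costs nothing, one may always adjoin them with exponent $0$). For $a\in S$ set $i=i(a)$ and take the decomposition $a=a_1+a_2$ with $a_1\in N_{i}$, $a_2\in M_{i-1}$ provided by Remark \ref{remniinvci}. Since $a\notin M_{i-1}$, one has $a_1\notin\varpi N_i$, hence $a_1\neq 0$ and $a_1\in M_{i}\setminus M_{i-1}$; moreover $a$, $a_1$ and $e_{d_i}$ are $K$-rational nonzero vectors, so $l_a$, $l_{a_1}$ and $z_{d_i}=l_{e_{d_i}}$ are invertible on $\H^d_{\breve{K},\mathring{\sigma}}\subset\H^d_K$. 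One may therefore factor
\[ l_a \;=\; z_{d_i}\cdot\frac{l_{a_1}}{z_{d_i}}\cdot\Bigl(1+\frac{l_{a_2}}{l_{a_1}}\Bigr), \]
the last factor being $1$ when $a_2=0$.

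Now there are three points to record. First, by Remark \ref{remniinvci}, $l_{a_1}/z_{d_i}\in\Of^*(C_{e_i-1})$, i.e. this middle factor is a unit pulled back from the single factor $C_{e_i-1}$ of $C_\sigma$. Second, since $\sum_{a\in S}\alpha_a=0$ we have $\prod_{a\in S}z_{d_{i(a)}}^{\alpha_a}=\prod_{a\in S}(z_{d_{i(a)}}/z_d)^{\alpha_a}$, a Laurent monomial in the coordinates $z_{d_0}/z_d,\dots,z_{d_{k-1}}/z_d$ of $A_k$ (with $z_{d_k}/z_d=1$), hence an element $Q_{A_k}\in\Of^*(A_k)$. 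Third, by the very definition of $\H^d_{\breve{K},\mathring{\sigma}}$ (the equalities $|l_{a'}(z)|=|l_{b'}(z)|$ for $a',b'\in M_i\setminus M_{i-1}$ and the strict inequalities $|l_c(z)|<|l_{a'}(z)|$ for $c\in M_{i-1}$), applied to $a,a_1\in M_i\setminus M_{i-1}$ and $a_2\in M_{i-1}$, one gets $|l_{a_1}(z)|=|l_a(z)|$ and $|l_{a_2}(z)|<|l_{a_1}(z)|$ at every point $z$, so $l_{a_2}/l_{a_1}\in\Of^{++}(\H^d_{\breve{K},\mathring{\sigma}})$ and $1+l_{a_2}/l_{a_1}\in 1+\Of^{++}(\H^d_{\breve{K},\mathring{\sigma}})$. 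Raising the displayed factorisation to the power $\alpha_a$ and multiplying over $a\in S$, the $z_{d_{i(a)}}^{\alpha_a}$ assemble into $Q_{A_k}$; grouping the $(l_{a_1}/z_{d_i})^{\alpha_a}$ according to the value of $i=i(a)$ gives, for each $i$, the element $Q_{C_{e_i-1}}:=\prod_{a\in S,\ i(a)=i}(l_{a_1}/z_{d_i})^{\alpha_a}\in\Of^*(C_{e_i-1})$, which is visibly a homogeneous degree-$0$ monomial in the linear forms $l_{a_1}$ and $z_{d_i}$ (the exponent of $z_{d_i}$ being $-\sum_{i(a)=i}\alpha_a$); and the remaining product $\prod_{a\in S}(1+l_{a_2}/l_{a_1})^{\alpha_a}$ is a finite product of elements of $1+\Of^{++}(\H^d_{\breve{K},\mathring{\sigma}})$. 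This yields $Q\equiv Q_{A_k}\,Q_{C_{e_0-1}}\cdots Q_{C_{e_k-1}}\pmod{1+\Of^{++}(\H^d_{\breve{K},\mathring{\sigma}})}$, and one sets $Q_{C_\sigma}:=\prod_{i=0}^{k}Q_{C_{e_i-1}}\in\Of^*(C_\sigma)$.

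The only genuinely delicate part is the third point above, i.e. the bookkeeping with the chain $M_{-1}=\varpi M_k\subsetneq M_0\subsetneq\cdots\subsetneq M_k$: one must make sure that the decomposition of Remark \ref{remniinvci} really places $a_1$ in $M_{i(a)}\setminus M_{i(a)-1}$ and $a_2$ in $M_{i(a)-1}$ (which uses that $a$ is unimodular, so $a\in M_0\setminus M_{-1}$ and $i(a)$ is well defined, and that $a_1\notin\varpi N_{i(a)}$), so that the explicit inequalities cutting out $\H^d_{\mathring{\sigma}}$ become applicable. Everything else is a direct computation in the charts $\H^d_{\breve{K},\mathring{\sigma}}\cong A_k\times\prod_{i=0}^{k}C_{e_i-1}$ recalled in \ref{paragraphhdkrectein} together with Remark \ref{remniinvci}.
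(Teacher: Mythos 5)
Your argument is correct and follows essentially the same route as the paper: both rest on the decomposition $a=a_1+a_2$ with $a_1\in N_{i(a)}$ unimodular and $a_2\in M_{i(a)-1}$ from Remark \ref{remniinvci}, the factorisation $l_a=z_{d_{i(a)}}\cdot\frac{l_{a_1}}{z_{d_{i(a)}}}\cdot\bigl(1+\frac{l_{a_2}}{l_{a_1}}\bigr)$, and the defining inequalities of $\H^d_{\breve{K},\mathring{\sigma}}$ to place the last factor in $1+\Of^{++}$. The only cosmetic difference is that the paper normalises each factor by $z_d$ at the outset (reducing to fractions $l_a/z_d$ via $\sum_a\alpha_a=0$) whereas you perform this normalisation only at the end when assembling $Q_{A_k}$; your additional verification that $a_1$ is unimodular is a welcome point that the paper leaves implicit.
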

  
  \begin{proof}
Il suffit de montrer le r\'esultat pour les fractions de la forme $\frac{l_a}{z_{d}}$. On rappelle que l'on a introduit des sous-modules $N_i\subset M_i$ tels que :
\[M_k=N_0\oplus \cdots\oplus N_k\]
\[M_i=N_0\oplus \cdots\oplus N_i\oplus \varpi N_{i+1}\oplus \cdots\oplus\varpi N_k.\] 

On peut alors d\'ecomposer $a = a_1 +a_2$ avec $a_1$ dans $N_{i(a)}$ unimodulaire, $a_2$ dans $M_{i(a)-1}$. On a 
  \[\frac{l_a}{z_{d}}= \frac{z_{d_{i(a)}}}{z_{d}}   \frac{l_{a_1}+ l_{a_2}}{z_{d_{i(a)}}}=\frac{z_{d_{i(a)}}}{z_{d}}   \frac{l_{a_1}}{z_{d_{i(a)}}} (1+ \frac{l_{a_2}}{l_{a_1}}). \]
 Le  terme $\frac{z_{d_{i(a)}}}{z_{d}}$ est dans $\Of^*(A_k)$, la fraction $\frac{l_{a_1}}{z_{d_{i(a)}}}$ dans $\Of^*(C_{e_{i}-1})$ (cf \ref{remniinvci}) et $\frac{l_{a_2}}{l_{a_1}}$ dans $\Of^{++}(\H_{\breve{K},\mathring{\sigma}}^d)$ par définition de $\H_{\breve{K},\mathring{\sigma}}^d$. De plus, la fraction $Q_{A_k}=\frac{z_{d_{i(a)}}}{z_{d}}$ a la forme voulue ce qui conclut la preuve de l'\'enonc\'e.   
  
  \end{proof}
  
  D'après ce qui précède (et en utilisant le fait que $N$ est premier à 
  $p$, donc toute fonction $f$ telle que $|f-1|<1$ est une puissance $N$-ième), on peut  décomposer $u_1=u_{A_k}u_{C_{\sigma}} \pmod{\Of^*(\H_{\breve{K},\mathring{\sigma}}^d)^N}$. Introduisons les $\mu_N$-revêtements 
  $$\TC_{A_{k}}=A_{k} (u_{A_k}^{1/N})\,\,\text{et}\,\, \TC_{C_{\sigma}}= C_{\sigma} ((\varpi u_{C_\sigma})^{1/N} ).$$
  Soit $H$ l'antidiagonale de $\mu_N^{2}$. On a 
  \[ \Sigma^1_{ \mathring{\sigma}} = (A_k \times C_{\sigma})((\varpi u_{A_k}u_{C_{\sigma}})^{1/N})=(\TC_{A_{k}} \times_{\breve{K}} \TC_{C_{\sigma}})/ H. \]
  
\begin{lem}
 L'espace $\TC_{A_{k}}$  a $q^m-1$ composantes géométriques avec $m={\rm PGCD}(d+1,e_0,\cdots,e_k)$. 

\end{lem}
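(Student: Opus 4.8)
The plan is to make the class of the $\mu_N$-torseur $\TC_{A_k}$ completely explicit and then to finish by an elementary g.c.d.\ computation. First I would rewrite $A_k$ as a tore monômial semi-ouvert: on $A_k$ one has the invertible coordinates $y_j := z_{d_j}/z_d$ for $0\le j\le k-1$ (the components of the map $\H^d_{\breve K,\mathring\sigma}\to A_k$), and setting $v_l := y_l y_{l+1}^{-1}$ for $l\le k-2$ and $v_{k-1}:=y_{k-1}$, so that $y_j=v_jv_{j+1}\cdots v_{k-1}$, the inequalities $|\varpi|<|y_0|<\cdots<|y_{k-1}|<1$ translate into $A_k\cong\{(v_0,\ldots,v_{k-1}): |\varpi|<|v_l|<1\ \forall l,\ |\varpi|<|v_0\cdots v_{k-1}|\}$, a semi-open monomial torus contained in the open couronne $Y_0=\{|\varpi|<|v_l|<1\}$. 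By \ref{proph1etmonom} together with \ref{remkummrev} (and \ref{corosouv} to pass to the semi-open case), the restriction $\het1(Y_0,\mu_N)\iso\het1(A_k,\mu_N)$ is bijective and this group is $L^*/(L^*)^N\times\prod_{l=0}^{k-1}v_l^{\Z}/v_l^{N\Z}$; in particular every $\mu_N$-torseur of $A_k$ is of Kummer type $A_k((\lambda v^\gamma)^{1/N})$.

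Next I would compute $u_{A_k}$ via Lemma \ref{lemdecompunitsimpcirc} applied to $Q=u_1=(-1)^d\prod_{a\in\F^{d+1}\setminus\{0\}}l_a(z)/z_d$, for which $S$ consists of unimodular lifts of the $a\in\F^{d+1}\setminus\{0\}$ (with exponent $1$) together with the vector $(0,\ldots,0,1)$ (with exponent $-N$). This gives, up to a constant and up to $1+\Of^{++}(\H^d_{\breve K,\mathring\sigma})$,
\[ u_{A_k}=z_d^{-N}\prod_{a\in\F^{d+1}\setminus\{0\}}z_{d_{i(a)}}, \]
where the integer $i(a)\in\{0,\ldots,k\}$ depends only on $a$: decomposing $\F^{d+1}=\bigoplus_{j=0}^k\bar N_j$ along the blocks of the simplex ($\dim_\F\bar N_j=e_j$, the reduction of $N_j$), $i(a)$ is the largest $j$ with nonzero $\bar N_j$-component (and $(0,\ldots,0,1)\in \bar N_k$, whence the $z_d^{-N}$). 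A direct count gives
\[ c_j:=\#\{a\in\F^{d+1}\setminus\{0\}: i(a)=j\}=q^{e_0+\cdots+e_j}-q^{e_0+\cdots+e_{j-1}}=q^{d_j+1}-q^{d_{j-1}+1} \]
(with $d_{-1}=-1$, so $\sum_{j=0}^k c_j=N$). Using $\sum_j c_j=N$ to eliminate $z_d$ one gets $u_{A_k}=\prod_{j=0}^{k-1}y_j^{c_j}$, and then, from $y_j=v_j\cdots v_{k-1}$ and the telescoping $\sum_{j=0}^l c_j=q^{d_l+1}-1$,
\[ u_{A_k}\equiv\prod_{l=0}^{k-1}v_l^{\,q^{d_l+1}-1}\pmod{L^*(1+\Of^{++})}. \]

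Finally, since only geometric components are at stake I would pass to $L$ algebraically closed, so that the constant and the principal units become $N$-th powers and $\TC_{A_k}\cong A_k\bigl((\prod_{l=0}^{k-1}v_l^{q^{d_l+1}-1})^{1/N}\bigr)$; as $A_k$ is geometrically connected, the structure of Kummer covers of a monomial torus (cf.\ the remark following \ref{theodrcouronne}) shows that this cover has exactly $\pi_0={\rm PGCD}(N,q^{d_0+1}-1,\ldots,q^{d_{k-1}+1}-1)$ geometric components. Then ${\rm PGCD}(q^a-1,q^b-1)=q^{{\rm PGCD}(a,b)}-1$ gives $\pi_0=q^\mu-1$ with $\mu={\rm PGCD}(d+1,d_0+1,\ldots,d_{k-1}+1)={\rm PGCD}(e_0,e_0+e_1,\ldots,e_0+\cdots+e_k)={\rm PGCD}(e_0,\ldots,e_k)$, and ${\rm PGCD}(e_0,\ldots,e_k)={\rm PGCD}(d+1,e_0,\ldots,e_k)=m$ since $d+1=\sum_i e_i$. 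I expect the main obstacle to be the combinatorial bookkeeping of the second step — correctly reading off that $Q_{A_k}$ carries exponent $\#\{a: i(a)=j\}=q^{d_j+1}-q^{d_{j-1}+1}$ on $z_{d_j}$, and repackaging it in the coordinates $v_l$; the ambient reductions (steps one and three) and the arithmetic are routine.
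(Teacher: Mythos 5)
Votre démonstration est correcte et suit essentiellement la même stratégie que celle de l'article : réécriture de $A_k$ comme tore monômial semi-ouvert via le changement de variables $v_l=y_l/y_{l+1}$, calcul explicite de $u_{A_k}$ comme monôme en ces variables grâce au lemme \ref{lemdecompunitsimpcirc}, puis identification du nombre de composantes géométriques avec le PGCD des exposants et de $N$. Les seules différences sont cosmétiques : vous dérivez la formule $\pi_0={\rm PGCD}(N,\gamma_0,\dots)$ des résultats internes (\ref{proph1etmonom}, \ref{remkummrev}, \ref{corosouv} et la décomposition $\TC=\coprod_\zeta\TC_\zeta$ de \ref{sssectiondrmonom}) là où l'article cite \cite[Proposition 4.1]{J3}, et vos exposants $c_j$ diffèrent des $\beta_i$ de l'article par une convention d'indexation (et une puissance de $q$, inoffensive puisque $q$ est inversible modulo $N$), ce qui conduit au même PGCD $q^m-1$.
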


\begin{proof}
Notons que $A_k$ est un  tore mon\^omial semi-ouvert. En effet, $$A_k=\{y=(y_0,\cdots, y_{k-1})\in \mathring{\B}_{\breve{K}}^k|\, 1>|y_{k-1}|>\cdots>|y_0|>|\varpi|\}$$ et on réalise le changement de variable $x_0=\frac{y_0}{y_1}$, $x_1=\frac{y_1}{y_2}, \cdots, x_{k-1}={y_{k-1}}$ pour obtenir\footnote{Sur le système de coordonnées $[z_0,\dots, z_d]$ de l'espace projectif  ambiant, les variables $(y_i)_i$ et $(x_i)_i$ correspondent   à $y_i=\frac{z_{d_i}}{z_{d}}$ et $x_i=\frac{z_{d_i}}{z_{d_{i+1}}}$.} $$A_k=\{x=(x_0,\cdots, x_{k-1})\in \mathring{\B}_{\breve{K}}^k|\, 1>|x_0\cdots x_{k-1}|>|\varpi|\}.$$ De plus, on a (cf \ref{lemdecompunitsimpcirc}) \[u_{A_k}=\pro{(\frac{z_{d_i}}{z_{d}})^{\beta_i}}{i=0}{k-1}=\pro{y_i^{\beta_i}}{i=0}{k-1}=\pro{x_i^{\beta_i+\cdots +\beta_{k-1}}}{i=0}{k-1}\]
où $\beta_i =|(M_i/\varpi M_0)\backslash (M_{i+1}/\varpi M_0)|=q^{d+1-d_{i+1}}(q^{e_i}-1)$.
 
 Le nombre de composantes connexes géométriques $\pi_0$ de $\TC_{A_{k}}$ est (cf \cite[Proposition 4.1.]{J3}) $$\pi_0={\rm PGCD} (N, \beta_0+\cdots +\beta_{k-1},\cdots,\beta_{k-1})={\rm PGCD} (N, \beta_0,\cdots,\beta_{k-1})=q^m-1.$$
\end{proof}

La partie isotypique en $\theta$ est donnée par : 
\[\hdr{j} (\Sigma^1_{  \mathring{\sigma}})[\theta]= \bigoplus_{j_1+j_2=j} \hdr{j_1}(\TC_{A_{k}}) [ \theta] \widehat{\otimes} \hdr{j_2}(\TC_{C_{\sigma}} )[ \theta]\]

On voit $\theta$ comme un élément de $\Z/N\Z$. Le nombre de composantes connexes géométriques de $\TC_{A_{k}}$ est de la forme $q^m-1$ et par primitivité de $\theta$, $N/(q^m-1)$ ne divise pas $\theta$ dans $\Z/N\Z$. On a alors $\hdr{j}(\TC_{A_{k}}) [ \theta]=0$ d'après  \ref{corosouv}. Ainsi, \[ \hdr{j} (\Sigma^1_{  \mathring{\sigma}})[ \theta ]=0. \]Ce qui conclut la preuve de la proposition \ref{lemdrsimpsigma2}.

\nocite{dr1}
\nocite{dr3}
  
\subsection{Réalisation de la correspondance de Langlands locale\label{sssectionsigdrjl}}

Dans cette partie, nous allons d\'ecrire la cohomologie des espaces $\MC_{Dr}^1$ et montrer qu'elle r\'ealise la correspondance de Jacquet-Langlands. On \'etendra les scalaires \`a $C$ pour tous les espaces consid\'er\'es en fibre g\'en\'erique.  


On pourra simplifier le produit $G \times D^* $ en $GD$. On a  
une "valuation" sur $GD$ :
\[ v_{GD} : (g,b) \in GD \mapsto v_K(\det(g) \nr(b) ) \in \Z. \]
On introduit alors pour  $i=0$ ou $i=d+1$, $[GD]_{i}=v_{GD}^{-1}(i\Z)$ et $[G]_{i}=G\cap[GD]_{i}$, $[D]_{i}=D^*\cap[GD]_{i}$.
 Ainsi, on a des inclusions  de $\OC_D^*$, $G$ dans $[GD]_0$
\[b\in  \OC_D^* \mapsto (Id,b)\in [GD]_0\]
\[g\in G\mapsto (g, \Pi_D^{- \det(g)})\in [GD]_0\]
mais les deux sous-groupes ne commutent pas entre eux. 

Passons aux repr\'esentations qui vont nous int\'eresser. Nous d\'efinissons d'abord des repr\'esentations sur $\gln_{d+1}(\OC_K) \varpi^{\Z}\times \OC_D^* \varpi^{\Z}$ que nous \'etendrons \`a $GD$ par induction. Fixons $\theta$ un caract\`ere primitif de $\F_{q^{d+1}}^*$ et des isomorphismes $\OC_D^*/1+ \Pi_D \OC_D \cong \F_{q^{d+1}}^* $. On pose : 
\begin{itemize}
\item $\theta$ sera vu comme une $[D]_{d+1}$-repr\'esentation via $\OC_D^* \varpi^{\Z} \to \OC_D^* \to \F_{q^{d+1}}^*$, 
\item $\bar{\pi}_{\theta}$ sera la repr\'esentation associ\'ee \`a $\theta$ sur $\gln_{d+1}(\F_q)$ via la correspondance de Deligne-Lusztig. 
On la voit comme une $\gln_{d+1}(\OC_K) \varpi^{\Z}$-repr\'esentation via $\gln_{d+1}(\OC_K) \varpi^{\Z} \to \gln_{d+1}(\F_q)$.
\end{itemize}  
Par induction, on obtient : 
\begin{itemize}
\item une repr\'esentation $\pi(\theta)$ de $G$, o\`u $\pi(\theta)= \cind_{\gln_{d+1}(\OC_K) \varpi^{\Z}}^G \bar{\pi}_{\theta}$. Il pourra \^etre utile de consid\'erer $\tilde{\pi}(\theta)= \cind_{\gln_{d+1}(\OC_K) \varpi^{\Z}}^{[G]_{d+1}} \bar{\pi}_{\theta}$ et d'\'ecrire $\pi(\theta)= \cind_{[G]_{d+1}}^G \tilde{\pi}(\theta)$. 
\item une $D^*$-repr\'esentation $\rho(\theta)= \cind_{[D]_{d+1}}^{D^*} \theta$, 
\end{itemize}
Nous avons d\'efini 
une action de $GD$ sur $\MC_{Dr}^1$ qui s'identifie non canoniquement \`a $\Sigma^1 \times \Z$. Si on confond  $\Sigma^1$ avec $\Sigma^1 \times \{0 \}\subset\MC_{Dr}^1/\varpi^\Z \cong \Sigma^1 \times \Z/(d+1)\Z$, on obtient une action sur $\Sigma^1$ de $[GD]_{d+1}$. 

Pour \'enoncer la correspondance de Langlands, nous aurons besoin de la cohomologie de $\MC_{Dr}^1$. On a la relation : 
\[ \hdrc{i}( \MC_{Dr}^1/ \varpi^{\Z}) = \cind^{GD}_{[GD]_{d+1}}  \hdrc{i}( \Sigma^1). \]


Nous allons montrer : 

\begin{theo}
Soit $\theta$ un caract\`ere primitif, on a un isomorphisme $G$-\'equivariant : 
\[ \homm_{D^*}(\rho(\theta), \hdrc{i}(( \MC_{Dr}^1/ \varpi^{\Z})/C)) \underset{G}{\cong} \begin{cases} \pi(\theta)^{d+1} &\text{ si } i=d, \\ 0 &\text{ sinon.} \end{cases} \]  
\end{theo}

\begin{proof}
Si $i \neq d$, nous avons d\'ej\`a prouv\'e l'annulation de la cohomologie. Posons dor\'enavant $i=d$. 
Dans un premier temps, observons 

\begin{align*}
\homm_{D^*}(\rho(\theta), \hdrc{d}(( \MC_{Dr}^1/ \varpi^{\Z})/C))  & = \homm_{D^*} (\cind_{[D]_{d+1}}^D \theta, \cind_{[GD]_{d+1}}^{GD} \hdrc{d}(\Sigma^1_{C}))&	 \\
& =  \homm_{[D]_{d+1}}(\theta, \cind_{[G]_{d+1}}^{G}\hdrc{d}(\Sigma^1_{C}))& \\
& = \cind_{[G]_{d+1}}^{G} \homm_{\F_{q^{d+1}}^*}(\theta, \hdrc{d}(\Sigma^1_{C}))& \\
& = \cind_{[G]_{d+1}}^{G}  \hdrc{d}(\Sigma^1_{C}))[\theta]& \\
& = \cind_{[G]_{d+1}}^G \pi(\theta) |_{[G]_{d+1}}&( \ref{theodrsigma1}) \\
& = \cind_{[G]_{d+1}}^G {\rm res}_{[G]_{d+1}} ( \cind_{[G]_{d+1}}^G \tilde{\pi}(\theta)) \\
& = \cind_{[G]_{d+1}}^G (\bigoplus_{x \in G/ [G]_{d+1}} c_x(\tilde{\pi}(\theta))) \\
& = \bigoplus_{x \in G/ [G]_{d+1}}\cind_{[G]_{d+1}}^G  c_x(\tilde{\pi}(\theta)) \\
& = \pi(\theta)^{| G / [G]_{d+1} |}= \pi(\theta)^{d+1}.
\end{align*}
On rappelle que $\tilde{\pi}(\theta) = \cind_{\gln_{d+1}(\OC_K) \varpi^{\Z}}^{[G]_{d+1}} \bar{\pi}_{\theta}$. On a not\'e $c_x(\tilde{\pi}(\theta))$ la repr\'esentation $ g  \mapsto \tilde{\pi}(\theta)(x^{-1}gx)$. Pour l'avant-derni\`ere \'egalit\'e, on a utilis\'e la formule de Mackey $\cind_{[G]_{d+1}}^G  c_x(\tilde{\pi}(\theta))=\pi(\theta)$.

\end{proof}

\newpage

\bibliographystyle{alpha}
\bibliography{drsig_v1}

\end{document}